\documentclass[11pt]{amsart}
\usepackage{amssymb,amsmath,amsthm,amsfonts,mathrsfs}
\usepackage[hmargin=3cm,vmargin=3.5cm]{geometry}
\usepackage[dvipsnames,table,xcdraw]{xcolor}
\usepackage[colorlinks = true,
            linkcolor = ForestGreen,
            urlcolor  = Fuchsia,
            citecolor = blue,
            anchorcolor = blue]{hyperref}

\usepackage{stackengine}

\usepackage[all,2cell]{xy} \UseAllTwocells 
\usepackage{tikz-cd}
\usepackage{quiver} 

\usepackage{color}
\usepackage{graphicx,psfrag}
\usepackage{amscd}  
\usepackage{stmaryrd}  
\usepackage[all,2cell]{xy} \UseAllTwocells \SilentMatrices
\usepackage{tikz}
\usepackage[dvips]{epsfig}
\usepackage{comment}
\usepackage{kbordermatrix}
\usepackage{cancel,scalefnt}
\usepackage{stackengine}
\usepackage{soul}

\usepackage{array}
\newcolumntype{L}{>{$}l<{$}} 

\theoremstyle{definition}
\newtheorem{theorem}{Theorem}[section]
\newtheorem{lemma}[theorem]{Lemma}
\newtheorem{remark}[theorem]{Remark}

\newtheorem{proposition}[theorem]{Proposition}

\newtheorem{corollary}[theorem]{Corollary}
\newtheorem{example}[theorem]{Example}

\catcode`~=11 
\newcommand{\urltilde}{\kern -.15em\lower .7ex\hbox{~}\kern .04em}  
\catcode`~=13 

\usetikzlibrary{arrows,automata}
\usetikzlibrary{decorations.markings}
\usetikzlibrary{decorations.pathmorphing,shapes,decorations.text,shapes.geometric}
\usepgflibrary {shadings}
\usepackage{multirow}

\newcounter{sarrow}

\usepackage[colorinlistoftodos]{todonotes}

\definecolor{darkred}{rgb}{0.7,0,0} 
\newcommand{\defn}[1]{{\color{darkred}\emph{#1}}} 

\title[Multisymmetric functions on eventually constant cyclic graphs]{Multisymmetric functions on eventually constant cyclic graphs}

\author[Radford Green]{Radford Green}
\address{Department of Mathematics, Johns Hopkins University, Baltimore, MD 21218, USA}
\email{\href{mailto:rgreen87@jh.edu}{rgreen87@jh.edu}}

\author[Cornell Holmes]{Cornell Holmes}
\address{Department of Mathematics, Johns Hopkins University, Baltimore, MD 21218, USA}
\email{\href{mailto:cholme21@jhu.edu}{cholme21@jhu.edu}}

\author[Mee Seong Im]{Mee Seong Im}
\address{Department of Mathematics, Johns Hopkins University, Baltimore, MD 21218, USA}
\email{\href{mailto:meeseong@jhu.edu}{meeseong@jhu.edu}}

\makeatletter
\@namedef{subjclassname@2020}{%
  \textup{2020} Mathematics Subject Classification}

\subjclass[2020]{Primary: 05C20, 05C62, 05E05, 05C31;
Secondary: 05C10, 22E27.}

\providecommand{\keywords}[1]{\textbf{\textit{Key words and phrases.}} #1}

\keywords{Multisymmetric polynomials, virtual representations, eventually constant functions, eventually cyclic functions.}

\date{August 7, 2026}
\begin{document}

\def\mfb{\mathfrak{b}}

\def\wt{\mathsf{wt}}

 \def\A{\mathbb{A}}

\def\E{\mathsf E}
\def\F{\mathbb{F}}
\def\I{\mathsf{I}}
\def\R{\mathbb R}
\def\Q{\mathbb Q}
\def\Z{\mathbb Z}
\def\N{\mathbb N}
\def\C{\mathbb C}
\def\S{\mathbb S}
\def\Kr{\mathsf{Kr}}
\def\sgn{\mathsf{sgn}}
\def\Sym{\mathsf{Sym}}
\def\Leinster{\mathsf{Leinster}}
\def\Lin{\mathsf{Lin}}
\def\Nil{\mathsf{Nil}}
\def\lrarrow{\mathrel{\stackanchor[0.10pt]{$\leftarrow$}{$\rightarrow$}}}

\def\diag{\mathsf{diag}}
\def\EC{\mathsf{EC}}

\def\lab{\mathsf{label}}

\def\indeg{\mathsf{indeg}}
\def\outdeg{\mathsf{outdeg}}

\def\orig{\mathsf{orig}}
\def\forest{\mathsf{forest}}

\def\SS{\mathbb S}
\def\GL{\mathsf{GL}}
\def\Graph{\mathsf{Graph}}
\def\top{\mathsf{top}}

\def\for{\mathsf{for}}
\def\Hom{\mathsf{Hom}}
\def\End{\mathsf{End}}

\def\Der{\mathsf{Der}}
\def\Pol{\mathsf{Pol}}

\newcommand{\dmod}{\mathsf{-mod}}
\newcommand{\comp}{\mathrm{comp}} 
\newcommand{\col}{\mathrm{col}}
\newcommand{\adm}{\mathrm{adm}}  
\newcommand{\Ob}{\mathrm{Ob}}
\newcommand{\Cob}{\mathsf{Cob}}
\newcommand{\UCob}{\mathsf{UCob}}
\newcommand{\COB}{\mathcal{COB}}
\newcommand{\ECob}{\mathsf{ECob}}
\newcommand{\id}{\mathsf{id}}
\newcommand{\undM}{\underline{M}}
\newcommand{\im}{\mathsf{im}}
\newcommand{\coker}{\mathsf{coker}}
\newcommand{\Aut}{\mathsf{Aut}}
\newcommand{\tripod}{\mathsf{Td}}
\newcommand{\BBC}{\mathbb{B}(\mathcal{C})}
\newcommand{\Pmod}{\mathrm{pmod}}
\newcommand{\gammaoneR}{\gamma_{1,R}}  
\newcommand{\gammaoneRbar}{\overline{\gamma}_{1,R}} 
\newcommand{\gammaoneRprime}
{\gamma'_{1,R}}
\newcommand{\gammaoneRbarprime}
{\overline{\gamma}'_{1,R}}
\newcommand{\qbinom}[3]{\genfrac{[}{]}{0pt}{}{#1}{#2}_{#3}}

\def\l{\lbrace}
\def\r{\rbrace}
\def\o{\otimes}
\def\lra{\longrightarrow}
\def\ed{\mathsf{ed}}
\def\Ext{\mathsf{Ext}}
\def\ker{\mathsf{ker}}
\def\mf{\mathfrak} 
\def\mcC{\mathcal{C}}
\def\mcS{\mathcal{S}}  
\def\mcQC{\mathcal{QC}}
\def\mcA{\mathcal{A}}
\def\mcF{\mathcal{F}}
\def\mcE{\mathcal{E}}
\def\Fr{\mathsf{Fr}}  

\def\bbn{\mathbb{B}^n}
\def\ovb{\overline{b}}
\def\tr{{\sf tr}} 
\def\det{{\sf det }} 
\def\one{\mathbf{1}}   
\def\kk{\mathbf{k}}  
\def\gdim{\mathsf{gdim}}  
\def\rk{\mathsf{rk}}
\def\IET{\mathsf{IET}}
\def\SAF{\mathsf{SAF}}

\newcommand{\indexw}{\R_{>0}} 

\newcommand{\brak}[1]{\ensuremath{\left\langle #1\right\rangle}}
\newcommand{\oplusop}[1]{{\mathop{\oplus}\limits_{#1}}}
\newcommand{\addfigure}{\vspace{0.1in} \begin{center} {\color{red} ADD FIGURE} \end{center} \vspace{0.1in} }
\newcommand{\add}[1]{\vspace{0.1in} \begin{center} {\color{red} ADD FIGURE #1} \end{center} \vspace{0.1in} }
\newcommand{\vspin}{\vspace{0.1in} }

\newcommand\circled[1]{\tikz[baseline=(char.base)]{\node[shape=circle,draw,inner sep=1pt] (char) {${#1}$};}} 

\let\oldemptyset\emptyset
\let\emptyset\varnothing

\let\oldtocsection=\tocsection
\let\oldtocsubsection=\tocsubsection
\renewcommand{\tocsection}[2]{\hspace{0em}\oldtocsection{#1}{#2}}
\renewcommand{\tocsubsection}[2]{\hspace{1em}\oldtocsubsection{#1}{#2}}

\renewcommand{\kbldelim}{(}
\renewcommand{\kbrdelim}{)}

\def\MK#1{{\color{red}[MK: #1]}}
\def\bfred#1{{\color{red}#1}}


\begin{abstract}
The study of spanning trees and related structures is central in graph theory, closely connected to  understanding functions between finite sets. This paper generalizes the established relationship between rooted trees and eventually constant endomorphisms to a wider context including $k$-tuples of functions among $k$ disjoint vertex sets. We derive a weighted count of eventually constant $k$-tuples, which are characterized by their stabilization to constancy upon iterated composition. This construction is the set-theoretic analogue of the nilpotent cone and offers new insight into the combinatorial structure of cyclic digraphs. By identifying these $k$-tuples with their induced digraphs, we construct explicit formulas for their generating polynomials and analyze the cardinality of the set of eventually constant $k$-tuples. These polynomials are multisymmetric in $k$ sets of variables and can be re-expressed as the character of a representation of the product of general linear groups. We extend the ideas to the more general structures of eventually $N$-cyclic and $\lambda$-cyclic $k$-tuples, which we define and provide similar theorems for their generating functions and cardinality.
\end{abstract}

\maketitle
\tableofcontents

%
%

\section{Introduction}
\label{section_intro}

The enumeration of spanning trees is fundamental in graph theory, and matrix-tree theorems~\cite{CK78} yield weighted enumerations of spanning trees as suitable determinants. For example, the number of labeled spanning trees of the complete bipartite graph on $m$ and $n$ vertices is $m^{n-1}n^{m-1}$~\cite{FS58_eng}. Various reductions of matrix-tree calculations to simpler graphs were later developed by~\cite{PakPostnikov94}.

We can understand endomorphisms through the study and enumeration of trees and permutations. In the simplest case, highlighted in~\cite{Lei21}, the rooted trees of an $n$-element set $[n]$ correspond to the \defn{eventually constant} endomorphisms, maps $f:[n]\to [n]$ whose iterated compositions $f^m=f\circ\cdots\circ f$ eventually stabilize to a constant. Cayley's formula, a classical result that has been extensively generalized by counting edges with corresponding formal variables~\cite{Cayley09, Hai10, Sta24, Die25, Wes96, Bol98}, states that there are $n^{n-1}$ rooted trees. Thus, there are $n^{n-1}$ eventually constant endomorphisms, and an endomorphism is eventually constant with probability
\begin{equation}\label{k=1_EC_prob_intro}
    \frac{n^{n-1}}{n^n}=\frac{1}{n}.
\end{equation}
More broadly any endomorphism $f:[n]\to [n]$ decomposes into a permutation and a forest~\cite{FS09}; iterated images $f^m([n])\subseteq[n]$ stabilize to a subset $U\subseteq [n]$ on which $f$ acts as a permutation. On the remaining vertices, iterating $f$ leads to the repeated vertices in $U$, so the action of $f$ on $[n]\setminus U$ induces an in-forest rooted at $U$.

The relation between spanning trees and eventually constant maps has been explored from various perspectives by~\cite{Lei21, FH58, GKP89, Sta24}. The authors in~\cite{CIKLR25, CILR25} generalized this framework from a single map on one set to pairs of linear maps between two sets. 
In this paper, we extend the notion of eventual constancy to $k$-tuples of maps among $k$ distinct sets and derive a weighted enumeration of this class. Then, we progressively expand this class and analyze each generalization in the same manner. These constructions act as a set-theoretic analogue of the nilpotent cone, studied in combinatorics, geometry, topology, and geometric representation theory. We describe the construction precisely as follows.

Fix a positive integer $k\in\mathbb N$ and $k$ sets $X_1, \ldots, X_k$ of cardinality $n_1, \,\ldots, \,n_k$, respectively. Then, a $k$-tuple of functions $(f_1,\ldots,f_k)$ with the following chain
\[
X_1 \xrightarrow{f_1}X_2\xrightarrow{f_2}\ldots \xrightarrow{f_{k-2}}X_{k-1}\xrightarrow{f_{k-1}}X_k\xrightarrow{f_k}X_1
\]
of maps is \defn{eventually constant} if the composition $f_k \cdots f_1: X_1 \to X_1$ eventually stabilizes to a constant map, or equivalently, if $f_k \cdots f_1$ is eventually constant in the traditional sense.

We assign each $k$-tuple $f:=(f_1,\ldots,f_k)$ the formal variable product of its images, its polynomial \defn{weight}
\begin{equation}
\label{k-tuple weight definition}
   w(f) :=\prod_{i=1}^k \prod_{v\in X_i} x_{f_i(v)},
\end{equation}
and denote the \defn{$j^{th}$ elementary symmetric polynomial} in vertices $v\in U$ by 
\begin{equation}\label{Elementary symmetric polynomials}
    e_j(U):=\sum_{\substack{S\subseteq U\\ |S|=j}}\prod_{v\in S} x_{v}.
\end{equation}
Let $\EC$ denote the set of eventually constant $k$-tuples, and define its generating function, $P(\mathbf x) :=\sum_{f\in\EC} w(f)$.

We derive a multisymmetric weighted enumeration in Theorem~\ref{EC weight theorem}:
\begin{theorem}
\label{Theorem statement intro}
The weighted enumeration of all eventually constant $k$-tuples on $X_1, \ldots, X_k$ is given by
\begin{equation} \label{eq_P_product}
P(\mathbf x) 
= 
\prod_{i=1}^ke_1(X_i)^{n_{i-1}-2} 
\left(
\prod_{j=1}^ke_1(X_j)^{2} - 2^k\prod_{j=1}^ke_2(X_j)
\right),
\end{equation}
where the indices are understood modulo $k$, so that $n_{1-1}=n_k$. 
\end{theorem}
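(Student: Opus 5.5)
The plan is to turn eventual constancy into a condition on a functional digraph, reduce $P(\mathbf x)$ to a weighted count of rooted forests, and then do the algebra. Let $\Gamma_f$ be the functional digraph on $X=X_1\sqcup\cdots\sqcup X_k$ with an edge $v\to f_i(v)$ for each $v\in X_i$; every edge goes from $X_i$ to $X_{i+1}$, indices mod $k$. The edge out of $v\in X_i$ contributes $x_{f_i(v)}$, so $w(f)$ is the product of the variables of the edge heads.

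\textbf{Step 1: the cycle condition.} Every cycle of $\Gamma_f$ has length divisible by $k$. Cycles of length $km$ correspond bijectively to $m$-cycles of $g=f_k\cdots f_1$ on $X_1$. The map $g$ is eventually constant exactly when its functional graph has a single cycle, which is a fixed point. So $f\in\EC$ if and only if $\Gamma_f$ is connected and its unique cycle has length exactly $k$, passing through one vertex $c_i\in X_i$ for each $i$.

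\textbf{Step 2: reduction to forests.} Deleting the $k$ cycle edges leaves an in-forest rooted at $R=\{c_1,\dots,c_k\}$. In this forest each non-root $v\in X_{i-1}$ points into $X_i$. The deleted cycle edges have total weight $\prod_i x_{c_i}$. Conversely, any choice of $c_1,\dots,c_k$ together with such a forest gives an element of $\EC$. Hence
\[
P(\mathbf x)=\sum_{(c_1,\dots,c_k)}\Big(\prod_i x_{c_i}\Big)\,F_R(\mathbf x),
\]
where $F_R$ is the weighted count of these cyclically $k$-partite forests with root set $R$.

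\textbf{Step 3: a forest lemma.} This is the main obstacle. For an arbitrary root set $R$ with $R_i=R\cap X_i$, $r_i=|R_i|$ and $x(R_i)=\sum_{v\in R_i}x_v$, I would prove
\[
F_R=\prod_{i=1}^k e_1(X_i)^{\,n_{i-1}-r_{i-1}-1}\Big(\prod_{i=1}^k e_1(X_i)-\prod_{i=1}^k\big(e_1(X_i)-x(R_i)\big)\Big).
\]
For $k=1$ this reduces to the classical $x(R)\,e_1^{\,n-r-1}$.

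My first attempt would be the all-minors matrix-tree theorem. The weighted Laplacian here has a block-cyclic structure: the out-degree block on $X_{i-1}$ is the scalar $e_1(X_i)$, and the off-diagonal block from $X_{i-1}$ to $X_i$ is $-\mathbf 1\,x_{X_i}^{T}$, which has rank one. Deleting the rows and columns indexed by $R$ leaves a scalar diagonal plus a rank-one block-cyclic perturbation. Its determinant can then be computed by the matrix determinant lemma, or by reducing to a $k\times k$ cyclic matrix. That computation should yield exactly the difference of products above.

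As a fallback, I would induct on $|X\setminus R|$ via a Prüfer-style argument: pick a non-root vertex $v\in X_{i-1}$, sum over its target in $X_i$, and contract it into the root set.

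\textbf{Step 4: algebra.} Set $r_i=1$ and $R_i=\{c_i\}$, and substitute into Step 2. The prefactor becomes $\prod_i e_1(X_i)^{n_{i-1}-2}$. Summing $\prod_i x_{c_i}\prod_i e_1(X_i)$ over all choices of the $c_i$ gives $\prod_i e_1(X_i)^2$. The sum $\sum_{c}\prod_i x_{c_i}\big(e_1(X_i)-x_{c_i}\big)$ factors as a product over $i$. Each factor is
\[
\sum_{c\in X_i}x_c\big(e_1(X_i)-x_c\big)=e_1(X_i)^2-p_2(X_i)=2e_2(X_i),
\]
where $p_2(X_i)=\sum_{c\in X_i}x_c^2$. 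Combining these gives \eqref{eq_P_product}.
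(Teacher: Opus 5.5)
Your proposal is correct and takes essentially the same route as the paper. You partition $\EC$ by the choice set carrying the unique $k$-cycle, split each digraph into that cycle and an in-forest rooted at it, compute the forest weight with the directed matrix-tree theorem applied to the block-cyclic reduced Laplacian (scalar diagonal blocks plus rank-one blocks $-\mathbf 1\widehat{\mathbf x}^T$) via the matrix determinant lemma, and finish with $\sum_{c\in X_i}x_c\,e_1(X_i\setminus\{c\})=2e_2(X_i)$. Your Step~3 for arbitrary root sets is exactly what the paper's determinant computation in Lemma~\ref{Laplacian calculation} yields (the paper states it only for $N$-choice sets, but the proof never uses equal block sizes), and that determinant evaluation, which you leave as a sketch, goes through as you anticipate.
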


Throughout this paper, indices are understood modulo $k$, so that $n_{1-1}=n_k$. We may re-express \eqref{eq_P_product} as a virtual character of representations of $\prod_{i=1}^k \GL(n_i)$: 
\begin{equation}\label{eq_P_char}
P(\mathbf x)=
\chi\!\left(
\bigotimes_{i=1}^k V_i^{\otimes n_{i-1}}
-
2^k
\bigotimes_{i=1}^k
\left(
V_i^{\otimes (n_{i-1}-2)}\otimes \Lambda^2V_i
\right)
\right),
\end{equation}
where $V_i\cong \mathbb{C}^{n_i}$ denotes the fundamental representation
of $\GL(n_i)$; see Corollary~\ref{rep_theory_formulation}.

To derive the weighted enumeration \eqref{eq_P_product}, we identify each $k$-tuple with its directed graph.
Each digraph decomposes into a cyclic component and a forest directed towards its cyclic component. We partition $\EC$ by each cyclic vertex set and construct a weight-preserving bijection between each equivalence class and its corresponding pairs of cyclic and forest digraphs. The cyclic components are simple to weigh, since eventually constant $k$-tuples collapse to a single cycle on each fixed vertex set. Then, we apply the directed matrix-tree theorem~\cite{CK78} to weigh the forests. Finally, the uniformity of the distribution of cyclic vertex sets, cycle weights, and forest weights allows us to reduce the resulting sum over class weights into the elementary symmetric form \eqref{eq_P_product}.

Setting the algebraic weights $x_u$ to $1$ in \eqref{eq_P_product}, we recover the cardinality of the eventually constant $k$-tuples in Proposition \ref{EC Cardinality}:
\begin{proposition}
\label{Proposition statement intro}
The cardinality of eventually constant $k$-tuples on $X_1, \ldots, X_k$ is
\begin{equation} \label{EC_cardinality_intro}
\#\EC 
= \prod_{i=1}^kn_i^{n_{i-1}-1}\cdot
\left(
\prod_{j=1}^kn_j - \prod_{j=1}^k(n_j-1)
\right).
\end{equation}
\end{proposition}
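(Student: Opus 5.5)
The plan is to specialize Theorem~\ref{Theorem statement intro} by setting every weight $x_u$ equal to $1$. With this substitution each weight $w(f)$ in \eqref{k-tuple weight definition} becomes $1$, so $P(\mathbf 1)=\#\EC$. It then only remains to evaluate the right-hand side of \eqref{eq_P_product}.

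Under $x_u=1$ the elementary symmetric polynomials become binomial coefficients. We get $e_1(X_i)=n_i$ and $e_2(X_i)=\binom{n_i}{2}=n_i(n_i-1)/2$. The factor $2^k\prod_j e_2(X_j)$ therefore becomes
\[
\prod_{j=1}^k n_j(n_j-1),
\]
since the $2^k$ exactly cancels the denominators. The bracket in \eqref{eq_P_product} then reads
\[
\prod_{j=1}^k n_j^2-\prod_{j=1}^k n_j(n_j-1)=\prod_{j=1}^k n_j\left(\prod_{j=1}^k n_j-\prod_{j=1}^k(n_j-1)\right).
\]
Combining the extracted $\prod_j n_j$ with the prefactor $\prod_i n_i^{n_{i-1}-2}$ gives $\prod_i n_i^{n_{i-1}-1}$. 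This is exactly \eqref{EC_cardinality_intro}.

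The only point needing care is degenerate sizes, e.g.\ $n_i=1$ or a negative exponent $n_{i-1}-2$. In those cases the formula should be read as the polynomial identity already established in Theorem~\ref{Theorem statement intro}, where the bracket is divisible by $\prod_j e_1(X_j)$. One should therefore first factor $\prod_j e_1(X_j)$ out of the bracket at the polynomial level, which is legitimate because $e_1^2-2e_2$ structure makes the total a genuine polynomial, and only then specialize. This avoids any issue with the exponents $n_{i-1}-2$ being negative, and we do not anticipate any other obstacle.
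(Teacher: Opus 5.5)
Your computation is correct and is exactly the paper's argument. The paper proves this proposition by evaluating Theorem~\ref{EC weight theorem} at $x_v=1$, as you do. Your algebra is right: $2^k\prod_j\binom{n_j}{2}=\prod_j n_j(n_j-1)$, and pulling $\prod_j n_j$ out of the bracket gives the stated formula.

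The one thing to fix is your last paragraph. Nothing in it is needed, and its justification is wrong. The bracket $\prod_j e_1(X_j)^2-2^k\prod_j e_2(X_j)$ is \emph{not} divisible by $\prod_j e_1(X_j)$ as a polynomial in general:
\begin{itemize}
\item for $k=1$ it is the power sum $\sum_{u\in X_1}x_u^2$;
\item for $k=2$, $X_1=\{a,b\}$, $X_2=\{c,d\}$, it is $(a+b)^2(c+d)^2-4abcd$, which does not vanish at $a=-b$.
\end{itemize}
Nor does the polynomiality of $P(\mathbf x)$ give divisibility by that whole product.

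What is true is this. A negative exponent $n_{i-1}-2$ occurs only when $n_{i-1}=1$. Then $e_2(X_{i-1})=0$, so the bracket collapses to $\prod_j e_1(X_j)^2$. This is consistent with the paper's remark that every tuple is then eventually constant. More simply, the theorem is an identity of rational functions whose only denominators are powers of the $e_1(X_i)$. These evaluate to $n_i\neq 0$ at $\mathbf x=\mathbf 1$, so you may substitute directly, with no preliminary factoring.
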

This generalizes~\cite[Theorem 2.3]{CIKLR25}.

We generalize \eqref{k=1_EC_prob_intro}, the probability that a single endomorphism is eventually constant. Using \eqref{EC_cardinality_intro}, we find the probability $p_k(n)$ that a $k$-tuple of functions between $k$ equal sets $[n],\ldots,[n]$ is eventually constant in Corollary~\ref{Uniform EC asymptotic}:
\begin{equation}\label{Uniform EC asymptotic intro}
p_k(n)= \frac{k}{n} +\mathcal O\left(\frac{1}{n^2}\right).
\end{equation}

Next, we fix a positive integer $N\in\mathbb N$ and generalize from constant maps to cycles. A $k$-tuple $(f_1,\ldots,f_k)$ is \defn{eventually $N$-cyclic} if the composition $f_k\cdots f_1:X_1\to X_1$ eventually stabilizes to an $N$-cycle on $X_1$. Therefore, an eventually constant $k$-tuple is eventually $1$-cyclic. Let $\EC(N)$ denote the set of eventually $N$-cyclic $k$-tuples, and define the corresponding generating function, $P_N(\mathbf x):=\sum_{f\in\EC(N)}w(f)$.

To compute $P_N(\mathbf x)$, we extend the cyclic and forest decomposition and partition techniques used in the previous weighted enumeration. Since the $N$-cyclic vertex sets grow as $N$ does, they allow for distinct cycle structures. After calculating the weight of the admissible cycles, we again apply the directed matrix-tree theorem, and then reduce the resulting sum using the uniformity of the $N$-cyclic vertex sets.

This yields the multisymmetric weighted enumeration in Theorem~\ref{Weight of eventually cyclic functions}:
 \begin{theorem}
\label{thm_wt_eventally_cyclic_fns}
The weighted enumeration of all eventually $N$-cyclic $k$-tuples is given by 
\begin{equation}\label{eqn_wt_eventally_cyclic_fns}
    P_N(\mathbf{x}) = \frac{(N!)^{k}}{N} 
    \prod_{i=1}^k e_1(X_i)^{n_{i-1}-(N+1)}
    \left(
    \prod_{j=1}^k e_N(X_j)e_1(X_j) -(N+1)^k\prod_{j=1}^k e_{N+1}(X_j)
    \right).
\end{equation}
\end{theorem}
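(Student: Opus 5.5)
We encode a $k$-tuple $f=(f_1,\ldots,f_k)$ as the functional digraph $\Gamma_f$ on $X_1\sqcup\cdots\sqcup X_k$, where every $v\in X_i$ has the single out-edge $v\to f_i(v)\in X_{i+1}$. The plan is to decompose $\Gamma_f$ into its cycle part and a forest, weigh the two parts separately, and then sum over the possible cyclic vertex sets, following the proof of Theorem~\ref{EC weight theorem}.

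\textbf{Step 1 (structure and partition).} Since $f_k\cdots f_1$ eventually acts on its stable image $U_1\subseteq X_1$ as a permutation, $f$ is eventually $N$-cyclic exactly when this permutation is a single $N$-cycle. Set $U_{i+1}=f_i(U_i)$. Each $f_i$ restricts to a bijection $U_i\to U_{i+1}$, so $|U_i|=N$ for all $i$. The cyclic part of $\Gamma_f$ is therefore one directed cycle of length $kN$ that meets each $X_i$ in $U_i$. The remaining vertices form an in-forest rooted at $U=\bigsqcup_i U_i$. We partition $\EC(N)$ by the tuple $(U_1,\ldots,U_k)$ of $N$-subsets. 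As in the constant case, the map $f\mapsto(\text{cycle},\text{forest})$ is a weight-preserving bijection onto pairs consisting of
\begin{itemize}
\item[(a)] an admissible cycle on $U$, and
\item[(b)] a spanning in-forest on $\bigsqcup_i X_i$ rooted at $U$, with every edge going from $X_i$ to $X_{i+1}$.
\end{itemize}

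\textbf{Step 2 (cycles).} An admissible cycle is a choice of bijections $U_1\to U_2\to\cdots\to U_k\to U_1$ whose composite is an $N$-cycle on $U_1$. There are $(N!)^{k-1}(N-1)!=(N!)^k/N$ such choices. Each vertex of each $U_i$ is hit exactly once, so every admissible cycle has the same weight $\prod_i\prod_{v\in U_i}x_v$.

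\textbf{Step 3 (forests; the main obstacle).} We apply the directed matrix-tree theorem~\cite{CK78} with the vertices of $U$ deleted as roots. Write $m_i=n_i-N$ for the number of non-root vertices in $X_i$. The reduced Laplacian is $L=D-A$, where $D$ restricted to the non-roots of $X_i$ is the scalar $e_1(X_{i+1})$. The matrix $A$ is block-cyclic, and its block from $X_i$ to $X_{i+1}$ is $\mathbf 1\,y_{i+1}^{T}$, where $y_{i+1}$ records the variables $x_v$ for $v\in X_{i+1}\setminus U_{i+1}$. Hence
\[
\det L=\prod_i e_1(X_{i+1})^{m_i}\,\det(I-D^{-1}A).
\]
The matrix $D^{-1}A$ is a block-cyclic product of rank-one maps. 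I expect its only contribution to the determinant to be $\det(I-D^{-1}A)=1-\prod_i s_i$ with $s_i=e_1(X_i\setminus U_i)/e_1(X_i)$. I would verify this either by computing the characteristic polynomial of the $k\times k$ cyclic matrix of the scalars $s_i$, or by a direct cycle-counting argument. This gives the forest weight
\[
\prod_i e_1(X_i)^{m_{i-1}}-\prod_i e_1(X_i)^{m_{i-1}-1}\,e_1(X_i\setminus U_i).
\]
Getting the exponents and indices (mod $k$) exactly right here is the delicate point.

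\textbf{Step 4 (summing over $U$).} We multiply by the cycle count and cycle weight from Step 2 and sum over all tuples $(U_1,\ldots,U_k)$. The sum factors over $i$. We use
\[
\sum_{|U_i|=N}\prod_{v\in U_i}x_v=e_N(X_i),\qquad
\sum_{|U_i|=N}\Bigl(\prod_{v\in U_i}x_v\Bigr)e_1(X_i\setminus U_i)=(N+1)\,e_{N+1}(X_i),
\]
where the second identity holds because each $(N+1)$-subset arises from $N+1$ choices of the distinguished extra element. Finally, we pull out the common factor $\prod_i e_1(X_i)^{n_{i-1}-N-1}$, using $m_{i-1}=n_{i-1}-N$. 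This yields \eqref{eqn_wt_eventally_cyclic_fns}.
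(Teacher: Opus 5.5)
Your proposal is correct and matches the paper's proof essentially step for step: partition $\EC(N)$ by $N$-choice sets $U$, biject each class with pairs (single cycle on $U$, in-forest rooted at $U$), count $(N!)^k/N$ cycles each of weight $\prod_{u\in U}x_u$, evaluate $w(\mathcal F_U)$ by the directed matrix-tree theorem, and sum over $U$ using $\sum_{|U_i|=N}e_1(X_i\setminus U_i)\prod_{u\in U_i}x_u=(N+1)e_{N+1}(X_i)$. The one step you leave as an expectation, $\det(I-D^{-1}A)=1-\prod_i s_i$, is settled in the paper (Lemma~\ref{Laplacian calculation}) along the lines you suggest: write $A=\diag(\mathbf 1_{m_1},\ldots,\mathbf 1_{m_k})\,C\,\diag(y_1,\ldots,y_k)^T$ with $C$ the $k\times k$ cyclic permutation matrix, use the matrix determinant lemma to reduce to a $k\times k$ determinant, and note that only the identity and the $k$-cycle contribute to its Leibniz expansion.
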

As with the polynomial $P(\mathbf x)$, we may re-express $P_N(\mathbf x)$ via the characters of representations of $\prod_{i=1}^k\GL(n_i)$ as 
\[
\frac{(N!)^{k}}{N}
\;
\chi\!\left(
\bigotimes_{i=1}^k
\left(
V_i^{\otimes (n_{i-1}-N)} \otimes \Lambda^N V_i
\right)
-
(N+1)^k
\bigotimes_{i=1}^k
\left(
V_i^{\otimes (n_{i-1}-(N+1))} \otimes \Lambda^{N+1} V_i
\right)
\right),
\]
where $V_i \cong \mathbb{C}^{n_i}$ is the fundamental representation of $\GL(n_i)$.

We use \eqref{eqn_wt_eventally_cyclic_fns} to derive the cardinality of the eventually $N$-cyclic $k$-tuples in Proposition \ref{prop_card_EC(N)}:
\begin{proposition}
The cardinality of eventually $N$-cyclic $k$-tuples on $X_1,\dots,X_k$ is
\begin{equation}\label{card_ECN_intro}
\#\EC(N)
=
\frac{1}{N}
\prod_{i=1}^k n_i^{\,n_{i-1}-(N+1)}\frac{n_i!}{(n_i-N)!}
\left(
\prod_{j=1}^k n_j - \prod_{j=1}^k (n_j - N)
\right).
\end{equation}
\end{proposition}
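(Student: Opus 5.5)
We specialize Theorem~\ref{thm_wt_eventally_cyclic_fns} by setting every variable $x_u$ equal to $1$. Under this specialization the weight $w(f)$ of each $k$-tuple becomes $1$, so $P_N(\mathbf 1)=\#\EC(N)$. It then suffices to evaluate the right-hand side of \eqref{eqn_wt_eventally_cyclic_fns} at $\mathbf x=\mathbf 1$ and rewrite it in the form \eqref{card_ECN_intro}.

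At $\mathbf x=\mathbf 1$ the elementary symmetric polynomials become binomial coefficients:
\[
e_1(X_i)=n_i,\qquad e_N(X_j)=\binom{n_j}{N},\qquad e_{N+1}(X_j)=\binom{n_j}{N+1}.
\]
The right-hand side therefore becomes
\[
\frac{(N!)^k}{N}\prod_{i=1}^k n_i^{\,n_{i-1}-(N+1)}\left(\prod_{j=1}^k \binom{n_j}{N}n_j-(N+1)^k\prod_{j=1}^k\binom{n_j}{N+1}\right).
\]
Next we use the identity
\[
(N+1)\binom{n_j}{N+1}=\binom{n_j}{N}(n_j-N).
\]
With it, the bracket factors as
\[
\prod_{j=1}^k\binom{n_j}{N}\left(\prod_{j=1}^k n_j-\prod_{j=1}^k(n_j-N)\right).
\]

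Finally, we absorb the factor $(N!)^k$ into the product of binomial coefficients:
\[
(N!)^k\prod_{j=1}^k\binom{n_j}{N}=\prod_{j=1}^k \frac{n_j!}{(n_j-N)!}.
\]
Combining this product with the power of $n_i$, indexed over the same $i$, yields exactly \eqref{card_ECN_intro}.

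We should also check the degenerate cases $n_j<N$ and $n_j=N$. If $n_j<N$ for some $j$, both sides vanish, since $\binom{n_j}{N}=0$ and $1/(n_j-N)!$ is interpreted as $0$. If $n_j=N$, the term $\binom{n_j}{N+1}$ vanishes, consistently with the identity above. Formally negative exponents on $n_i$ cause no problem, because they already appear in the theorem and cancel there as polynomial identities.

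This is a routine specialization, so the only step needing care is the binomial identity that lets the second product factor out of the bracket.
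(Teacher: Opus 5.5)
Your proposal is correct and is essentially the paper's argument: the paper also obtains $\#\EC(N)$ by evaluating the formula for $P_N(\mathbf x)$ at $x_v=1$. Your identities $(N+1)\binom{n_j}{N+1}=(n_j-N)\binom{n_j}{N}$ and $(N!)^k\prod_j\binom{n_j}{N}=\prod_j\frac{n_j!}{(n_j-N)!}$ supply the routine algebra the paper leaves implicit, and your check of the degenerate cases $n_j\le N$ is a harmless extra.
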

Surprisingly, the probability $p_{k,N}(n)$ that a $k$-tuple of functions between $k$ equal sets $[n],\ldots,[n]$ is eventually $N$-cyclic for fixed $N$ shares the asymptotic expansion with $p_k(n)$ in \eqref{Uniform EC asymptotic intro}. In Corollary \ref{Uniform ECN asymptotic}, 
we find
\begin{equation}\label{limit_ECN_prob_intro}
    p_{k,N}(n) = \frac{k}{n} + \mathcal O\left(\frac{1}{n^2}\right).
\end{equation}
However, the cycle size parameter $N$ does affect the limit if we scale both $n$ and $k:=cn$ for a fixed positive constant $c\in\mathbb N$. In Corollary \ref{Proportional ECN asymptotic}, we derive the limit of the probability $p_{cn,N}(n)$ that a $cn$-tuple is eventually $N$-cyclic as 
\begin{equation}\label{limit_p_cnN(n)_limit_intro}
    \lim_{n \to \infty} p_{cn,N}(n) = \frac{1}{N} e^{-c \binom{N}{2}} \left( 1 - e^{-cN} \right). 
\end{equation}

We extend the concept once more from $N$-cycles to cycle partitions. Fix a positive integer $N\in\mathbb N$ and an integer partition $\lambda\vdash N$. For each number $l\leq N$, let $m_l$ denote the multiplicity of $l$ in $\lambda$. A $k$-tuple $(f_1,\ldots,f_k)$ is \defn{eventually $\lambda$-cyclic} if the composition $f_k\cdots f_1:X_1\to X_1$ eventually stabilizes to a permutation of \defn{cycle type $\lambda$} on a subset of cardinality $N$. Let $\EC(\lambda)$ be the set of eventually $\lambda$-cyclic $k$-tuples, and define the generating function, $P_\lambda(\mathbf x):=\sum_{f\in\EC(\lambda)}w(f)=w(\EC(\lambda))$.

To compute $P_\lambda(\mathbf x)$, we again partition $\EC(\lambda)$ by cyclic vertex sets and weigh each class by its cyclic and forest components. The cyclic vertex sets are as in the eventually $N$-cyclic case, so the weight of each cyclic component is preserved, as is the weight of the forests. All that changes is the cardinality of admissible cyclic structures, which are proportional to the number of permutations of cycle type $\lambda$~\cite{Sta11}.

Thus, we realize $P_\lambda(\mathbf x)$ and the cardinality $\#\EC(\lambda)$ as a ratio of $P_N(\mathbf x)$ and $\#\EC(N)$:
\begin{equation*}
    P_\lambda(\mathbf x)=\frac{N}{\prod_{l=1}^N l^{m_l}m_l!} P_N(\mathbf x),\quad \#\EC(\lambda)=\frac{N}{\prod_{l=1}^N l^{m_l}m_l!} \#\EC(N).
\end{equation*}
See~\eqref{Plambda pn ratio} and Proposition~\ref{lambda_cyclic_generating_function}.

Note that the partition generalization classifies all $k$-tuples. The composition $f_k\cdots f_1$ of any $k$-tuple eventually stabilizes to a subset $U_1\subseteq X_1$ on which it acts as a permutation, and thus admits a cycle type.

We study multisymmetric polynomials on a generalized family of oriented graphs in~\cite{GHI26_set_quiver}, but we do not explore virtual characters associated to them.

Section~\ref{section_background} provides the necessary graph-theoretic background. In Section~\ref{The weight of eventually constant functions}, we give canonical multisymmetric polynomials associated to $k$-tuple of cyclic functions that are eventually constant. In Section~\ref{The cardinality of eventually constant functions}, we prove that evaluating one into the multivariables gives the enumeration of the number of eventually constant functions for a $k$-tuple. In Section~\ref{Specializations: k=2 and k=3}, we specialize to the cases of eventually constant pairs and eventually constant triples and give multisymmetric functions and the cardinalities associated to them. We also rewrite the multisymmetric functions as virtual characters of representations.

  Moving to eventually cyclic functions in Section~\ref{Eventually cyclic functions}, we extend the eventually constant framework, providing multisymmetric polynomials associated to eventually cyclic functions in Section~\ref{subsection_wt_eventually_cyclic_fns} (see Section~\ref{Eventually cyclic functions} for the definition of eventually $N$-cyclic $k$-tuple). In Section~\ref{Enumeration_eventually_cyclic}, we give the enumeration of eventually cyclic functions. We then specialize to eventually cyclic pairs and triples in Section~\ref{Eventually_cyclic_pairs_and_triples}.

  In Section~\ref{section_lambda_cyclic_functions}, we define eventually $\lambda$-cyclic $k$-tuple. We then give multisymmetric polynomials associated to $\lambda$ in Section~\ref{lambda_cyclic_functions}, and give an example of a $\lambda$-cyclic triple and discuss its enumeration using a counting argument.

\subsection*{Acknowledgments}
The authors are grateful to Mikhail Khovanov for insightful discussions. The authors thank the anonymous referee for the helpful comments.
The authors would like to thank the Department of Mathematics and Erin Kathleen Rowe and Jasmine SharDae' Jenkins at the Dean's Office of Krieger School of Arts $\&$ Sciences at Johns Hopkins University for the opportunity to conduct research and for their support. 
M.S. Im would like to thank the Simons Foundation in New York, NY for a dynamic and collaborative working atmosphere. 
The authors were partially supported by Simons Collaboration Award 994328. C. Holmes was also partially supported by NSF grant DMS-2428878.

\section{Background}
\label{section_background}
\subsection{Introduction to graphs}\label{Graph-theoretic framework}
We recall the necessary graph-theoretic terminologies.

We consider finite directed graphs (digraphs) that allow for loops but not multiple edges between vertices. A \defn{{digraph}} on a vertex set $U$ is a pair $D:=(U,E_D)$, where $E_D$ is a \defn{(directed) edge set}, a subset of ordered vertex pairs $(u,v)\in U\times U$. The edge $(u,v)$ is directed from $u$ towards $v$. We also denote the vertex set of a digraph $D$ by $V(D):=U$.

Consider digraphs $D$ and $G$. Then, $D$ is a \defn{subgraph} of $G$ if $V(D)\subseteq V(G)$ and $E_D\subseteq E_G$, and $D$ is a \defn{spanning subgraph} of $G$ if $D$ is a subgraph and $V(D)=V(G)$.

The \defn{union} of digraphs $D$ and $G$ is the digraph $(V(D)\cup V(G),E_D\cup E_G)$. When the edge sets $E_D$ and $E_G$ are disjoint, we denote the union graph by $D\oplus G$. Also, for any subset $U\subseteq V(D)$, let $D_U$ denote the spanning digraph formed by removing all outgoing edges from $U$.

A \defn{trail} in $D$ from a vertex $s_1$ to a vertex $s_t$ is a sequence of distinct adjacent edges in $D$ leading from $s_1$ to $s_t$:
\[(s_1,s_2),\,(s_2,s_3),\,\ldots,\,(s_{t-1},s_t).\]

A \defn{cycle} is a nonempty trail in $D$ along vertices $s_1,\ldots,s_t,s_1$ in which $s_1,\ldots,s_{t}$ are distinct, and $D$ is \defn{acyclic} if it does not contain any cycles. A digraph $C$ whose edge set forms a cycle along its vertices is called a \defn{(directed) cycle graph}.

The \defn{out-degree} of a vertex $u\in V(D)$ is the number of outgoing edges $(u,v)$ in $D$; \defn{in-degree} is defined analogously. The digraph $D$ is \defn{functional} if every vertex has out-degree $1$, which holds precisely if $D$ corresponds to the standard set-theoretic graph of an endomorphism $f_D:V(D)\to V(D)$ on $V(D)$. Note that vertices in cycle graphs have in and out-degree $1$, thus are functional. In fact, a nonempty graph $C$ is a cycle graph precisely if it corresponds to a $|V(C)|$-cycle, $f_C:V(C)\to V(C)$.

For each vertex set $U$, we work in the vertex-indexed polynomial ring $\mathbb Z[x_u]_{u\in U}$ and define the \defn{weight} $w(D)$ of any digraph $D$ of $U$ via in-degrees:
\begin{equation}\label{Weight definition}
    w(D):=\prod_{(u,v)\in E_D}x_v.
\end{equation}
Note that if $D$ is functional, then each vertex $u\in U$ has the unique target $f_D(u)$, so the weight becomes the following product over $U$:
\[w(D)=\prod_{u\in U}x_{f_D(u)}.\]
We also extend $w$ additively over any subset $\mathcal S$ of digraphs on $U$:
\[
w(\mathcal{S}):=\sum_{D\in\mathcal{S}}w(D).
\]

A digraph $D$ is an \defn{in-forest} rooted at $R\subseteq V(D)$ if there is at most one trail from each vertex $u\in V(D)$ to a vertex $v\in R$. If $D$ is an in-forest rooted at $R$, outgoing edges from all vertices correspond to distinct nonempty trails leading to $R$, so vertices in $V(D)\setminus R$ have out-degree $1$ and vertices in $R$ have out-degree $0$. Also, observe that all trails terminate in $R$ in at most $|V(D)\setminus R|$ steps, and that $D$ is acyclic.

We decompose functional digraphs into cyclic and forest parts, following \cite{FS09}. Let $D$ be such a digraph with corresponding endomorphism $f_D:V(D)\to V(D)$. For each vertex $u\in V(D)$, the unique trail from $u$ follows the iterated values of $f_D$:
\[u,\,f_D(u),\,f_D(f_D(u)),\,\ldots,\,f^m_D(u),\,\ldots\]
As $D$ is finite, this sequence must begin to repeat. Thus, either $u$ belongs to a cycle if $u$ is repeated, or $u$ leads to a cycle following a trail of vertices that are not repeated.

Let $C(D)$ denote the \defn{cyclic part} of $D$, the subgraph induced on the vertices $u$ that belong to a cycle. The subgraph formed by removing all cycle edges, $D_{V(C(D))}$, is an in-forest rooted at $V(C(D))$. Since the cycle and non-cycle edges partition $E_D$, we have the disjoint decomposition and weight factorization:
\[D=C(D)\oplus D_{V(C(D))},\quad\text{and}\quad w(D)=w(C(D))w( D_{V(C(D))}),
\]
respectively. 

Note that the iterated images $f^m(V(D))$ stabilize to the set of cyclic vertices $V(C(D))$, and that $f_D$ acts as a permutation on this set. We also call the restricted permutation $f_D:V(C(D))\to V(C(D))$ the \defn{cyclic part} of the endomorphism $f_D$.

Since each vertex of $D$ has a single outgoing edge, distinct cycles must have disjoint vertex sets, and the unique trail from every vertex leads to exactly one cycle.

\subsection{Cyclic digraphs}
We now turn to a specific vertex set, the disjoint union of components, $X:=\sqcup_{i=1}^kX_i$, and a specific graph. Let $\Gamma$ be the digraph on $X$ which has an edge from all vertices in each component $X_i$ to all vertices in $X_{i+1}$, with indices modulo $k$. We consider the functional and cyclic subgraphs of $\Gamma$.

A functional subgraph $D$ of $\Gamma$ can be viewed both as its corresponding endomorphism $f_D:V(D)\to V(D)$ and as a $k$-tuple of functions. Setting $V(D)_i:=V(D)\cap X_i$, the map $f_D$ splits into a $k$-tuple via restriction to each component:
\[(f_1,\ldots,f_k),\quad f_i:V(D)_i\to V(D)_{i+1},\quad f_i(u):=f_D(u).\]
Conversely, given any vertex subset $U\subseteq X$ with $U_i:=U\cap X_i$, any $k$-tuple of functions 
\[
(f_1,\ldots,f_k),\quad f_i:U_i\to U_{i+1},
\]
reconstitutes an endomorphism $f_D:U\to U$ given by
\[
f_D(u):=f_i(u),\quad u\in X_i,\quad i=1,\ldots,k,
\]
and thus corresponds to a functional subgraph of $\Gamma$ on $U$. Thus, we recover the weight assignment for spanning $k$-tuples $(f_1,\ldots,f_k)$ in \eqref{k-tuple weight definition}:
\[w(f_1,\ldots,f_k)=\prod_{u\in X}x_{f_D(u)}=\prod_{i=1}^k\prod_{u\in X_i}x_{f_i(u)}.\]

The following structural Lemma \ref{Cycle bijection lemma} characterizes cycles on $\Gamma$. In particular, we see that cycles distribute evenly across each component $X_i$. To study the vertex sets of cycles, we define a \defn{choice set} as a vertex subset $U \subseteq X$ that contains exactly one element from each component $X_i$, denoted $u_i \in U \cap X_i$. More generally, for any positive integer $N\in\mathbb N$, an \defn{$N$-choice set} is a subset $U\subseteq X$ that has $N$ elements in each component, denoted $U_i:=X_i\cap U$. A standard choice set is thus a $1$-choice set.

\begin{lemma}\label{Cycle bijection lemma}
Let $C$ be a functional subgraph of $\Gamma$ with corresponding $k$-tuple $(f_1,\ldots,f_k)$ and endomorphism $f_C:V(C)\to V(C)$. Let $U := V(C)$ and $N :=|U_1|$.

Then $C$ is a cycle graph if and only if each map $f_i:U_i\to U_{i+1}$ is a bijection and the composition $f_k\cdots f_1:U_1\to U_1$ is an $N$-cycle. Consequently, if $C$ is a cycle graph, its vertex set $U$ is an $N$-choice set.
\end{lemma}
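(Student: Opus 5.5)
We argue both directions directly from the definitions: a cycle graph is a functional digraph in which $f_C$ is a single $|U|$-cycle on $U=V(C)$. Since $C\subseteq\Gamma$, every edge leaves some $U_i$ and enters $U_{i+1}$, so $f_C(U_i)\subseteq U_{i+1}$. Every vertex is visited in turn along the orbit.

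For the forward direction, suppose $C$ is a cycle graph. Then $f_C$ is a permutation of $U$, in particular injective, so each restriction $f_i:U_i\to U_{i+1}$ is injective. This gives $|U_1|\le |U_2|\le\cdots\le|U_k|\le|U_1|$, so all $|U_i|=N$ and each $f_i$ is a bijection. This already shows that $U$ is an $N$-choice set.

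Next we show that $g:=f_k\cdots f_1=f_C^k|_{U_1}$ is an $N$-cycle. Fix $u\in U_1$. The $f_C$-orbit of $u$ is all of $U$, of length $|U|=kN$. Because the component index advances by one at each step, the orbit returns to $X_1$ exactly at the times $0,k,2k,\ldots$. Hence the $g$-orbit of $u$ consists of the points of $U$ lying in $U_1$, namely all $N$ of them, and it has length exactly $N$. So $g$ is an $N$-cycle. (We need $N\ge1$, which holds since a cycle is nonempty and meets every component.)

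For the converse, assume each $f_i$ is a bijection and $g$ is an $N$-cycle. Then $f_C$ is a bijection of $U$, so $C$ is a disjoint union of cycles, and it suffices to show that $f_C$ is transitive on $U$. Take $v\in U_i$. Applying the bijections $f_i,\ldots,f_k$ in order sends $v$ into $U_1$, so $v$ lies in the orbit of some point of $U_1$. All points of $U_1$ lie in a single $f_C$-orbit because $g=f_C^k|_{U_1}$ is transitive on $U_1$. Hence $f_C$ has exactly one orbit and $C$ is a cycle graph.

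The only delicate point is the bookkeeping that the orbit meets $X_1$ exactly every $k$ steps. This follows from the modular index shift along the edges of $\Gamma$.
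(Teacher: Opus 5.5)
Your proof is correct and follows essentially the same argument as the paper. Both get bijectivity of the $f_i$ from $f_C$ being a permutation, use $f_C^k|_{U_1}=f_k\cdots f_1$ to pass between an $f_C$-orbit of length $kN$ and a $g$-orbit of length $N$, and obtain transitivity on $U$ by linking every $U_i$ to the single orbit through $U_1$. The differences are cosmetic: your injectivity chain $|U_1|\le|U_2|\le\cdots\le|U_k|\le|U_1|$ justifies bijectivity, which the paper only asserts, and you push vertices of $U_i$ forward into $U_1$, whereas the paper reaches them from $U_1$ via $f_{i-1}\cdots f_1$.
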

\begin{proof}
Fix any vertex $u\in U_1$.

If $C$ is a cycle graph, then $f_C$ is a bijective $U$-cycle, so each component $f_i:U_i\to U_{i+1}$ must be a bijection. Thus, $U$ splits into $k$ components $U_1,\ldots,U_k$ of equal size. Note that $f_C^k$ is equal to the composition $f_k\cdots f_1$ on $U_1$, and $u$ has an orbit of cardinality $|U|=Nk$ under $f_C$. Therefore, $u$ has an orbit of cardinality $\frac{|U|}{k}=N$ under $f_k\cdots f_1$, so $f_k\cdots f_1$ is a $N$-cycle.

Conversely, suppose each map $f_i$ is a bijection and that the composition $f_k\cdots f_1:U_1\to U_1$ acts transitively. The orbit of $u$ under $f_C$ contains the orbit under $f_k\cdots f_1$, which is the entire component $U_1$. Since the elements of each component $U_i$ can be reached by the bijection $f_{i-1}f_{i-2}\cdots f_1:U_1\to U_i$, we see $f_C$ acts transitively on $U$. Thus $C$ is a cycle graph.
\end{proof}

\begin{figure}
    \centering
\begin{tikzpicture}[scale=0.6,decoration={
    markings,
    mark=at position 0.5 with {\arrow{>}}}]

\begin{scope}[shift={(0,0)}]



\node at (0,3.5) {$a$};

\node at (0,0.42) {$b$};

\draw[thick, fill] (0.17,3) arc (0:360: 1.75 mm);

\draw[thick, fill] (0.17,1) arc (0:360: 1.75 mm);

\node at (3.05,3.55) {$a'$};

\node at (3.05,0.45) {$b'$};

\draw[thick, fill] (3.17,3) arc (0:360: 1.75 mm);

\draw[thick, fill] (3.17,1) arc (0:360: 1.75 mm);

\node at (6.15,3.55) {$a''$};

\node at (6.15,0.45) {$b''$};

\draw[thick, fill] (6.17,3) arc (0:360: 1.75 mm);

\draw[thick, fill] (6.17,1) arc (0:360: 1.75 mm);

\draw[thick,postaction={decorate}] (0,3) -- (3,3);

\draw[thick,postaction={decorate}] (0,1) -- (3,1);

\draw[thick,postaction={decorate}] (3,3) -- (6,1);


\draw[thick] (3,1) -- (4.2,1.8);

\draw[thick,postaction={decorate}] (4.8,2.2) -- (6,3);

\draw[thick] (6,3) .. controls (6.85,3.25) and (6.85,4.00) .. (6,4.25);

\draw[thick] (6,1) .. controls (6.85,0.75) and (6.85,0.00) .. (6,-0.25);

\draw[thick] (0,4.25) .. controls (1.5,4.75) and (4.5,4.75) .. (6,4.25);

\draw[thick] (0,-0.25) .. controls (1.5,-0.75) and (4.5,-0.75) .. (6,-0.25);

\draw[thick,postaction={decorate}] (0,4.25) .. controls (-1.25,3.75) and (-1.25,2) .. (0,1);

\draw[thick,postaction={decorate}] (0,-0.25) .. controls (-1.25,0.25) and (-1.25,2) .. (0,3);

\node at (1.85,3.5) {$f_1$};

\node at (1.85,0.5) {$f_1$};

\node at (3.90,3) {$f_2$};

\node at (3.80,1) {$f_2$};

\node at (7, 4.15) {$f_3$};

\node at (6.9,-0.15) {$f_3$};

\end{scope}


\begin{scope}[shift={(12,0)}]



\node at (0,3.5) {$a$};

\node at (0,0.42) {$b$};

\draw[thick, fill] (0.17,3) arc (0:360: 1.75 mm);

\draw[thick, fill] (0.17,1) arc (0:360: 1.75 mm);

\node at (3.05,3.55) {$a'$};

\node at (3.05,0.45) {$b'$};

\draw[thick, fill] (3.17,3) arc (0:360: 1.75 mm);

\draw[thick, fill] (3.17,1) arc (0:360: 1.75 mm);

\node at (6.15,3.55) {$a''$};

\node at (6.15,0.45) {$b''$};

\draw[thick, fill] (6.17,3) arc (0:360: 1.75 mm);

\draw[thick, fill] (6.17,1) arc (0:360: 1.75 mm);

\draw[thick,postaction={decorate}] (0,3) -- (3,3);

\draw[thick,postaction={decorate}] (0,1) -- (3,1);

\draw[thick,postaction={decorate}] (3,3) -- (6,1);


\draw[thick] (3,1) -- (4.2,1.8);

\draw[thick,postaction={decorate}] (4.8,2.2) -- (6,3);

\draw[thick] (6,3) .. controls (6.85,3.25) and (6.85,4.00) .. (6,4.25);

\draw[thick] (6,1) .. controls (6.85,0.75) and (6.85,0.00) .. (6,-0.25);

\draw[thick] (0,4.25) .. controls (1.5,4.75) and (4.5,4.75) .. (6,4.25);

\draw[thick] (0,-0.25) .. controls (1.5,-0.75) and (4.5,-0.75) .. (6,-0.25);

\draw[thick,postaction={decorate}] (0,4.25) .. controls (-0.85,4.05) and (-0.85,3.2) .. (0,3);

\draw[thick,postaction={decorate}] (0,-0.25) .. controls (-0.85,-0.05) and (-0.85,0.8) .. (0,1);

\node at (1.85,3.5) {$f_1$};

\node at (1.85,0.5) {$f_1$};

\node at (3.90,3) {$f_2$};

\node at (3.80,1) {$f_2$};

\node at (7, 4.15) {$f_3$};

\node at (6.9,-0.15) {$f_3$};

\end{scope}

\end{tikzpicture}

    \caption{Left: the composition $f_3f_2f_1$ is not a $2$-cycle on $U_1$. 
    Right: the composition $f_3f_2f_1$ is a $2$-cycle on $U_1$.
    See Lemma~\ref{Cycle bijection lemma} and Example~\ref{example_2_cycle}.}
    \label{fig_00003}
\end{figure}

\begin{example}
\label{example_2_cycle}
Let $k=3$ and $X_1 =\{ a, b \}$, $X_2 = \{a', b' \}$, and 
$X_3 = \{ a'',b'' \}$. 
Let $U= X_1 \sqcup X_2 \sqcup X_3$ be the $2$-choice set.
Define 
\begin{equation*}
\begin{split}
f_1(a) &= a', \quad  f_2(a')= b'', \quad f_3(a'') = b,  \\
f_1(b) &= b', \quad f_2(b') = a'', \quad f_3(b'') = a.
\end{split}
\end{equation*}
See Figure~\ref{fig_00003}, left.
Each $f_i$ is a bijection, satisfying $(f_3f_2f_1)(a) = a$ and $(f_3f_2f_1)(b) = b$. 
Since we have two disjoint, directed cycles of the form  
$a\mapsto a'\mapsto b''\mapsto a$ and 
$b\mapsto b'\mapsto a''\mapsto b$ of length $3$, 
the composition $f_3f_2f_1$ is not a $2$-cycle on $U_1$. By Lemma~\ref{Cycle bijection lemma}, this is not a cycle graph.

If we instead define 
\begin{equation*}
\begin{split}
f_1(a) &= a', \quad  f_2(a')= b'', \quad f_3(a'') = a,  \\
f_1(b) &= b', \quad f_2(b') = a'', \quad f_3(b'') = b,
\end{split}
\end{equation*}
then  $(f_3f_2f_1)(a) = b$ and $(f_3f_2f_1)(b) = a$, giving us a $2$-cycle on $U_1$. See Figure~\ref{fig_00003}, right.
By Lemma~\ref{Cycle bijection lemma}, the functional graph is a one directed cycle of length $6$, where $a\mapsto a'\mapsto b''\mapsto b\mapsto b'\mapsto a''\mapsto a$. Thus, the connectedness of the cycle graph is determined by the cycle structure of the composition $f_3f_2f_1$ of the $k$-tuple.
\end{example}

Therefore, there is a correspondence between the cycles of a spanning functional subgraph $(f_1,\ldots,f_k)$ of $\Gamma$ and the cycles of its composition $f_k\cdots f_1$. The composition $f_k\cdots f_1$ acts as an $N$-cycle on a subset $U_1\subseteq X_1$ of cardinality $N$ if and only if the subgraph $(f_1,\ldots,f_k)$ contains a cycle on an $N$-choice set $U$ extending $U_1$. This correspondence is well-defined since any such subcycle is uniquely determined its intersection $U_1$ with $X_1$. Explicitly, the vertex set $U$ is given by 
\[
U_2:=f_1(U_1),\, U_3:=f_2(U_2),\,\ldots,\,U_k:=f_{k-1}(U_{k-1}).
\]

Next, we define three families of subgraphs of $\Gamma$. Fix any $N$-choice set $U\subseteq X$. Let $\mathcal C_U$ denote the set of cyclic subgraphs of $\Gamma$ on $U$. Restated, $D\in\mathcal C_U$ if $D$ is equal to its cyclic part $C(D)$ and comprised solely of disjoint cycles. Let $\mathcal D_U$ denote the set of spanning functional subgraphs whose cyclic part belongs to $\mathcal C_U$. Finally, let $\mathcal F_U$ denote the set of spanning in-forests of $\Gamma$ rooted at $U$.

\begin{lemma}\label{Cyclic forest weight decomposition}
Fix any vertex subset $U\subseteq X$, any subset $\mathcal C'_U\subseteq\mathcal C_U$, and let $\mathcal D'_U\subseteq\mathcal D_U$ be the corresponding subset of digraphs $D\in\mathcal D_U$ for which the cyclic part $C(D)$ belongs to $\mathcal C'_U$. Then, the map $D\mapsto (C(D),D_U)$ is a bijection from $\mathcal D'_U$ to $\mathcal C'_U\times\mathcal F_U$, 
and
\begin{equation}\label{D'U weight}
    w(\mathcal D'_U)=|\mathcal C'_U|\left(\prod_{u\in U}x_u\right)w(\mathcal F_U).
\end{equation}
\end{lemma}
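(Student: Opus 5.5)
The plan is to verify in turn that the map is well defined, that it is injective and surjective, and that it respects weights.

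First, well-definedness. Let $D\in\mathcal D'_U$. By definition $C(D)\in\mathcal C'_U$, so in particular $V(C(D))=U$. By the functional decomposition recalled from \cite{FS09}, $D_{V(C(D))}=D_U$ is an in-forest rooted at $V(C(D))=U$. It is spanning, since removing edges does not change the vertex set. It is a subgraph of $\Gamma$, since $D$ is. Hence $D_U\in\mathcal F_U$, and the map $D\mapsto (C(D),D_U)$ lands in $\mathcal C'_U\times\mathcal F_U$.

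Next, injectivity and surjectivity.
\begin{itemize}
\item \emph{Injectivity.} Recall the disjoint decomposition $D=C(D)\oplus D_U$. The edge set of $D$ is recovered as $E_{C(D)}\cup E_{D_U}$, so $D$ is determined by the pair.
\item \emph{Surjectivity.} Given $(C,F)\in\mathcal C'_U\times\mathcal F_U$, set $D:=C\oplus F$. The edge sets are disjoint: $F$ has no outgoing edges from $U$, while every edge of $C$ starts in $U$. Every vertex of $U$ has out-degree $1$ from $C$, and every vertex of $X\setminus U$ has out-degree $1$ from $F$. So $D$ is a spanning functional subgraph of $\Gamma$.
\end{itemize}

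The one step requiring care is showing $C(D)=C$, i.e.\ that the cyclic vertices of $D$ are exactly $U$.
\begin{itemize}
\item Vertices of $U$ lie on cycles of $C\subseteq D$, so they are cyclic in $D$.
\item A vertex $u\notin U$ follows edges of $F$. Since $F$ is an in-forest rooted at $U$ (acyclic, with all trails reaching $U$ within $|X\setminus U|$ steps), the trail from $u$ enters $U$. It can never return to $u$, because once in $U$ it follows edges of $C$ and so stays in $U$. Hence $u$ is not cyclic.
\end{itemize}
Therefore $C(D)=C$, and the subgraph induced on $U$ is exactly $C$. Then $D_U=F$, because removing the outgoing edges of $U$ removes precisely the edges of $C$. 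This gives $D\in\mathcal D'_U$ mapping to $(C,F)$.

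Finally, the weights. From the factorization $w(D)=w(C(D))\,w(D_U)$ and the bijection,
\[
w(\mathcal D'_U)=\Bigl(\sum_{C\in\mathcal C'_U}w(C)\Bigr)\,w(\mathcal F_U).
\]
Each $C\in\mathcal C_U$ is functional on $U$ and a permutation of $U$, since it is a disjoint union of cycles covering $U$. So every $u\in U$ has in-degree exactly $1$, giving $w(C)=\prod_{u\in U}x_u$. Summing over $\mathcal C'_U$ yields $|\mathcal C'_U|\prod_{u\in U}x_u$, which is \eqref{D'U weight}.
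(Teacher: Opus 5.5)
Your proposal is correct and follows the paper's proof essentially step for step. Both arguments use the same ingredients: well-definedness via the cyclic/forest decomposition, the inverse map $(C,F)\mapsto C\oplus F$, the weight factorization $w(D)=w(C(D))\,w(D_U)$, and the fact that every vertex of $U$ has in-degree $1$ in $C$. Your explicit check that no vertex of $X\setminus U$ is cyclic in $C\oplus F$ (so that $C(C\oplus F)=C$) spells out what the paper compresses into ``the forest contributes no cycles.''
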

\begin{proof}
    By the cyclic and forest decomposition in the previous section, $D\mapsto (C(D),D_U)$ is a well defined map from $\mathcal D'_U$ to $\mathcal C'_U\times\mathcal F_U$, and $D = C(D)\oplus D_U$.

    Conversely, fix any $(C,F)\in\mathcal C'_U\times \mathcal F_U$. The disjoint union $C\oplus F$ is a subgraph of $\Gamma$ since $C$ and $F$ are, and it is functional since vertices in $U$ have out-degree $1$ from $C$ and vertices in $X\setminus U$ have out-degree $1$ from $F$. Next, $C(C\oplus F)=C$, since the forest $F$ contributes no cycles, and removing the edges outgoing from $U$ leaves $(C\oplus F)_{U}=F$. Since $C\in\mathcal C'_U$, we have $C\oplus F\in\mathcal D'_U$, and we then see $(C,F)\mapsto C\oplus F$ yields the inverse, since $(C(C\oplus F),(F\oplus C)_U)=(C,F).$

    Since weight factors over disjoint edge unions, 
    we have
    \[
    w(\mathcal D'_U) 
    = \sum_{C\in\mathcal C'_U}\sum_{F\in\mathcal F_U}w(C\oplus F) 
    = \sum_{C\in\mathcal C'_U}\sum_{F\in\mathcal F_U}w(C) w(F) 
    = w(\mathcal C'_U)w(\mathcal F_U).
    \]

    Finally, every vertex in a cyclic graph $C\in\mathcal C'_U$ belongs to a unique disjoint cycle, thus has in-degree $1$. Therefore, $w(C)=\prod_{u\in U}x_u$. So $w(\mathcal C'_U)=|\mathcal C'|\left(\prod_{u\in U}x_u\right)$, and \eqref{D'U weight} follows.
\end{proof}

\begin{figure}
    \centering
\begin{tikzpicture}[scale=0.6,decoration={
    markings,
    mark=at position 0.55 with {\arrow{>}}}]

\begin{scope}[shift={(0,0)}]



\node at (0,3.5) {$a$};

\node at (0,0.42) {$b$};

\draw[thick, fill] (0.17,3) arc (0:360: 1.75 mm);

\draw[thick, fill] (0.17,1) arc (0:360: 1.75 mm);

\node at (3.05,3.55) {$a'$};

\node at (3.05,0.45) {$b'$};

\draw[thick, fill] (3.17,3) arc (0:360: 1.75 mm);

\draw[thick, fill] (3.17,1) arc (0:360: 1.75 mm);

\node at (6.15,3.55) {$a''$};

\node at (6.15,0.45) {$b''$};

\draw[thick, fill] (6.17,3) arc (0:360: 1.75 mm);

\draw[thick, fill] (6.17,1) arc (0:360: 1.75 mm);

\draw[thick,postaction={decorate}] (0,3) -- (3,3);

\draw[thick,postaction={decorate}] (3,3) -- (6,3);

\draw[thick,postaction={decorate}] (0,1) -- (3,3);

\draw[thick,postaction={decorate}] (3,1) -- (6,3);

\draw[thick] (6,3) .. controls (6.85,3.25) and (6.85,4.00) .. (6,4.25);

\draw[thick] (6,1) .. controls (6.85,0.75) and (6.85,0.00) .. (6,-0.25);

\draw[thick] (0,4.25) .. controls (1.5,4.75) and (4.5,4.75) .. (6,4.25);

\draw[thick] (0,-0.25) .. controls (1.5,-0.75) and (4.5,-0.75) .. (6,-0.25);

\draw[thick,postaction={decorate}] (0,4.25) .. controls (-0.85,4.05) and (-0.85,3.2) .. (0,3);

\draw[thick,postaction={decorate}] (0,-0.25) .. controls (-1.25,0.25) and (-1.25,2) .. (0,3);

\node at (1.90,3.5) {$f_1$};

\node at (1.2,1.1) {$f_1$};

\node at (5,3.5) {$f_2$};

\node at (4,1.1) {$f_2$};

\node at (7.15, 4.05) {$f_3$};

\node at (7.1, 0.15) {$f_3$};

\draw[line width = 0.5mm, orange!90!black, dotted] (3.25,3.80) ellipse (4.85 and 1.55);

\node at (-3.25,3.75) {$C(D)$};

\draw[line width = 0.5mm, green!60!black, dashed] (3.38,0.33) ellipse (4.85 and 1.45);

\node at (-3.25,0.25) {$D_U$};

\end{scope}

\end{tikzpicture}

    \caption{Example of a cyclic $C(D)$ and in-forest $D_U$ decomposition in Lemma~\ref{Cyclic forest weight decomposition}. Also see  Example~\ref{example_decomposition}.}
    \label{fig_00004}
\end{figure}

\begin{example}
\label{example_decomposition}
Let $k=3$ and $X_1 = \{ a,b\}$, $X_2 = \{ a',b'\}$, and 
$X_3 = \{ a'',b''\}$. Let the choice set be $U=\{a,a',a'' \}$.
Let 
\begin{equation*}
\begin{split}
f_1(a) &= a', \quad  f_2(a')= a'', \quad f_3(a'') = a,  \\
f_1(b) &= a', \quad f_2(b') = a'', \quad f_3(b'') = a.
\end{split}
\end{equation*}
The cyclic part is $a\mapsto a'\mapsto a''\mapsto a$, which is an element of $\mathcal{C}_U$.
The action of $f=(f_1,f_2,f_3)$ on $\{b,b',b''\}$ 
induces an in-forest rooted at $U$: $b\mapsto a'$, $b'\mapsto a''$, $b''\mapsto a$, giving a unique directed trail into the root set.
Since there are no additional cycles, the in-forest rooted at $U$ is an element of $\mathcal{F}_U$. The corresponding functional graph is the disjoint edge union $D = C\oplus F$, where the weight factorization is $w(C) = x_a x_{a'}x_{a''}$ and $w(F) = x_{a'}x_{a''}x_{a} = x_a x_{a'}x_{a''}$. 
Since $w(D)=(x_a x_{a'}x_{a''})^2$, we have $w(D)=w(C(D))w(D_U)$, as well as the bijection between $D$ and $(C(D),D_U)$, in Lemma~\ref{Cyclic forest weight decomposition}. See Figure~\ref{fig_00004}.
\end{example}

\subsection{Matrix-tree calculation}\label{The matrix-tree calculation}

The \defn{directed Laplacian matrix} of $\Gamma$ is a square matrix indexed by the vertices of $X$ with entries given by 
\begin{equation}\label{Entries of L}
\ell_{u,v} = 
\begin{cases} 
e_1(X_{i+1}) &\text{if } u = v \in X_i, \\
\quad -x_v & \text{if } u \in X_i \text{ and } v\in X_{i+1}, \\
\qquad  0 & \text{otherwise.}
\end{cases}
\end{equation}

For each vertex subset $U \subseteq X$, let $\mathcal{L}(U)$ denote the principal submatrix obtained by deleting the rows and columns corresponding to vertices in $U$.

We need the following well-known directed matrix-tree theorem~\cite{CK78}:

\begin{theorem}
\label{Matrix tree theorem}
For any vertex subset $U \subseteq X$, the total weight of the spanning in-forests of $\Gamma$ rooted at $U$ is given by
\[w(\mathcal{F}_U) =\det(\mathcal{L}(U)).\]
\end{theorem}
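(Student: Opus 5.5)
The plan is the standard sign-reversing argument: expand $\det(\mathcal L(U))$ with the Leibniz formula and reinterpret each surviving term as a choice of out-edge for every vertex of $X\setminus U$. Write $W:=X\setminus U$, so that $\mathcal L(U)$ is indexed by $W\times W$. By the Leibniz formula,
\[
\det(\mathcal L(U))=\sum_{\sigma\in \mathfrak S_W}\sgn(\sigma)\prod_{u\in W}\ell_{u,\sigma(u)} .
\]
By \eqref{Entries of L}, a term is nonzero only if every non-fixed point $u\in X_i$ of $\sigma$ satisfies $\sigma(u)\in X_{i+1}\cap W$. For each fixed point $u\in X_i$, expand the diagonal entry as $e_1(X_{i+1})=\sum_{v\in X_{i+1}}x_v$, which amounts to choosing an out-neighbour $v$ of $u$ in $\Gamma$ (possibly in $U$).

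After this expansion, each term is indexed by a pair $(\phi,\mathcal Z)$ with the following data.
\begin{itemize}
\item $\phi\colon W\to X$ is a map with $\phi(u)\in X_{i+1}$ for $u\in X_i$, that is, a functional choice of out-edges of $\Gamma$ from $W$.
\item $\mathcal Z$ is a set of disjoint cycles of $\phi$ lying entirely in $W$; these are the nontrivial cycles of $\sigma$, on which $\phi=\sigma$.
\end{itemize}
Conversely, every such pair arises from exactly one $\sigma$ and one choice of diagonal summands: let $\sigma$ equal $\phi$ on the vertices of $\mathcal Z$ and fix every other vertex. The monomial of the term is $\prod_{u\in W}x_{\phi(u)}$, which is the weight of the edge set $\{(u,\phi(u))\}$. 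A cycle of $\sigma$ of length $m\ge 2$ contributes sign $(-1)^{m-1}$ from $\sgn\sigma$ and $(-1)^m$ from the $m$ off-diagonal entries $-x_v$, so it contributes $-1$ in total. Hence
\[
\det(\mathcal L(U))=\sum_{\phi}\Bigl(\prod_{u\in W}x_{\phi(u)}\Bigr)\sum_{\mathcal Z\subseteq \mathrm{Cyc}(\phi)}(-1)^{|\mathcal Z|},
\]
where $\mathrm{Cyc}(\phi)$ is the set of cycles of the partial functional digraph $\phi$ that avoid $U$.

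The inner sum equals $1$ if $\mathrm{Cyc}(\phi)=\emptyset$ and $0$ otherwise, by the binomial identity $\sum_{\mathcal Z}(-1)^{|\mathcal Z|}=(1-1)^{|\mathrm{Cyc}(\phi)|}$. It remains to identify the $\phi$ with no cycles with $\mathcal F_U$. If $\phi$ has no cycles, then following $u,\phi(u),\phi^2(u),\dots$ must leave $W$ within $|W|$ steps, since $W$ is finite and acyclic under $\phi$. So every vertex has a unique trail into $U$, and the digraph with edge set $\{(u,\phi(u)):u\in W\}$ is a spanning in-forest of $\Gamma$ rooted at $U$. Conversely, an in-forest rooted at $U$ gives out-degree $1$ on $W$ and $0$ on $U$ and is acyclic, as observed in Section~\ref{Graph-theoretic framework}. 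This bijection preserves weights by \eqref{Weight definition}, so the sum equals $w(\mathcal F_U)$.

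I expect the main obstacle to be the bookkeeping in the middle step: showing that the expansion of the diagonal entries combined with the permutation terms is exactly a bijection onto pairs $(\phi,\mathcal Z)$, with no term double-counted and the signs correct. One subtlety is the degenerate case $k=1$, where $X_{i+1}=X_i$. There a diagonal choice $v=u$ is a loop of $\phi$, and one must check that \eqref{Entries of L} is read so that the loop at $u$ is either excluded or cancels as a length-one cycle. I would handle this by treating loops as $1$-cycles in $\mathrm{Cyc}(\phi)$, with the diagonal entry correspondingly adjusted. For $k\ge 2$, $\Gamma$ has no loops and the argument goes through as stated.
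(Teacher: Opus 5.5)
Your proof is correct, but the paper gives no proof of its own to follow: it quotes this statement as the directed (all-minors) matrix-tree theorem of Chaiken--Kleitman \cite{CK78} and uses it as a black box. Your argument is the standard self-contained proof, specialized to principal minors of this particular Laplacian. You expand $\det(\mathcal L(U))$ by Leibniz and split each diagonal entry $e_1(X_{i+1})$ into out-edge choices. You then index the terms by pairs $(\phi,\mathcal Z)$ with $\mathcal Z\subseteq\mathrm{Cyc}(\phi)$, note that each nontrivial cycle of $\sigma$ contributes $-1$, and kill every $\phi$ that has a cycle via $(1-1)^{|\mathrm{Cyc}(\phi)|}$. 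The bijection $(\sigma,\text{diagonal choices})\leftrightarrow(\phi,\mathcal Z)$ and the sign count are right. The acyclic maps $\phi\colon W\to X$ are exactly the in-forests rooted at $U$, using the paper's clarification after its (loosely worded) definition: roots have out-degree $0$, the other vertices have out-degree $1$, and the digraph is acyclic.

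The citation gives generality: arbitrary weighted digraphs and non-principal minors. Your route gives self-containment and forces a convention issue into the open. Your remark on $k=1$ is more than cosmetic. There the two cases of \eqref{Entries of L} overlap on the diagonal, and read literally ($\ell_{u,u}=e_1(X_1)$) the identity is false. For example, $X_1=\{a,b\}$ and $U=\{a\}$ give $\det(\mathcal L(U))=x_a+x_b$, while $w(\mathcal F_U)=x_a$. The correct reading is $\ell_{u,u}=e_1(X_1)-x_u$. This is exactly what the paper's own block formula $H=D-ACB^T$ produces when $C=(1)$. Your device of treating loops as $1$-cycles handles it cleanly: you choose $\phi(u)=u$ from $e_1(X_1)$ and cancel it against $-x_u$.
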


\begin{lemma}\label{Laplacian calculation}
For any $N$-choice set $U$, the total weight of the rooted in-forests $\mathcal F_U$ is given by
\begin{equation}
\label{FU calculation}
w(\mathcal F_U) = 
\prod_{i=1}^k e_1(X_i)^{n_{i-1}-(N+1)}
\left(
\prod_{j=1}^k e_1(X_j) - \prod_{j=1}^k e_1(X_j\setminus U_j)
\right).
\end{equation}
\end{lemma}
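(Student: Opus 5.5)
The plan is to compute $\det(\mathcal L(U))$ directly, since Theorem~\ref{Matrix tree theorem} gives $w(\mathcal F_U)=\det(\mathcal L(U))$. I will exploit the fact that $\mathcal L(U)$ is a scalar block-diagonal matrix minus a matrix of rank at most $k$.

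\emph{Block structure.} Set $Y_i:=X_i\setminus U_i$, which has $n_i-N$ elements. Write $a_i:=e_1(X_{i+1})$ and $s_i:=e_1(Y_i)$. Order the rows and columns of $\mathcal L(U)$ by $Y_1,\dots,Y_k$. By \eqref{Entries of L}, the diagonal block on $Y_i$ is $a_i I$. The block from $Y_i$ to $Y_{i+1}$ is $-\mathbf 1_{Y_i}\,\mathbf y_{i+1}^{T}$, where $\mathbf y_{i+1}=(x_v)_{v\in Y_{i+1}}$; every row of this block is the same. All other blocks vanish. Hence
\[
\mathcal L(U)=\Delta-PQ^{T},
\]
where $\Delta$ is diagonal with $\det\Delta=\prod_i a_i^{\,n_i-N}$. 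Here $P$ has $k$ columns, the $i$-th being $\mathbf 1_{Y_i}$ supported on block $i$. Likewise $Q$ has $k$ columns, the $i$-th being $\mathbf y_{i+1}$ supported on block $i+1$. For $k=1$ the two cases in \eqref{Entries of L} overlap, and I interpret the entry as their sum, $e_1(X_1)-x_u$ on the diagonal; the same decomposition then still holds.

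\emph{Matrix determinant lemma.} By Sylvester's identity,
\[
\det\mathcal L(U)=\det\Delta\cdot\det\bigl(I_k-Q^{T}\Delta^{-1}P\bigr).
\]
The $k\times k$ matrix $Q^{T}\Delta^{-1}P$ has $(i,j)$ entry $\mathbf y_{i+1}^{T}$ applied to block $i+1$ of $\Delta^{-1}\mathbf 1_{Y_j}$. This is nonzero only when $j\equiv i+1$, in which case it equals $c_{i+1}:=s_{i+1}/a_{i+1}$. So $I_k-Q^T\Delta^{-1}P$ is the identity minus a weighted cyclic permutation matrix, and its determinant is $1-\prod_j c_j$. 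This can be checked by expanding along a row, or by noting that the weighted cyclic matrix has characteristic polynomial $t^k-\prod c_j$.

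\emph{Assembling.} Reindexing gives $\prod_i a_i^{n_i-N}=\prod_i e_1(X_i)^{n_{i-1}-N}$ and $\prod_j a_j=\prod_j e_1(X_j)$. Therefore
\[
\det\mathcal L(U)=\prod_i e_1(X_i)^{n_{i-1}-N}\Bigl(1-\tfrac{\prod_j e_1(X_j\setminus U_j)}{\prod_j e_1(X_j)}\Bigr),
\]
and pulling one factor $\prod_j e_1(X_j)$ into the bracket yields \eqref{FU calculation}.

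The main obstacle is bookkeeping rather than ideas. One must get the sign of the cyclic determinant right, namely that $\det(I-C)=1-\prod c_j$ for every $k$, including $k=1,2$. One must also handle degenerate cases: if some $Y_i$ is empty, the corresponding column of $P$ is zero and $s_i=0$, and the identity still holds. The lemma is really an identity of rational functions, which clears to the polynomial statement, so working over the fraction field with $a_i$ invertible is legitimate.
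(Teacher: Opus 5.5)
Your proposal is correct and follows essentially the same route as the paper: write $\mathcal L(U)$ as a scalar block-diagonal matrix minus a rank-$\le k$ term, apply the matrix determinant lemma, and evaluate the resulting $k\times k$ determinant (identity minus a weighted cyclic matrix) as $1-\prod_j c_j$. The differences are cosmetic. The paper factors the low-rank term as $ACB^T$ with an explicit cyclic permutation matrix $C$ and evaluates $\det(M)$ by the Leibniz formula, whereas you absorb the shift into $Q$ and use the characteristic polynomial. You also make explicit the $k=1$ diagonal convention and the empty-block cases, which the paper leaves implicit.
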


\begin{proof}
By Theorem~\ref{Matrix tree theorem}, we have $w(\mathcal{F}_U) = \det(\mathcal L(U))$, so define $H:=\mathcal L(U)$. To compute $\det (H)$, express $H$ as a block matrix, decompose it, and then apply the matrix determinant lemma.

Order each reduced component $X_i\setminus U_i$ and let $\widehat{\mathbf{x}}_i := (x_v)_{v \in X_i\setminus U_i}$ denote the ordered column vector of variables in $X_i\setminus U_i$. Let $p_i:=|X_i\setminus U_i|$, and let $\mathbf{1}_{p_i}$ denote the column vector of length $p_i$ with one in each coordinate. Then order $X\setminus U$ by sequentially concatenating $X_1\setminus U_1,\ldots,X_k\setminus U_k$.

Divide $H$ into block submatrices $H_{i,j}$ of dimension $p_i\times p_j$. Observe each block $H_{i,j}$ is indexed by $X_i\setminus U_i\times X_j\setminus U_j$, with entries
$(H_{i,j})_{u,v}=\ell_{u,v}$, where $u\in X_i\setminus U_i,$ and $v\in X_j\setminus U_j$.
There are three families of blocks. The entries of the diagonal blocks $H_{i,i}$ are given by
\[
(H_{i,i})_{u,v}=
\begin{cases}
    e_1(X_{i+1}) & \text{if }u=v,\\
    \hspace{7mm} 0 &  \text{if }u\neq v,
\end{cases}\]
so $H_{i,i}=e_1 (X_{i+1}) \I_{p_i}$, where $\I_{p_i}$ is $p_i \times p_i$ identity matrix. The entries of the cyclic super-diagonal blocks $H_{i,i+1}$ are given by
$(H_{i,i+1})_{u,v} = -x_v$,
so $H_{i,i+1} = -\mathbf{1}_{p_i} \widehat{\mathbf{x}}_{i+1}^T$. The remaining blocks $H_{i,j}$ are $0$. This yields the explicit block form
\begin{equation}
H = 
\begin{pmatrix}
	e_1(X_2) \I_{p_1} & -\mathbf{1}_{p_1} \widehat{\mathbf{x}}_2^T & 0 & \ldots & 0 & 0 \\
	0 & e_1(X_3) \I_{p_2} & -\mathbf{1}_{p_2} \widehat{\mathbf{x}}_3^T & \ldots & 0 & 0 \\
	\vdots & \vdots & \ddots & \ddots & \vdots & \vdots \\
    0 & 0 & 0 &  e_1 (X_{k-1}) \I_{p_{k-2}}
    & -\mathbf{1}_{p_{k-2}} \widehat{\mathbf{x}}_{k-1}^T  &  0\\ 
	0 & 0 & 0 & \ldots & e_1(X_k) \I_{p_{k-1}} & -\mathbf{1}_{p_{k-1}} \widehat{\mathbf{x}}_k^T \\
	-\mathbf{1}_{p_k} \widehat{\mathbf{x}}_1^T & 0 & 0 & \ldots & 0 & e_1(X_1) \I_{p_k}
\end{pmatrix}.
\end{equation}

We isolate the diagonal blocks of $H$ into a matrix
$D :=\diag(e_1(X_2) \I_{p_1}, \ldots, e_1(X_1) \I_{p_k})$,
and observe that $H=D-ACB^T$, where $C$ is the $k\times k$ cyclic permutation matrix
\[C=
\begin{pmatrix}
0 & 1 & 0 & \cdots & 0 \\
0 & 0 & 1 & \cdots & 0 \\
\vdots & \vdots & \ddots & \ddots & \vdots \\
0 & 0 & 0 & \cdots & 1 \\
1 & 0 & 0 & \cdots & 0
\end{pmatrix},
\] 
and 
$A = \diag(\mathbf{1}_{p_1}, \ldots, \mathbf{1}_{p_k})$ and $B =\diag(\widehat{\mathbf{x}}_1, \ldots, \widehat{\mathbf{x}}_k)$ are block-diagonal matrices.
Applying the matrix determinant lemma (also known as Sylvester's determinant identity), see e.g, ~\cite{HJ13,HJ94,GR01}, gives
\begin{equation}\label{Matrix determinant lemma}
\det(H)=\det(D - A C B^T) = \det(D)\det(\I_k - B^T D^{-1} A C),
\end{equation}
where $\I_k$ is the $k\times k$ identity matrix.

Next, we calculate the determinants of matrices $D$ and $M = \I_k-B^TD^{-1}AC$ in \eqref{Matrix determinant lemma}. The determinant of the diagonal block matrix $D$ is
\begin{equation}\label{Det(D)}
    \det(D) =\prod_{i=1}^k \det(e_1(X_{i+1}) \I_{p_i}) = \prod_{i=1}^k e_1(X_{i+1})^{p_i} = \prod_{i=1}^k e_1(X_i)^{p_{i-1}}.
\end{equation}
Observe that each $\mathbf{1}_{p_i}^T \widehat{\mathbf{x}}_i = e_1(X_i\setminus U_i)$, so the diagonal matrix $B^TD^{-1}A$ simplifies as 
\begin{align*}
B^TD^{-1}A &= \diag\left( \widehat{\mathbf{x}}_i^T (e_1(X_{i+1}) \I_{p_i})^{-1} \mathbf{1}_{p_i} \right)_{i=1}^k\\
&= \diag\left( \frac{e_1(X_1\setminus U_1)}{e_1(X_2)}, \frac{e_1(X_2\setminus U_2)}{e_1(X_3)}, \ldots, \frac{e_1(X_k\setminus U_k)}{e_1(X_1)} \right),
\end{align*}
and $M$ has the form
\[
M=
\begin{pmatrix}
1 & -\frac{e_1(X_1\setminus U_1)}{e_1(X_2)} & 0 & \cdots & 0 & 0 \\
0 & 1 & -\frac{e_1(X_2\setminus U_2)}{e_1(X_3)} & \cdots & 0 & 0 \\
\vdots & \vdots & \ddots & \ddots & \vdots & \vdots \\
0 & 0 & 0 & \cdots & 1 & -\frac{e_1(X_{k-1}\setminus U_{k-1})}{e_1(X_k)} \\
-\frac{e_1(X_k\setminus U_k)}{e_1(X_1)} & 0 & 0 & \cdots & 0 & 1
\end{pmatrix}.
\]
We can calculate $\det(M)$ by direct application of the Leibniz formula:
\[
\det(M)=\sum_{\sigma\in S_k}\sgn(\sigma)M_{1,\sigma(1)}\cdots M_{k,\sigma(k)},
\]
where $\sigma$ is a permutation of the elements of the symmetric group $S_k$ and $\sgn(\sigma)$ is the sign of $\sigma$.
Only two permutations correspond to nonzero summands. The identity $\diag(1,\ldots, 1)$ indexes the diagonal product $(1)^k$ and has sign $1$, and the $k$-cycle $j \mapsto j+1$ indexes the cyclic super-diagonal product 
$\prod_{j=1}^k \left(-\frac{e_1(X_j\setminus U_j)}{e_1(X_{j+1})}\right)$ 
and has sign $(-1)^{k-1}$. Thus 
\begin{equation}\label{det(Ik-MC)}
\det(M) = 1^k+(-1)^{k-1}\prod_{j=1}^k
\left(-\frac{e_1(X_j\setminus U_j)}{e_1(X_{j+1})}\right) 
= 1 - \frac{\prod_{j=1}^k e_1(X_j\setminus U_j)}{\prod_{j=1}^k e_1(X_j)}.
\end{equation}
Returning to \eqref{Matrix determinant lemma}, multiplying $\det(D)$ and $\det(M)$ yields
\begin{align*}
    \det (H)&= 
    \prod_{i=1}^k e_1(X_i)^{p_{i-1}} 
    \left( 1 - \frac{\prod_{j=1}^k e_1(X_j\setminus U_j)}{\prod_{j=1}^k e_1(X_j)} \right)\\
    &=
    \prod_{i=1}^k e_1(X_i)^{p_{i-1}-1} 
    \left( 
    \prod_{j=1}^k e_1(X_j) - \prod_{j=1}^k e_1(X_j\setminus U_j)
    \right).
\end{align*}
Substituting $w(\mathcal F_U)=\det (H)$ and $n_{i-1}-(N+1)=|X_{i-1}\setminus U_{i-1}|-1=p_{i-1}-1$ yields \eqref{FU calculation}.
\end{proof}

\section{Multisymmetric polynomials associated to eventually constant functions}
\label{Eventually constant functions}

    A $k$-tuple of cyclic functions $(f_1,\ldots,f_k)$ with $f_i:X_i\to X_{i+1}$ is \defn{eventually constant} if iterating the composition $f_k \cdots f_1: X_1 \to X_1$ eventually stabilizes to a constant map $(f_k\cdots f_1)^m$. We denote the set of all eventually constant $k$-tuples as $\EC$, and define the generating polynomial $P(\mathbf x)$ as
    \[
    P(\mathbf x) :=\sum_{(f_1,\ldots,f_k)\in\EC}w(f_1,\ldots,f_k)=w(\EC).
    \]

    Recall that a \defn{choice set} $U\subseteq X$ is a vertex subset with exactly $1$ element in each component $U\cap X_i$, denoted $u_i$.

\subsection{Choice set decomposition}
\label{Choice set decomposition}

We consider the subsets $\mathcal D_U$, $\mathcal C_U$, and $\mathcal F_U$, indexed over all choice sets $U$.

For each choice set $U$, $\mathcal C_U$ is the set of cyclic subgraphs of $\Gamma$ with vertex set $U$. Since $u_1$ is a single vertex, there is only one digraph in $\mathcal C_U$, the cycle along the vertices
\[u_1,\,u_2,\,\ldots,\,u_{k-1},\,u_k.\]
Thus, $\mathcal D_U$ is the set of functional subgraphs of $\Gamma$ with a unique cycle on $U$. The set $\mathcal F_U$ remains the set of subforests of $\Gamma$ rooted at $U$, which has weight
\[
w(\mathcal F_U) 
=  \prod_{i=1}^k e_1(X_i)^{n_{i-1}-2} 
\left(
\prod_{j=1}^k e_1(X_j) - \prod_{j=1}^k e_1(X_j\setminus \{u_j\})
\right)
\]
by Lemma \ref{Laplacian calculation}.

By Lemma \ref{Cyclic forest weight decomposition}, each weight $w(\mathcal D_U)$ factors by the weights of $\mathcal C_U$ and $\mathcal F_U$, giving
\begin{equation}\label{DU weight}
\begin{aligned}
     w(\mathcal D_U)&=|\mathcal C_U|\left(\prod_{i=1}^kx_{u_i}\right)w(\mathcal F_U)\\
     &=\left(
     \prod_{i=1}^kx_{u_i}\right)
     \left(\prod_{i=1}^k e_1(X_i)^{n_{i-1}-2}\right)
     \left(
     \prod_{j=1}^k e_1(X_j) - \prod_{j=1}^k e_1(X_j\setminus \{u_j\})
     \right).
\end{aligned}
\end{equation}

Next, we re-express $P(\mathbf x)$ as a sum over the weights $w(\mathcal D_U)$.

\begin{lemma}\label{EC cycle lemma}\label{ECU and FU bijection}
Ranging over all choice sets $U$, the classes $\mathcal D_U$ partition $\EC$.
\end{lemma}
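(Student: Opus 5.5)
We must show two things: every eventually constant $k$-tuple lies in $\mathcal D_U$ for exactly one choice set $U$, and every element of every $\mathcal D_U$ is eventually constant. Throughout, a $k$-tuple $(f_1,\ldots,f_k)$ is identified with its spanning functional subgraph $D$ of $\Gamma$, as in the background section.

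\emph{Disjointness.} The classes are pairwise disjoint because each functional digraph $D$ has a well-defined cyclic part $C(D)$. If $D\in\mathcal D_U\cap\mathcal D_{U'}$, then $U=V(C(D))=U'$.

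\emph{Each $\mathcal D_U$ lies in $\EC$.} Let $D\in\mathcal D_U$ with $U=\{u_1,\ldots,u_k\}$ a choice set, so $C(D)$ is the single cycle $u_1\to u_2\to\cdots\to u_k\to u_1$. Set $g:=f_k\cdots f_1$. Then $g(u_1)=u_1$, because following $D$ for $k$ steps from $u_1$ returns to $u_1$.

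For any $v\in X_1$, the trail from $v$ in $D$ reaches $V(C(D))=U$ after at most $|X\setminus U|$ steps and then stays on the cycle. Hence $f_D^m(v)$ lies in the cycle for all large $m$. Since $f_D^{km}(v)=g^m(v)$ lies in $X_1$, we get $g^m(v)\in U\cap X_1=\{u_1\}$ for large $m$. So $g^m$ is eventually the constant map with value $u_1$, and $D\in\EC$.

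\emph{Every element of $\EC$ lies in some $\mathcal D_U$.} Let $(f_1,\ldots,f_k)\in\EC$, with $g^m\equiv c$ constant for large $m$. By the cycle–forest decomposition, every cycle of $D$ is a cycle graph, so by Lemma~\ref{Cycle bijection lemma} its vertex set is an $N$-choice set for some $N\ge1$. In particular every cycle meets $X_1$, and $g$ acts on its intersection with $X_1$ as an $N$-cycle.

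Now $g$ permutes $V(C(D))\cap X_1$, and its eventual image $g^m(X_1)$ equals this set. Since $g^m$ is constant, the set is the singleton $\{c\}$. Therefore $D$ has exactly one cycle, and that cycle meets $X_1$ in one vertex, so $N=1$. Its vertex set $U$ is then a $1$-choice set, i.e.\ a choice set. Since $\mathcal C_U$ consists of the single $k$-cycle on $U$, we conclude $D\in\mathcal D_U$.

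\emph{Main obstacle.} The step needing care is the identification of the eventual image of $g$ with $V(C(D))\cap X_1$. A point $v\in X_1$ is $g$-periodic exactly when it is $f_D$-periodic, and $f_D$-periodic points are the cycle vertices. The eventual image of $g$ on the finite set $X_1$ consists precisely of its periodic points, which gives the required equality.
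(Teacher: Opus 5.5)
Your proof is correct and follows the same route as the paper. Disjointness comes from the uniqueness of $V(C(D))$, and $\EC=\bigcup_U\mathcal D_U$ comes from the correspondence between cycles of $D$ and cycles of $f_k\cdots f_1$ (the remark after Lemma~\ref{Cycle bijection lemma}); the only difference is that you prove this correspondence directly, by matching $g$-periodic and $f_D$-periodic points in $X_1$ and noting that every cycle of $\Gamma$ meets $X_1$, where the paper simply cites the remark.
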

\begin{proof}
Let $(f_1,\ldots,f_k)$ be a spanning functional subgraph of $\Gamma$ with induced graph $D$. Then, $D\in\EC$ if and only if $f_k\cdots f_1$ stabilizes to some singleton on $X_1$. By the remark following Lemma \ref{Cycle bijection lemma}, this occurs if and only if $D$ has a unique cycle whose vertex set is some choice set $U$, precisely if $D\in\mathcal D_U$. Therefore, $\EC=\bigcup_U\mathcal D_U$.

The sets $\mathcal D_U$ are pairwise disjoint since the cyclic part of any digraph $D$ has a unique vertex set $V(C(D))$. Thus, the sets induce a partition.
\end{proof}

By \eqref{DU weight} and the partition, we can express $P(\mathbf x)$ as the following sum over choice sets $U$
\begin{equation}\label{P(x) is a sum over choice sets}
\begin{aligned}
    P(\mathbf x)= \sum_U\left(\prod_{i=1}^kx_{u_i}\right)\left(\prod_{i=1}^k e_1(X_i)^{n_{i-1}-2}\right) 
    \left(
    \prod_{j=1}^k e_1(X_j) - \prod_{j=1}^k e_1(X_j\setminus \{u_j\})
    \right).
\end{aligned}
\end{equation}

\subsection{Multisymmetric polynomials of eventually constant functions}\label{The weight of eventually constant functions}

We now prove the following two results which are stated in Theorem~\ref{Theorem statement intro} and \eqref{eq_P_char}.

\begin{theorem}
\label{EC weight theorem}
The weighted enumeration of all eventually constant $k$-tuples is given by
\begin{equation}
\label{eqn_multisymm_eventually_const_k}
P(\mathbf{x}) 
= \prod_{i=1}^k e_1(X_i)^{n_{i-1}-2}
\left(
\prod_{j=1}^k e_1(X_j)^2 - 2^k\prod_{j=1}^k e_2(X_j)
\right).
\end{equation}
\end{theorem}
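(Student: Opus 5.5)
Starting from \eqref{P(x) is a sum over choice sets}, the plan is to factor out $\prod_{i=1}^k e_1(X_i)^{n_{i-1}-2}$, which does not depend on $U$. It then remains to evaluate
\[
S:=\sum_U \left(\prod_{i=1}^k x_{u_i}\right)\left(\prod_{j=1}^k e_1(X_j) - \prod_{j=1}^k e_1(X_j\setminus\{u_j\})\right),
\]
where $U$ ranges over choice sets, that is, over all tuples $(u_1,\ldots,u_k)\in X_1\times\cdots\times X_k$. I will show that $S=\prod_j e_1(X_j)^2-2^k\prod_j e_2(X_j)$.

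The key observation is that a choice set is an independent choice of one vertex from each component. Consequently, any summand that factors as a product over $i$ of functions of $u_i$ alone has a sum that factors as a product over $i$ of single-component sums.

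\emph{First term.} The factor $\prod_j e_1(X_j)$ is constant in $U$, and $\sum_U \prod_i x_{u_i}=\prod_i \sum_{u\in X_i}x_u=\prod_i e_1(X_i)$. This term therefore contributes $\prod_j e_1(X_j)^2$.

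\emph{Second term.} The summand is $\prod_i x_{u_i}e_1(X_i\setminus\{u_i\})$, which factors componentwise. Hence
\[
\sum_U \prod_{i=1}^k x_{u_i}\,e_1(X_i\setminus\{u_i\})=\prod_{i=1}^k\sum_{u\in X_i}x_u\sum_{v\in X_i,\,v\neq u}x_v .
\]
Each inner double sum runs over ordered pairs of distinct vertices of $X_i$. Every unordered pair $\{u,v\}$ is counted twice, so the double sum equals $2e_2(X_i)$. This term therefore contributes $2^k\prod_i e_2(X_i)$.

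Subtracting gives the claimed expression for $S$, and multiplying back by the prefactor yields \eqref{eqn_multisymm_eventually_const_k}.

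The only step needing care is the interchange of the sum over choice sets with the product over components, which relies on choice sets being in bijection with $X_1\times\cdots\times X_k$. The rest is bookkeeping. If some $n_i=1$, the exponent $n_{i-1}-2$ could be negative. In that case I would read the identity as it comes out of Lemma~\ref{Laplacian calculation}, where the product still makes sense as a formal expression. A more robust alternative is to avoid dividing by $e_1$ and keep the combined form throughout.
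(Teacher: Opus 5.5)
Your proposal is correct and matches the paper's proof step for step. The paper likewise pulls the $U$-independent factor $\prod_{i} e_1(X_i)^{n_{i-1}-2}$ out of the sum over choice sets, uses the product structure of choice sets to factor each sum componentwise, and evaluates $\sum_{u\in X_i}x_u e_1(X_i)=e_1(X_i)^2$ and $\sum_{u\in X_i}x_u e_1(X_i\setminus\{u\})=2e_2(X_i)$. Your caveat about $n_{i-1}=1$ is harmless: the identity holds as rational functions, and in that case $e_2(X_{i-1})=0$, so the right-hand side reduces to the polynomial $\prod_i e_1(X_i)^{n_{i-1}}$.
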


\begin{proof}
First, we distribute the cycle weights in the choice set-sum \eqref{P(x) is a sum over choice sets}:
\begin{equation}\label{Choice set forest weight decomposition}
    \begin{aligned}
    P(\mathbf{x}) &=\prod_{i=1}^k e_1(X_i)^{n_{i-1}-2} \sum_U \left(
    \prod_{j=1}^k x_{u_j} e_1(X_j) -\prod_{j=1}^k x_{u_j} e_1(X_j\setminus \{u_j\})
    \right).
\end{aligned}
\end{equation}

By distributivity, we can re-express the sums over choice sets as
\[
\sum_U 
\prod_{j=1}^k x_{u_j} e_1(X_j) 
= \prod_{j=1}^k\sum_{u\in X_j}x_ue_1(X_j), \quad
\sum_U \prod_{j=1}^k x_{u_j} e_1(X_j\setminus \{u_j\}) 
= \prod_{j=1}^k\sum_{u\in X_j} x_{u} e_1(X_j \setminus \{u\}).
\]
Next, we reduce the resulting factors to elementary symmetric polynomials 
\[
\sum_{u \in X_i} x_u e_1(X_i) = e_1(X_i)^2,\quad\text{and}\quad \sum_{u \in X_i} x_u e_1(X_i\setminus \{u\}) = \sum_{u \in X_i} \sum_{\substack{v \in X_i \\ v \neq u}} x_u x_v=2e_2(X_i).
\]
Substituting back into \eqref{Choice set forest weight decomposition} yields the result.
\end{proof}

\begin{corollary}
\label{rep_theory_formulation}
Let $G=\prod_{i=1}^k \GL(n_i)$ and for each $i$, let $V_i\cong \mathbb{C}^{n_i}$ denote the fundamental representation
of $\GL(n_i)$. Then $P(\mathbf x)$ can be written
as the virtual character 
\[
P(\mathbf{x})
=
\chi\!\left(
\bigotimes_{i=1}^k V_i^{\otimes n_{i-1}}
-
2^k
\bigotimes_{i=1}^k
\left(
V_i^{\otimes (n_{i-1}-2)}\otimes \Lambda^2V_i
\right)
\right),
\]
with indices again understood mod k.
\end{corollary}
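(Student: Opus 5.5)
The plan is to start from the closed form of Theorem~\ref{EC weight theorem} and translate each factor into a character via the standard dictionary between symmetric polynomials and $\GL$-characters. For $G=\prod_{i=1}^k\GL(n_i)$ we evaluate characters on the maximal torus of diagonal tuples $(\diag(x_v)_{v\in X_1},\ldots,\diag(x_v)_{v\in X_k})$. Two facts are standard. First, the character of the fundamental representation $V_i$ is $e_1(X_i)=\sum_{v\in X_i}x_v$. Second, the character of $\Lambda^jV_i$ is $e_j(X_i)$: the wedge products of $j$ distinct basis vectors form a weight basis with weights $\prod_{v\in S}x_v$, $|S|=j$.

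We also need that characters turn tensor products into products and direct sums into sums. We extend $\chi$ additively to virtual representations, i.e.\ formal $\Z$-linear combinations in the representation ring. For external tensor products of representations of the factors $\GL(n_i)$, the character is the product of the characters, each in its own set of variables.

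The computation then proceeds in two steps.

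\begin{enumerate}
\item Distribute the prefactor $\prod_i e_1(X_i)^{n_{i-1}-2}$ across the bracket. The first term becomes $\prod_i e_1(X_i)^{n_{i-1}}$, which is $\chi\bigl(\bigotimes_i V_i^{\otimes n_{i-1}}\bigr)$.
\item The second term becomes $2^k\prod_i e_1(X_i)^{n_{i-1}-2}e_2(X_i)$, which is $2^k\,\chi\bigl(\bigotimes_i V_i^{\otimes(n_{i-1}-2)}\otimes\Lambda^2V_i\bigr)$.
\end{enumerate}

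Subtracting the second expression from the first gives the stated virtual character, with indices read mod $k$ as in Theorem~\ref{EC weight theorem}.

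The only point needing care is the sign of the exponent $n_{i-1}-2$, since a negative exponent would make the tensor power meaningless. I would note that the formula is understood when $n_{i-1}\ge 2$ for all $i$. Alternatively, one can observe that the combined expression $\prod_i e_1(X_i)^{n_{i-1}}-2^k\prod_i e_1(X_i)^{n_{i-1}-2}e_2(X_i)$ is a genuine polynomial. In degenerate cases where some $n_{i-1}<2$, we have $e_2(X_{i-1})=0$ whenever $n_{i-1}=1$, so the second term vanishes and we interpret it as $0$.

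Beyond this bookkeeping, the proof is a direct transcription, and I expect no real obstacle.
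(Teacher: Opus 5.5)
Your proposal is correct and follows essentially the same route as the paper: both substitute $e_1(X_i)=\chi(V_i)$ and $e_2(X_i)=\chi(\Lambda^2V_i)$ into Theorem~\ref{EC weight theorem} and use that characters are multiplicative under (external) tensor products to combine the tensor powers. Your extra remark on the degenerate case $n_{i-1}=1$, where the factor $\Lambda^2V_{i-1}=0$ kills the second term, goes slightly beyond the paper's proof; the paper handles this case only in the remark that follows the corollary.
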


\begin{proof}
From Theorem~\ref{EC weight theorem}, we have 
\[
P(\mathbf{x})
=
\prod_{i=1}^k e_1(X_i)^{\,n_{i-1}-2}
\left(
\prod_{j=1}^k e_1(X_j)^2
-
2^k\prod_{j=1}^k e_2(X_j)
\right).
\]
We use the standard identities
\[
e_1(X_j) = \chi(V_j),\quad
e_1(X_j)^2 = \chi(V_j^{\otimes 2}),\quad 
\mbox{ and } \quad 
e_2(X_j) = \chi(\Lambda^2V_j)
\]
to obtain 
\[
P(\mathbf{x})
=
\chi\!\left(
\bigotimes_{i=1}^k V_i^{\otimes (n_{i-1}-2)}
\otimes
\left(
\bigotimes_{i=1}^k V_i^{\otimes 2}
-
2^k \bigotimes_{i=1}^k \Lambda^2V_i 
\right)
\right).
\]
Combining tensor powers yields
\[
P(\mathbf{x})
=
\chi\!\left(
\bigotimes_{i=1}^k V_i^{\otimes n_{i-1}}
-
2^k
\bigotimes_{i=1}^k
\left(
V_i^{\otimes (n_{i-1}-2)}\otimes \Lambda^2V_i
\right)
\right),
\]
as claimed.
\end{proof}

\begin{remark}
If any $n_j=1$ then all $k$-tuples $(f_1,\dots, f_k)$ are eventually constant. Furthermore, $\prod_{i=1}^k e_2(X_i) = 0$, and $P(\mathbf{x})$ can be expressed with non-negative coefficients 
\[
P(\mathbf{x}) = \prod_{i=1}^k e_1(X_i)^{n_{i-1}}.
\]
Also, $V_j \cong \mathbb{C}$ and hence $\Lambda^2 V_j = 0$, so
\[
P(\mathbf{x})=\chi\!\left(\bigotimes_{i=1}^k V_i^{\otimes n_{i-1}}\right).
\]
\end{remark}

\subsection{Enumeration of eventually constant functions}
\label{The cardinality of eventually constant functions}

In this section, we enumerate the number of eventually constant functions for a $k$-tuple.

\begin{proposition}\label{EC Cardinality}
The cardinality of $\EC$ is
\[ \#\EC = 
\prod_{i=1}^k n_i^{n_{i-1}-1} 
\left(
\prod_{j=1}^k n_j - \prod_{j=1}^k (n_j-1)
\right). 
\]
\end{proposition}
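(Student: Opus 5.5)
The plan is to specialize Theorem~\ref{EC weight theorem} by setting every variable $x_u=1$. Under this substitution each weight $w(f_1,\ldots,f_k)$ becomes $1$, so $P(\mathbf x)$ evaluates to $\#\EC$. It then suffices to evaluate the right-hand side of \eqref{eqn_multisymm_eventually_const_k} at $\mathbf x=\mathbf 1$.

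At $\mathbf x=\mathbf 1$ we have $e_1(X_i)=n_i$ and $e_2(X_i)=\binom{n_i}{2}=\tfrac{n_i(n_i-1)}{2}$. In particular $2e_2(X_i)=n_i(n_i-1)$, so the factor $2^k\prod_j e_2(X_j)$ specializes to $\prod_j n_j(n_j-1)$. This gives
\[
\#\EC=\prod_{i=1}^k n_i^{n_{i-1}-2}\left(\prod_{j=1}^k n_j^2-\prod_{j=1}^k n_j(n_j-1)\right).
\]
Now pull a common factor $\prod_j n_j$ out of the bracket and absorb one power of each $n_i$ into the prefactor. This turns $n_i^{n_{i-1}-2}$ into $n_i^{n_{i-1}-1}$, and the bracket becomes $\prod_j n_j-\prod_j(n_j-1)$, which is the claimed formula.

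The only delicate point is the exponent $n_{i-1}-2$, which is negative when $n_{i-1}=1$, so the specialization must be read as an identity of Laurent polynomials. The cleanest fix is to run the same specialization one step earlier, at the choice-set sum \eqref{P(x) is a sum over choice sets}. There we can use the form from Lemma~\ref{Laplacian calculation} with $N=1$, where the prefactor $\prod_i e_1(X_i)^{p_{i-1}-1}$ multiplies $\prod_j e_1(X_j)-\prod_j e_1(X_j\setminus U_j)$. At $\mathbf x=\mathbf 1$, each of the $\prod_j n_j$ choice sets then contributes
\[
\prod_i n_i^{n_{i-1}-2}\left(\prod_j n_j-\prod_j(n_j-1)\right).
\]
When some $n_{i-1}=1$ the second product vanishes, so the total is a genuine polynomial identity. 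Summing over the choice sets recovers the formula directly, which I would present as the main argument, with the specialization of Theorem~\ref{EC weight theorem} as a check.
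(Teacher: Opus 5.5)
Your proof is correct and follows the paper's argument: the paper likewise obtains the count by setting every $x_v=1$ in Theorem~\ref{EC weight theorem}, and your factoring of $\prod_j n_j$ out of the bracket is exactly the computation needed. The negative-exponent concern is harmless, because the identity holds once the $e_1(X_i)$ are inverted and evaluation at $\mathbf 1$ sends $e_1(X_i)$ to $n_i\neq 0$. Your choice-set variant carries the same formal exponent $p_{i-1}-1=n_{i-1}-2$, so it is a correct reorganization rather than a genuine fix.
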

The proof of the above proposition is a straightforward computation using Theorem \ref{EC weight theorem}, evaluating each variable $x_v$ at 1.


\begin{corollary}
The number of $k$-tuples $(f_1,\ldots,f_k)$ that are eventually constant and stabilize to any fixed choice set is
\[ 
\prod_{i=1}^k n_i^{n_{i-1}-2} 
\left(\prod_{j=1}^k n_j - \prod_{j=1}^k (n_j-1)\right).
\]
\end{corollary}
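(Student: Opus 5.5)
The plan is to specialize the per-class weight computation of Section~\ref{Choice set decomposition} to a single choice set. By Lemma~\ref{EC cycle lemma}, the eventually constant $k$-tuples whose cyclic part lives on a fixed choice set $U=\{u_1,\ldots,u_k\}$ are exactly the elements of $\mathcal D_U$. Since the composition $f_k\cdots f_1$ then stabilizes to the constant map with value $u_1$, and $U$ is determined by $u_1$ via $u_{i+1}=f_i(u_i)$, ``stabilizing to the fixed choice set $U$'' means membership in $\mathcal D_U$. So the quantity to compute is $\#\mathcal D_U$, obtained by evaluating $w(\mathcal D_U)$ at $x_v=1$ for all $v\in X$.

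Next I would use the explicit formula \eqref{DU weight}. It gives
\[
w(\mathcal D_U)=\Big(\prod_{i=1}^kx_{u_i}\Big)\Big(\prod_{i=1}^k e_1(X_i)^{n_{i-1}-2}\Big)\Big(\prod_{j=1}^k e_1(X_j)-\prod_{j=1}^k e_1(X_j\setminus\{u_j\})\Big).
\]
This formula came from Lemma~\ref{Cyclic forest weight decomposition} with $|\mathcal C_U|=1$, together with Lemma~\ref{Laplacian calculation} for $N=1$. Setting every $x_v=1$ sends $\prod_i x_{u_i}\mapsto 1$, $e_1(X_i)\mapsto n_i$ and $e_1(X_j\setminus\{u_j\})\mapsto n_j-1$. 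This yields exactly
\[
\prod_i n_i^{n_{i-1}-2}\Big(\prod_j n_j-\prod_j(n_j-1)\Big).
\]
As a sanity check, summing over the $\prod_j n_j$ choice sets recovers Proposition~\ref{EC Cardinality}, because the count is independent of $U$.

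The only delicate point is the evaluation step when some $n_{i-1}=1$, where the exponent $n_{i-1}-2$ is negative. In that case I would note that the identity in Lemma~\ref{Laplacian calculation} is an identity of rational functions. The factor $\prod_j e_1(X_j)-\prod_j e_1(X_j\setminus U_j)$ then absorbs the denominator: $X_{i-1}\setminus U_{i-1}=\emptyset$, so the second product vanishes. The specialization at $x_v=1$ is therefore still valid, since all the $e_1(X_i)$ evaluate to nonzero integers $n_i$. Otherwise the argument is a direct substitution, and I expect no real obstacle.
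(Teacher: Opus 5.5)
Your argument is correct, but it takes a slightly different route from the paper's. The paper argues by symmetry that $\#\mathcal D_U$ is the same for every choice set $U$, and then divides the total $\#\EC$ from Proposition~\ref{EC Cardinality} by the number $\prod_j n_j$ of choice sets. You instead specialize the per-class formula \eqref{DU weight} at $x_v=1$ directly.

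In your route, independence from $U$ comes out of the computation rather than being assumed. The paper's ``by symmetry'' implicitly uses that relabelling vertices within each $X_i$ is an automorphism of $\Gamma$ that permutes the classes $\mathcal D_U$ transitively. Your argument is also logically prior to Proposition~\ref{EC Cardinality}, which then follows by summing over $U$, exactly as in your sanity check. The paper's route is shorter once the total count is known.

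Your remark on the case $n_{i-1}=1$ is correct and worth keeping. There the second product in the bracket vanishes, so the bracket reduces to $\prod_j e_1(X_j)$, which cancels each negative power $e_1(X_i)^{-1}$; hence specializing at $x_v=1$ is legitimate. The paper's one-line proof passes over this point, even though the stated formula then contains a factor $n_i^{-1}$.
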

\begin{proof}
By symmetry, the cardinality of the set $\mathcal{D}_U$ is identical for each choice set $U$. Dividing the total cardinality $\#\EC$ in Proposition \ref{EC Cardinality} by the number of choice sets, $\prod_{i=1}^k n_i$, yields the result.
\end{proof}

\begin{corollary}\label{Uniform EC Cardinality}
If all components $X_i$ are of equal cardinality $|X_i|:= n$, then the number of eventually constant $k$-tuples on $X$ becomes
\[ 
n^{k(n-1)} \left(n^k - (n-1)^k\right).
\]
\end{corollary}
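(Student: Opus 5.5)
This is a direct specialization of Proposition~\ref{EC Cardinality}. Set $n_i=n$ for every $i=1,\ldots,k$; in particular $n_{i-1}=n$ for every $i$, including $i=1$, since indices are read modulo $k$.

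The prefactor becomes
\[
\prod_{i=1}^k n_i^{n_{i-1}-1}=\prod_{i=1}^k n^{n-1}=n^{k(n-1)}.
\]
The bracketed factor becomes
\[
\prod_{j=1}^k n_j-\prod_{j=1}^k (n_j-1)=n^k-(n-1)^k.
\]
Multiplying the two gives $n^{k(n-1)}\bigl(n^k-(n-1)^k\bigr)$, which is the claimed count.

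The only point needing care is the cyclic indexing in the exponent $n_{i-1}$: for $i=1$ it means $n_k$. When all cardinalities are equal this distinction disappears, so there is no real obstacle.

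As a sanity check, take $k=1$. The formula gives $n^{n-1}(n-(n-1))=n^{n-1}$, which is Cayley's count of eventually constant endomorphisms recalled in the introduction.
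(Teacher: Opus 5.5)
Your proposal is correct and takes the same route as the paper, which states this corollary without separate proof as the immediate specialization $n_i=n$ of Proposition~\ref{EC Cardinality}, giving $n^{k(n-1)}\bigl(n^k-(n-1)^k\bigr)$. Your remark on the cyclic index $n_{i-1}$ and your $k=1$ check against Cayley's count $n^{n-1}$ are both correct.
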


\begin{corollary}\label{Uniform EC asymptotic}
For each integer $n \ge 1$, let $p_k(n)$ denote the probability that a  $k$-tuple of functions between $k$ sets $X_1,\ldots,X_k$ of cardinality $|X_i|=n$ is eventually constant. Then
\begin{equation}\label{Exact EC probability}
     p_k(n) =1 - \left(1 - \frac{1}{n}\right)^k.
\end{equation}
It follows that 
\begin{equation}\label{limit EC probability}
    p_k(n) = \frac{k}{n} + \mathcal O\left(\frac{1}{n^2}\right).
\end{equation}
\end{corollary}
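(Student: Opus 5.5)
The plan is to divide the count of Corollary~\ref{Uniform EC Cardinality} by the total number of $k$-tuples, simplify, and then expand. When every $|X_i|=n$, each map $f_i:X_i\to X_{i+1}$ can be chosen in $n^n$ ways, so there are $n^{kn}$ $k$-tuples in total. Corollary~\ref{Uniform EC Cardinality} says that $n^{k(n-1)}\left(n^k-(n-1)^k\right)$ of these are eventually constant, which gives
\[
p_k(n)=\frac{n^{k(n-1)}\left(n^k-(n-1)^k\right)}{n^{kn}}=\frac{n^k-(n-1)^k}{n^k}=1-\left(1-\frac1n\right)^k .
\]
This is \eqref{Exact EC probability}.

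For the asymptotic statement \eqref{limit EC probability}, I will treat $k$ as fixed and expand $(1-1/n)^k$ with the binomial theorem:
\[
1-\left(1-\frac1n\right)^k=\frac{k}{n}-\sum_{j=2}^{k}\binom{k}{j}\left(-\frac1n\right)^j .
\]
Since $n\ge 1$, each term in the sum has absolute value at most $\binom{k}{j}n^{-2}$. The whole remainder is therefore bounded by $2^k/n^2$, which is $\mathcal O(1/n^2)$ for fixed $k$.

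There is no real obstacle. The only points needing care are the total count $n^{kn}$ and the fact that the implied constant depends on $k$. I will also check the edge case $n=1$ for consistency: there every tuple is eventually constant, and indeed $p_k(1)=1$.
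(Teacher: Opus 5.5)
Your proposal is correct and follows the paper's proof exactly: divide the count $n^{k(n-1)}\bigl(n^k-(n-1)^k\bigr)$ from Corollary~\ref{Uniform EC Cardinality} by the total $n^{kn}$, simplify, and expand $(1-1/n)^k$ by the binomial theorem. Your explicit remainder bound $2^k/n^2$ adds one small thing the paper leaves implicit: it shows that the implied constant in $\mathcal O(1/n^2)$ depends on $k$.
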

\begin{proof}
Fix any integer $n \ge 1$ and sets $X_1,\ldots,X_k$ of cardinality $|X_i| = n$. The total number of $k$-tuples $(f_1,\ldots,f_k)$ of functions with $f_i: X_i \to X_{i+1}$ is $\prod_{i=1}^k n^n = n^{kn}$. Dividing the number of eventually constant $k$-tuples calculated in Corollary~\ref{Uniform EC Cardinality} by this total yields \eqref{Exact EC probability}:
\[ p_k(n) = \frac{n^{k(n-1)} \big(n^k - (n-1)^k\big)}{n^{kn}} = \frac{n^k - (n-1)^k}{n^k} = 1 - \left(1 - \frac{1}{n}\right)^k. \]
Expansion using the binomial theorem gives \eqref{limit EC probability}:
\[ 1-\left(1 - \frac{1}{n}\right)^k = 1-\left(1 - \frac{k}{n} + \mathcal O\left(\frac{1}{n^2}\right)\right)= \frac{k}{n} + \mathcal O\left(\frac{1}{n^2}\right). \]
\end{proof}

For $k=1$, this recovers \eqref{k=1_EC_prob_intro}, the classical probability recently highlighted in \cite{Lei21} as a set-theoretic analogue to nilpotent linear operators.

As expected, if we fix $n \ge 1$ instead, then the chance of picking an eventually constant $k$-tuple grows as $k$ does, approaching $1$ as $k\to\infty$.

Finally, if we vary both parameters $k$ and $n$ together, we recover the exponential function. For any fixed positive constant $c\in\mathbb N$, we see
\[
\lim_{n\to\infty}p_{cn}(n)=1-\lim_{n\to\infty}\left(\left(1-\frac{1}{n}\right)^{cn}\right)=1-e^{-c}.
\]

\subsection{Eventually constant pairs and triples}
\label{Specializations: k=2 and k=3}

In this section, we discuss special cases when $k=2,3$.

\begin{example}
    Suppose $k=2$ and that $X_1,\,X_2$ are disjoint sets of cardinality $n_1$ and $n_2$. Then, an eventually constant pair $(f, g)$ consists of maps $f: X_1 \to X_2$ and $g: X_2 \to X_1$ such that some iterate $(gf)^m: X_1 \to X_1$ stabilizes to a constant map on $X_1$, terminating at some point $v_1 \in X_1$. In this case, the digraph $\Gamma$ reduces to the directed bipartite graph between $X_1$ and $X_2$.

We can visualize the pair $(f,g)$ by its induced bipartite digraph $D$. As shown in Lemma \ref{EC cycle lemma}, the collapse to a constant function ensures that $D$ is weakly connected and it contains a unique $2$-cycle between $v_1$ and $f(v_1)$. Removing the edges of this cycle yields a spanning in-forest of $\Gamma$ rooted at $\{v_1, f(v_1)\}$. See Figure~\ref{fig_00001}.

\begin{figure}
    \centering
\begin{tikzpicture}[scale=0.6,decoration={
    markings,
    mark=at position 0.35 with {\arrow{>}}}]

\begin{scope}[shift={(0,-2)}]


    \node at (2,2.75) {$g$};

    \draw[thick, postaction = {decorate}] (4,2) .. controls (3.5,2.5) and (0.5,2.5) .. (0,2);

    \draw[thick, postaction = {decorate}] (0,2) .. controls (0.5,1.25) and (3.5,1.25) .. (4,2);

    \node at (2,1.00) {$f$};

    \draw[thick, fill] (0.25,2) arc (0: 360: 1.75 mm);

    \node at (-0.65,2) {$x_1$};

    \draw[thick, fill] (4.25,2) arc (0: 360: 1.75 mm);

    \node at (4.75,2) {$y_1$};

\end{scope}


\begin{scope}[shift={(12,0)}]

\begin{scope}[shift={(0,2)}]

    \node at (3.00,3.45) {$g$};

    \draw[thick, postaction = {decorate}] (4,3) .. controls (3.5,3.35) and (0.5,3.10) .. (0,0);

    \draw[thick, fill] (4.25,3) arc (0: 360: 1.75 mm);

    \node at (4.75,3) {$y_1$};

\end{scope}

\begin{scope}[shift={(0,0)}]


    \draw[thick, postaction = {decorate}] (0,2) .. controls (0.5,1.25) and (3.5,2.25) .. (4,3);

    \node at (2,2.35) {$f$};

    \draw[thick, postaction = {decorate}] (4,3) .. controls (3.5,4.35) and (0.5,4.10) .. (0,4);

    \node at (-0.25,4.35) {$g$};

    \draw[thick, postaction = {decorate}] (0,4) .. controls (-2,3.75) and (-3,-1) .. (0,-2);

    \draw[thick, fill] (0.25,2) arc (0: 360: 1.75 mm);

    \node at (-0.65,2) {$x_1$};

    \draw[thick, fill] (4.25,3) arc (0: 360: 1.75 mm);

    \node at (4.75,3) {$y_2$};
        
\end{scope}

\begin{scope}[shift={(0,-2)}]


    \draw[thick, postaction = {decorate}] (0,2) .. controls (0.5,1.25) and (3.5,2.25) .. (4,3);

    \node at (1.75,1.45) {$f$};

    \draw[thick, fill] (0.25,2) arc (0: 360: 1.75 mm);

    \node at (-0.65,2) {$x_2$};

    \draw[thick, fill] (4.25,3) arc (0: 360: 1.75 mm);

    \node at (4.75,3) {$y_3$};

    \draw[thick, postaction = {decorate}] (4,3) .. controls (3.5,3.5) and (0.5,3) .. (0,2);

    \node at (3.5,3.50) {$g$};

\end{scope}

\begin{scope}[shift={(0,-4)}]


    \draw[thick, postaction = {decorate}] (0,2) .. controls (0.5,1.55) and (3.5,2.25) .. (4,3);

    \node at (1.5,1.50) {$f$};

    \draw[thick, postaction = {decorate}] (4,3) .. controls (3.5,4.1) and (0.5,5) .. (0,4);

    \node at (3.65,3.90) {$g$};

    \draw[thick, fill] (0.25,2) arc (0: 360: 1.75 mm);

    \node at (-0.65,1.75) {$x_3$};

    \draw[thick, fill] (4.25,3) arc (0: 360: 1.75 mm);

    \node at (4.75,3) {$y_4$};

\end{scope}

\begin{scope}[shift={(0,-6)}]


    \node at (1.5,1.65) {$f$};

    \draw[thick,postaction={decorate}] (0,2) .. controls (0.5,2) and (3,2) .. (4,3);

    \draw[thick, postaction = {decorate}] (4,3) .. controls (3.5,3.25) and (0.5,5) .. (0,6);

    \node at (3.5,3.70) {$g$};

    \draw[thick, fill] (0.25,2) arc (0: 360: 1.75 mm);

    \node at (-0.65,1.75) {$x_4$};

    \draw[thick, fill] (4.25,3) arc (0: 360: 1.75 mm);

    \node at (4.75,3) {$y_5$};

\end{scope}

\end{scope}

\end{tikzpicture}

    \caption{Left: example of an eventually constant pair.
    Right: example of an eventually constant pair with $|X_1|=4$ and $|X_2|=5$.}
    \label{fig_00001}
\end{figure}

By Theorem~\ref{EC weight theorem} and Corollary~\ref{rep_theory_formulation}, the generating polynomial and the virtual character for the pairs, respectively, are
\begin{align*}
    P(\mathbf{x})&= e_1(X_1)^{n_2-2} e_1(X_2)^{n_1-2} \left( e_1(X_1)^2 e_1(X_2)^2 - 4 e_2(X_1)e_2(X_2) \right)\\
&=\chi\!\left(
\left(V_1^{\otimes (n_2-2)} \otimes V_2^{\otimes (n_1-2)}\right)
\otimes
\left(
V_1^{\otimes 2}\otimes V_2^{\otimes 2}
\;-\;
4\,\Lambda^2 V_1 \otimes \Lambda^2 V_2
\right)
\right).
\end{align*}

Applying Proposition~\ref{EC Cardinality}, we recover the cardinality of eventually constant pairs discovered in~\cite{CIKLR25,CILR25}:
\[
|(n_1, n_2)| = n_1^{n_2-1} n_2^{n_1-1}(n_1 +n_2-1), 
\]
where $\#\EC = |(n_1, n_2)|$.
\end{example}

\begin{example}
Let $k=3$, with $X_1, X_2, X_3$ finite disjoint sets of cardinalities $n_1, n_2, n_3$, respectively, and consider an eventually constant triple $(f,g,h)$ that collapses to a point $v_1\in X_1$. The digraph $\Gamma$ has directed edges from $X_1$ to $X_2$, $X_2$ to $X_3$, and $X_3$ to $X_1$, and $(f,g,h)$ induces a subgraph $D$ containing a unique $3$-cycle around $v_1,\,f(v_1),\,gf(v_1)$. Removing the edges of this cycle again yields an in-forest of $\Gamma$ rooted around this cycle. 
See Figure~\ref{fig_00002}.

\begin{figure}
    \centering
\begin{tikzpicture}[scale=0.6,decoration={
    markings,
    mark=at position 0.35 with {\arrow{>}}}]

\begin{scope}[shift={(0,-2)}]


    \node at (4,3.55) {$h$};

    \draw[thick, postaction = {decorate}] (8,2) .. controls (7,3.5) and (1,3.5) .. (0,2);

    \draw[thick, postaction = {decorate}] (0,2) .. controls (0.5,1.25) and (3.5,1.25) .. (4,1.5);

    \node at (2,0.90) {$f$};

    \draw[thick, fill] (0.25,2) arc (0:360: 1.75 mm);

    \node at (-0.65,2) {$x_1$};

    \draw[thick, fill] (4.25,1.5) arc (0:360: 1.75 mm);

    \node at (4.05,0.95) {$y_1$};

\begin{scope}[shift={(4,0)}]

    \draw[thick, postaction = {decorate}] (0,1.5) .. controls (0.5,1.25) and (3.5,1.25) .. (4,2);

    \node at (2.25,1.00) {$g$};

\end{scope}

    \draw[thick, fill] (8.25,2) arc (0: 360: 1.75 mm);

    \node at (8.75,2) {$z_1$};

\end{scope}


\begin{scope}[shift={(13,0)}]

\begin{scope}[shift={(0,2)}]

    \draw[thick, fill] (4.25,3) arc (0: 360: 1.75 mm);

    \node at (4.0,2.45) {$y_1$};

\end{scope}

\begin{scope}[shift={(0,0)}]


    \draw[thick, postaction = {decorate}] (0,2) .. controls (0.5,1.25) and (3.5,2.25) .. (4,3);

    \node at (2,2.35) {$f$};

    \draw[thick, postaction = {decorate}] (0,4.5) .. controls (-1.5,3.3) and (-2.5,-1) .. (0,-2);

    \draw[thick, fill] (0.25,2) arc (0: 360: 1.75 mm);

    \node at (-0.55,2) {$x_1$};

    \draw[thick, fill] (4.25,3) arc (0: 360: 1.75 mm);

    \node at (4.05,2.35) {$y_2$};
        
\end{scope}

\begin{scope}[shift={(0,-2)}]


    \draw[thick, postaction = {decorate}] (0,2) .. controls (0.5,1.25) and (3.5,2.25) .. (4,3);

    \node at (2,2.40) {$f$};

    \draw[thick, fill] (0.25,2) arc (0: 360: 1.75 mm);

    \node at (-0.55,2) {$x_2$};

    \draw[thick, fill] (4.25,3) arc (0: 360: 1.75 mm);

    \node at (4.05,2.35) {$y_3$};

\end{scope}

\begin{scope}[shift={(0,-4)}]


    \draw[thick, postaction = {decorate}] (0,2) .. controls (0.5,1.55) and (3.5,2.25) .. (4,3);

    \node at (2,2.50) {$f$};

    \draw[thick, fill] (0.25,2) arc (0: 360: 1.75 mm);

    \node at (-0.5,1.75) {$x_3$};

    \draw[thick, fill] (4.25,3) arc (0: 360: 1.75 mm);

    \node at (4.1,2.45) {$y_4$};

\end{scope}

\begin{scope}[shift={(0,-6)}]


    \node at (2,2.60) {$f$};

    \draw[thick,postaction={decorate}] (0,2) .. controls (0.5,2) and (3,2) .. (4,3);

    \draw[thick, fill] (0.25,2) arc (0: 360: 1.75 mm);

    \node at (-0.5,1.75) {$x_4$};

    \draw[thick, fill] (4.25,3) arc (0: 360: 1.75 mm);

    \node at (4.1,2.45) {$y_5$};

\end{scope}

\begin{scope}[shift={(4,0)}]


    \node at (2.5,3.80) {$g$};

    \draw[thick, postaction = {decorate}] (0,5) .. controls (0.5,4) and (3.5,3) .. (4,3);

    \node at (2,2) {$g$};

    \draw[thick,postaction={decorate}] (0,3) .. controls (0.5,2) and (3,1) .. (4,1);

    \draw[thick,postaction={decorate}] (4,3) .. controls (3,7) and (-2,6) .. (-4,4.5);

    \node at (3.4,5) {$h$};

    \draw[thick, fill] (4.25,3) arc (0: 360: 1.75 mm);

    \node at (4.65,2.8) {$z_1$};

\end{scope}

\begin{scope}[shift={(4,-2)}]


    \node at (2.1,1.9) {$g$};

    \draw[thick,postaction={decorate}] (0,3) .. controls (0.5,2) and (3,1) .. (4,1);

    \node at (1.1,-2.00) {$h$};

    \draw[thick,postaction={decorate}] (4,3) .. controls (6.25,2) and (6.25,-1) .. (4,-2.5);

    \node at (6.00,0.0) {$h$};

    \draw[thick,postaction={decorate}] (4,-2.5) .. controls (2,-4) and (-2,-4) .. (-4,-2);

    \draw[thick,postaction={decorate}] (4,1) .. controls (3,-4) and (-3,-3) .. (-4,-2);

    \draw[thick, fill] (4.25,3) arc (0: 360: 1.75 mm);

    \node at (4.65,3.25) {$z_2$};

\end{scope}

\begin{scope}[shift={(4,-4)}]


    \node at (1.8,2.95) {$g$};

    \draw[thick,postaction={decorate}] (0,3) .. controls (0.5,2.5) and (3,2.5) .. (4,3);

    \node at (2,1.85) {$g$};

    \draw[thick,postaction={decorate}] (0,1) .. controls (0.5,1) and (3,1.5) .. (4,3);

    \draw[thick, fill] (4.25,3) arc (0: 360: 1.75 mm);

    \node at (4.60,3.25) {$z_3$};

\end{scope}

\end{scope}

\end{tikzpicture}

    \caption{Left: example of an eventually constant triple.
    Right: example of an eventually constant triple with $|X_1|=4$, $|X_2|=5$, and $|X_3|=3$.}
    \label{fig_00002}
\end{figure}

By Theorem \ref{EC weight theorem}, the generating polynomial is 
\begin{align*}
P(\mathbf{x})&=e_1(X_1)^{n_3-2} e_1(X_2)^{n_1-2} e_1(X_3)^{n_2-2} \left( \prod_{i=1}^3 e_1(X_i)^2 - 8 \prod_{i=1}^3 e_2(X_i) \right),
\end{align*}
which can be expressed as 
\[\chi\!\left(
\left(
V_1^{\otimes (n_3-2)}
\otimes
V_2^{\otimes (n_1-2)}
\otimes
V_3^{\otimes (n_2-2)}
\right)
\otimes
\left(
V_1^{\otimes 2}\otimes V_2^{\otimes 2}\otimes V_3^{\otimes 2}
-
8\,\Lambda^2 V_1 \otimes \Lambda^2 V_2 \otimes \Lambda^2 V_3
\right)
\right)
\]
by Corollary~\ref{rep_theory_formulation}.
Using Proposition~\ref{EC Cardinality}, we obtain the enumeration for eventually constant triples:
\[ 
\#\EC = 
|(n_1, n_2, n_3)| = n_1^{n_3-1} n_2^{n_1-1} n_3^{n_2-1} (n_1 n_2 + n_2 n_3 + n_3 n_1 - (n_1 + n_2 + n_3) + 1).
\]
\end{example}

\section{Multisymmetric polynomials associated to eventually cyclic functions}
\label{Eventually cyclic functions}

Fix a positive integer $N\in\mathbb N$. A $k$-tuple $(f_1,\ldots,f_k)$ is \defn{eventually $N$-cyclic} if iterated images of composition $f_k\cdots f_1:X_1\to X_1$ eventually stabilize to a subset $(f_k\cdots f_1)^m(X_1)\subseteq X_1$ of cardinality $N$, and $f_k\cdots f_1$ acts as an $N$-cycle on this set. Let $\EC(N)$ be the set of eventually $N$-cyclic tuples, and let $P_N(\mathbf x)$ be the generating polynomial
    \[
    P_N(\mathbf x):=\sum_{D\in\EC(N)}w(D)=w(\EC(N)).
    \]
    Note that $\EC=\EC(1)$. 

Recall, an \defn{$N$-choice set} $U\subseteq X$ is a vertex subset that contains exactly $N$ elements in each component, denoted $U_i:=X_i\cap U$.

\subsection{\texorpdfstring{$N$}{N}-choice set decomposition}
\label{N-choice set decomposition}

To calculate $P_N(\mathbf x)$, we adapt the methods in Section~\ref{Choice set decomposition} for eventually constant functions. Generalizing, we shift from standard choice sets to $N$-choice sets $U$, and work with subsets $\mathcal C_U^{(N)}$ and $\mathcal D_U^{(N)}$ of $\mathcal C_U$ and $\mathcal D_U$. Then as before, we weigh each subset $\mathcal D_U^{(N)}$ by Lemma \ref{Cyclic forest weight decomposition} and partition the set $\EC(N)$ by $\mathcal D_U^{(N)}$.

We restrict the sets $\mathcal D_U$ and $\mathcal C_U$ to full cycles. Discarding disjoint unions of distinct cycles in $\mathcal C_U$, we consider only the subset $\mathcal C_U^{(N)}$ of (single) cycle subgraphs of $\Gamma$ on $U$. Accordingly, we define the subset $\mathcal D_U^{(N)}\subseteq\mathcal D_U$ of spanning $\Gamma$ subgraphs whose cyclic part belongs to $\mathcal C_U^{(N)}$, or equivalently, whose cyclic part is a (single) cycle on $U$.

Note that although $\mathcal D_U^{(N)}$ may be a proper subset of $\mathcal D_U$, Lemma \ref{Cycle bijection lemma} still allows us to calculate each weight $w(\mathcal D^{(N)}_U)$ in terms of the corresponding subset $\mathcal C_U^{(N)}\subseteq\mathcal C_U$ and $\mathcal F_U$:
\begin{equation}\label{DUN weight}
    w(\mathcal D_U^{(N)})=|\mathcal C_U^{(N)}|\left(\prod_{u\in U}x_u\right)w(\mathcal F_U).
\end{equation}

\begin{lemma}\label{Generalized cycle lemma}
Ranging over all $N$-choice sets $U$, the sets $\mathcal{D}_U^{(N)}$ partition $\EC(N)$.
\end{lemma}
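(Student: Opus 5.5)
The plan is to mirror the proof of Lemma~\ref{EC cycle lemma}: first show that each $f\in\EC(N)$ lies in some $\mathcal D_U^{(N)}$ and that each element of $\mathcal D_U^{(N)}$ lies in $\EC(N)$; then show that these classes are pairwise disjoint.

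For the first inclusion, let $(f_1,\ldots,f_k)\in\EC(N)$ with functional digraph $D$, a spanning subgraph of $\Gamma$ on $X$. The cyclic part $C(D)$ is a disjoint union of cycle graphs. Each of these meets $X_1$, because a cycle in $\Gamma$ must pass through every component. By Lemma~\ref{Cycle bijection lemma}, a cycle of $D$ with vertex set $W$ corresponds exactly to a cycle of $f_k\cdots f_1$ on $W_1=W\cap X_1$, of length $|W_1|$, and $W$ is a $|W_1|$-choice set.

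The iterated images of $f_k\cdots f_1$ stabilize to the set of its cyclic points. I will check this by noting that $(f_k\cdots f_1)^m=f_D^{km}$ restricted to $X_1$, and that $f_D^{km}(X)\cap X_1$ stabilizes to $V(C(D))\cap X_1$. Consequently, $f_k\cdots f_1$ has cyclic points exactly $V(C(D))\cap X_1$, and its cycles biject with the cycles of $D$.

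If $f\in\EC(N)$, the stable set has size $N$ and carries a single $N$-cycle. Hence $D$ has exactly one cycle, its vertex set $U$ is an $N$-choice set, and $C(D)\in\mathcal C_U^{(N)}$, so $D\in\mathcal D_U^{(N)}$. Conversely, if $D\in\mathcal D_U^{(N)}$, then $C(D)$ is a single cycle on the $N$-choice set $U$. The same correspondence then shows that $f_k\cdots f_1$ eventually acts as an $N$-cycle on $U_1$, with $|U_1|=N$, so $f\in\EC(N)$.

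Disjointness follows because $V(C(D))$ is uniquely determined by $D$, so $D$ lies in $\mathcal D_U^{(N)}$ for only one $U$.

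The main obstacle is making rigorous the claim that the cyclic points of $f_k\cdots f_1$ on $X_1$ are exactly $V(C(D))\cap X_1$, and that cycles correspond one-to-one. I will justify this directly. A point $u\in X_1$ is periodic for $f_k\cdots f_1$ if and only if it is periodic for $f_D$: if $f_D^t(u)=u$, then $k\mid t$ because each step advances the component index by one modulo $k$, so $u$ is periodic for $f_D^k=f_k\cdots f_1$; the converse is immediate. The $f_D$-orbit of a cyclic $u\in X_1$ meets $X_1$ in exactly its $(f_k\cdots f_1)$-orbit, which gives the bijection of cycles. This is precisely the remark after Lemma~\ref{Cycle bijection lemma}, so I can cite that remark together with the stabilization fact.
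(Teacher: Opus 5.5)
Your proposal is correct and follows essentially the same route as the paper: both reduce membership in $\EC(N)$ to $D$ having a unique cycle whose vertex set is an $N$-choice set, via the correspondence between cycles of $D$ and cycles of $f_k\cdots f_1$ noted after Lemma~\ref{Cycle bijection lemma}, and both obtain disjointness from the uniqueness of $V(C(D))$. The only difference is that you explicitly verify this periodic-point correspondence (using that $f_D^t(u)=u$ forces $k\mid t$), which the paper simply cites.
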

\begin{proof}
Let $D$ be a spanning functional subgraph of $\Gamma$ with corresponding $k$-tuple $(f_1,\ldots,f_k)$. Then, $D \in \EC(N)$ if and only if the cyclic part of its composition $f_k \cdots f_1$ is a single $N$-cycle on some subset $U_1\subseteq X_1$ of cardinality $N$. By remark following Lemma \ref{Cycle bijection lemma}, this occurs if and only if $D$ contains a unique cycle whose vertex set forms an $N$-choice set extending $U_1$, which holds if and only if $D\in\bigcup_U\mathcal{D}_U^{(N)}$.

This induces a partition since the sets $\mathcal{D}_U^{(N)}$ are pairwise disjoint.
\end{proof}

\begin{lemma}
\label{Cycle enumeration lemma}
The cardinality of $\mathcal C^{(N)}_U$ is
\[
|\mathcal{C}^{(N)}_U| = \frac{(N!)^k}{N}.
\]
\end{lemma}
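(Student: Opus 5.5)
We count the cycle subgraphs of $\Gamma$ on a fixed $N$-choice set $U$ by applying Lemma~\ref{Cycle bijection lemma}. That lemma identifies $\mathcal C^{(N)}_U$ with the set of $k$-tuples $(f_1,\ldots,f_k)$ such that each $f_i:U_i\to U_{i+1}$ is a bijection and the composition $f_k\cdots f_1:U_1\to U_1$ is an $N$-cycle. A cycle subgraph on $U$ is exactly the functional graph of such a tuple, because a cycle graph is functional and is determined by its endomorphism $f_C$. So $|\mathcal C^{(N)}_U|$ equals the number of such tuples, and the task reduces to counting them.

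We parametrize the tuples by their first $k-1$ maps. Choose the bijections $f_1,\ldots,f_{k-1}$ freely, which gives $(N!)^{k-1}$ choices. Set $g:=f_{k-1}\cdots f_1:U_1\to U_k$; it is a bijection. The last map $f_k:U_k\to U_1$ is then an arbitrary bijection subject to the constraint that $\sigma:=f_k\circ g$ is an $N$-cycle on $U_1$. The assignment $f_k\mapsto f_k\circ g$ is a bijection from the set of bijections $U_k\to U_1$ onto the symmetric group on $U_1$, with inverse $\sigma\mapsto\sigma\circ g^{-1}$. Hence, for each fixed choice of $f_1,\ldots,f_{k-1}$, the admissible $f_k$ correspond one-to-one with the $N$-cycles on the $N$-element set $U_1$.

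The number of $N$-cycles on an $N$-element set is $(N-1)!=N!/N$. Multiplying gives
\[
|\mathcal C^{(N)}_U|=(N!)^{k-1}\cdot\frac{N!}{N}=\frac{(N!)^k}{N}.
\]

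The one point needing care is that the identification with tuples is injective and surjective onto $\mathcal C^{(N)}_U$, and that no cycle is counted twice through a different labeling. Both hold because the tuple is just the restriction of $f_C$ to the components, and Lemma~\ref{Cycle bijection lemma} gives the equivalence in both directions. For $N=1$ the formula returns $1$, which agrees with the count in Section~\ref{Choice set decomposition}.
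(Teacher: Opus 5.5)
Your proof is correct and takes essentially the same route as the paper. After identifying $\mathcal C^{(N)}_U$ with tuples of bijections whose composite is an $N$-cycle via Lemma~\ref{Cycle bijection lemma}, the paper uses the global bijection $(f_1,\ldots,f_k)\mapsto(f_1,\ldots,f_{k-1},f_k\cdots f_1)$, which is exactly your fiberwise substitution $f_k\mapsto f_k\circ g$ with $g=f_{k-1}\cdots f_1$, giving $(N!)^{k-1}(N-1)!=(N!)^k/N$.
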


\begin{proof}
By Proposition~\ref{Cycle bijection lemma}, $\mathcal{C}^{(N)}_U$ identifies with the set of $k$-tuples $(f_1, \ldots,f_k)$, such that each map $f_i: U_i \to U_{i+1}$ is a bijection and $f_k\cdots f_1:U_1\to U_1$ is an $N$-cycle. 

By right-composition on the final term,
\[
(f_1,\ldots,f_k)\mapsto(f_1,\ldots,f_{k-1},f_k\cdots f_1),
\]
bijects $\mathcal C_U^{(N)}$ with the $k$-tuples $(f_1,\ldots,f_{k-1},g)$ for which each $f_i:U_i\to U_{i+1}$ is a bijection and $g:U_1\to U_1$ is an $N$-cycle. There are $N!$ such bijections $f_i:U_i\to U_{i+1}$ for each component $i=1,\ldots,k-1$, and there are $(N-1)!$ such $N$-cycles on $U_1$.
\end{proof}

By the partition over $\mathcal D_U^{(N)}$ in Lemma \ref{Generalized cycle lemma}, the cardinality in Lemma~\ref{Cycle enumeration lemma}, and \eqref{DUN weight}, 
we obtain
\begin{equation}
\label{Sum over m-choice sets}
\begin{aligned}
        P_N(\mathbf x)&=\sum_U w(\mathcal D_U^{(N)})=\frac{(N!)^k}{N}\sum_U \left(\prod_{u\in U}x_u\right)w(\mathcal F_U).
\end{aligned}
\end{equation}

\subsection{Multisymmetric polynomials of eventually cyclic functions}
\label{subsection_wt_eventually_cyclic_fns}

\begin{theorem}\label{Weight of eventually cyclic functions}
The weighted enumeration of all eventually $N$-cyclic $k$-tuples is given by
\[
P_N(\mathbf{x}) 
= \frac{(N!)^k}{N} 
\prod_{i=1}^k e_1(X_i)^{n_{i-1}-(N+1)}
\left(
\prod_{j=1}^k e_1(X_j)e_N(X_j) - (N+1)^k\prod_{j=1}^k e_{N+1}(X_j)
\right).
\]
\end{theorem}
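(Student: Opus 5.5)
Starting from the $N$-choice set expansion \eqref{Sum over m-choice sets}, I would substitute the forest weight from Lemma~\ref{Laplacian calculation}. The prefactor $\prod_{i=1}^k e_1(X_i)^{n_{i-1}-(N+1)}$ does not depend on $U$, so it comes outside the sum. This gives
\[
P_N(\mathbf x)=\frac{(N!)^k}{N}\prod_{i=1}^k e_1(X_i)^{n_{i-1}-(N+1)}\sum_U\left(\prod_{j=1}^k x^{U_j}e_1(X_j)-\prod_{j=1}^k x^{U_j}e_1(X_j\setminus U_j)\right),
\]
where $x^{U_j}:=\prod_{u\in U_j}x_u$ and $\prod_{u\in U}x_u=\prod_j x^{U_j}$. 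The remaining task is to evaluate the two sums over $U$.

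An $N$-choice set is the same as an independent choice of $N$-subsets $U_j\subseteq X_j$, one for each $j$. By distributivity, each sum over $U$ therefore factors as a product over $j$ of sums over $N$-subsets $U_j\subseteq X_j$, exactly as in the proof of Theorem~\ref{EC weight theorem}.

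In the first sum each factor is $\sum_{|U_j|=N}x^{U_j}e_1(X_j)=e_N(X_j)e_1(X_j)$. For the second sum the key identity is
\[
\sum_{\substack{S\subseteq X_j\\|S|=N}}x^{S}\,e_1(X_j\setminus S)=(N+1)\,e_{N+1}(X_j).
\]
This follows by expanding the left side as a sum over pairs $(S,v)$ with $v\notin S$, each contributing the monomial $x^{S\cup\{v\}}$. Every $(N+1)$-subset $T$ arises from exactly $N+1$ such pairs, one for each choice of $v\in T$. Substituting both evaluations gives the stated formula.

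The main obstacle is only bookkeeping: checking that the exponent and prefactor in Lemma~\ref{Laplacian calculation} really are independent of $U$. They are, since $|X_i\setminus U_i|=n_i-N$ for every $N$-choice set, so pulling them out of the sum is legitimate. The degenerate cases $n_i<N+1$, where the exponent could be negative, I would treat as formal identities, or exclude by assuming $n_i\ge N$. When some $n_i=N$, the term $e_{N+1}(X_i)$ vanishes and the formula is consistent.
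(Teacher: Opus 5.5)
Your proposal is correct and is essentially the paper's own proof: you substitute the forest weight of Lemma~\ref{Laplacian calculation} into \eqref{Sum over m-choice sets} and pull out the $U$-independent prefactor. You then factor each sum over $N$-choice sets componentwise by distributivity, and evaluate the second factor as $(N+1)e_{N+1}(X_j)$ by noting that each $(N+1)$-subset arises from exactly $N+1$ pairs $(S,v)$. Your remark on the degenerate cases $n_{i-1}\le N$, where the exponent becomes negative and the identity should be read formally, is a harmless extra check that the paper does not dwell on.
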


\begin{proof}
To simplify, let $A_N$ abbreviate the common factored weight
\[A_N :=\frac{(N!)^k}{N} \prod_{i=1}^k e_1(X_i)^{n_{i-1}-(N+1)}.\]

By Lemma \ref{Laplacian calculation}, the weight of the subforests rooted at each $N$-choice set $U$ is
\[
w(\mathcal F_U) 
= \prod_{i=1}^k e_1(X_i)^{n_{i-1}-(N+1)} 
\left(
\prod_{j=1}^k e_1(X_j) - \prod_{j=1}^k e_1(X_j\setminus U_j)
\right).
\]
We apply the forest weights to \eqref{Sum over m-choice sets} and distribute the terms $\prod_{u\in U_i} x_u$ to get the following sum over $N$-choice sets $U$:
\begin{equation}\label{Unfactored PN equation}
\begin{aligned}
    P_N(\mathbf{x}) 
    &= A_N\sum_U 
    \prod_{i=1}^k \prod_{u \in U_i} x_u 
    \left(
    \prod_{j=1}^k e_1(X_j) - \prod_{j=1}^k e_1(X_j\setminus U_j)
    \right)\\
    &= A_N \sum_U \left( \prod_{i=1}^k e_1(X_i) \prod_{u \in U_i} x_u - \prod_{i=1}^k e_1(X_i\setminus U_i) \prod_{u \in U_i} x_u \right).
\end{aligned}
\end{equation}
By distributivity, we can re-express the sums over $N$-choice sets as products over $i=1,\ldots,k$:
\begin{align*}\sum_U\prod_{i=1}^k e_1(X_i)\prod_{u \in U_i}x_u&=\prod_{i=1}^k\sum_{\substack{U_i\subseteq X_i\\|U_i|=N}} e_1(X_i)\prod_{u\in U_i}x_u,\\
\sum_U\prod_{i=1}^k e_1(X_i\setminus U_i) \prod_{u \in U_i} x_u &=\prod_{i=1}^k\sum_{\substack{U_i\subseteq X_i\\|U_i|=N}}e_1(X_i\setminus U_i)\prod_{u\in U_i}x_u.
\end{align*}
Next, we reduce the factors of the resulting products to elementary symmetric polynomials. The first factors simplify immediately:
\[\sum_{\substack{U_i\subseteq X_i\\|U_i|=N}} e_1(X_i) \prod_{u \in U_i} x_u = e_1(X_i)e_N(X_i).\]
For the next set of factors, note that there are $N+1$ ways to form each subset $U'_i\subseteq X_i$ of cardinality $N+1$ from a subset $U_i\subseteq X_i$ of cardinality $N$ and an element $v\in X_i\setminus U_i$. Thus, expanding $e_1(X_i\setminus U_i)$ yields
\[\sum_{\substack{U_i\subseteq X_i\\|U_i|=N}} e_1(X_i\setminus U_i) \prod_{u \in U_i} x_u = \sum_{\substack{U_i\subseteq X_i\\|U_i|=N}} \sum_{v \in X_i \setminus U_i} x_v \prod_{u \in U_i} x_u=(N+1) e_{N+1}(X_i).\]
Substituting back into \eqref{Unfactored PN equation} yields
\[P_N(\mathbf{x}) = A_N\left(\prod_{i=1}^k e_1(X_i)e_N(X_i)- (N+1)^k\prod_{i=1}^ke_{N+1}(X_i)\right).
\]
This completes the proof.
\end{proof}

\begin{remark}
If any $n_j <N+1$, then $\prod_{i=1}^k e_{n+1}(X_i) = 0$, and the weight becomes
\[P_N(\mathbf{x}) = (N!)^{k-1}(N-1)!\prod_{i=1}^k e_1(X_i)^{n_{i-1}-N}e_N(X_i).
\]
\end{remark}

\begin{corollary}
\label{N_cyclic_rep_form}
Let $X_1,\dots,X_k$ be finite sets with $|X_i|=n_i \ge 2$, and let  $V_i \cong \mathbb{C}^{n_i}$ be the fundamental representation of $\GL(n_i)$. Then
\begin{equation}
\label{eqn_funda_rep_k_sets}
\begin{split}
P_n(\mathbf{x})
&=
(n!)^{k-1}(n-1)!
\;
\chi\!\left(
\bigotimes_{i=1}^k
\left(
V_i^{\otimes (n_{i-1}-n)} \otimes \Lambda^n V_i
\right)  \right.\\
&\quad -
\left. (n+1)^k
\bigotimes_{i=1}^k
\left(
V_i^{\otimes (n_{i-1}-(n+1))} \otimes \Lambda^{n+1} V_i
\right)
\right),
\end{split}
\end{equation}
where indices are taken modulo $k$.
\end{corollary}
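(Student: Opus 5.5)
The plan is to read the corollary off Theorem~\ref{Weight of eventually cyclic functions} by translating each elementary symmetric factor into a character, as in the proof of Corollary~\ref{rep_theory_formulation}. Here the lowercase $n$ in the statement plays the role of the cycle length $N$.

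First, I rewrite the scalar prefactor. Since $N!=N\cdot (N-1)!$, we have
\[
\frac{(N!)^k}{N}=(N!)^{k-1}(N-1)!,
\]
which is the constant appearing in \eqref{eqn_funda_rep_k_sets}.

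Second, I distribute the common factor $\prod_{i=1}^k e_1(X_i)^{n_{i-1}-(N+1)}$ over the two terms in the parenthesis of Theorem~\ref{Weight of eventually cyclic functions}. The first term absorbs one extra factor $e_1(X_i)$ for each $i$, which gives
\[
\prod_{i=1}^k e_1(X_i)^{n_{i-1}-N}e_N(X_i).
\]
The second term is
\[
(N+1)^k\prod_{i=1}^k e_1(X_i)^{n_{i-1}-(N+1)}e_{N+1}(X_i).
\]
This regrouping is purely formal, because all the factors commute in the polynomial ring.

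Third, I apply the standard facts for the maximal torus of $\GL(n_i)$ with eigenvalues $(x_v)_{v\in X_i}$:
\[
\chi(V_i)=e_1(X_i),\qquad \chi(\Lambda^j V_i)=e_j(X_i).
\]
The character of $\prod_i\GL(n_i)$ is multiplicative on external tensor products, so each product over $i$ becomes the character of the corresponding $\bigotimes_{i=1}^k$. The character is also additive, and hence extends to the virtual (Grothendieck group) difference with integer coefficient $(N+1)^k$. Substituting these identifications gives \eqref{eqn_funda_rep_k_sets} term by term.

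The only real obstacle is making sense of the exponents. The tensor power $V_i^{\otimes(n_{i-1}-(N+1))}$ is a genuine representation only when $n_{i-1}\ge N+1$, and $V_i^{\otimes(n_{i-1}-N)}$ only when $n_{i-1}\ge N$.

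I would first dispose of the degenerate cases. If some $n_j<N$, there is no $N$-choice set, so $\EC(N)=\emptyset$. On the other side, $e_N(X_j)=0$ and $\Lambda^NV_j=\Lambda^{N+1}V_j=0$, so both sides vanish.

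If all $n_j\ge N$ but some $n_j=N$, then $\Lambda^{N+1}V_j=0$ and $e_{N+1}(X_j)=0$. Following the preceding remark, the second summand is then dropped and only the honest first term is needed.

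The remaining case is generic: all $n_j\ge N+1$. There every exponent is nonnegative, and the identification in the third step is literal. I would state this case explicitly and note that it is the situation in which the formula is meant.
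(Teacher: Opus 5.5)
Your proposal is correct and follows essentially the same route as the paper. Both rewrite $(N!)^k/N$ as $(N!)^{k-1}(N-1)!$, substitute $\chi(V_i)=e_1(X_i)$ and $\chi(\Lambda^jV_i)=e_j(X_i)$ into Theorem~\ref{Weight of eventually cyclic functions}, and absorb one factor of $V_i$ into $V_i^{\otimes(n_{i-1}-(N+1))}$. Your separate treatment of the cases $n_j\le N$, where some tensor exponents become negative, is a sensible addition that the paper's proof passes over without comment.
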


\begin{proof}
We begin from 
\[
P_n(\mathbf{x})
=
(n!)^{k-1}(n-1)!
\left(
\prod_{i=1}^k e_1(X_i)^{\,n_{i-1}-(n+1)}
\right)
\left(
\prod_{i=1}^k e_n(X_i)e_1(X_i)
-
(n+1)^k \prod_{i=1}^k e_{n+1}(X_i)
\right).
\]
Substituting standard character identities
\[
e_1(X_i) = \chi(V_i), \qquad
e_n(X_i) = \chi(\Lambda^n V_i), \qquad
e_{n+1}(X_i) = \chi(\Lambda^{n+1} V_i),
\]
into~\eqref{eqn_funda_rep_k_sets} gives
\[
P_n(\mathbf{x})
=
(n!)^{k-1}(n-1)!
\left(
\prod_{i=1}^k \chi(V_i)^{\,n_{i-1}-(n+1)}
\right)
\left(
\prod_{i=1}^k \chi(\Lambda^n V_i)\chi(V_i)
-
(n+1)^k \prod_{i=1}^k \chi(\Lambda^{n+1} V_i)
\right).
\]

We rewrite each product as a single character:
\begin{equation*}
\begin{split}
\prod_{i=1}^k \chi(V_i)^{\,n_{i-1}-(n+1)}
=
\chi\!\left(
\bigotimes_{i=1}^k V_i^{\otimes (n_{i-1}-(n+1))}
\right), \quad
&\prod_{i=1}^k \chi(\Lambda^n V_i)\chi(V_i) =\chi\!\left( \bigotimes_{i=1}^k (\Lambda^n V_i \otimes V_i)
\right), \\
\prod_{i=1}^k \chi(\Lambda^{n+1} V_i) &= \chi\!\left(
\bigotimes_{i=1}^k \Lambda^{n+1} V_i
\right).
\end{split}
\end{equation*}
Combining these, we obtain
\[
P_n(\mathbf{x})
=
(n!)^{k-1}(n-1)!
\;
\chi\!\left(
\bigotimes_{i=1}^k
V_i^{\otimes (n_{i-1}-(n+1))}
\otimes
\left(
\bigotimes_{i=1}^k (\Lambda^n V_i \otimes V_i)
-
(n+1)^k \bigotimes_{i=1}^k \Lambda^{n+1} V_i
\right)
\right).
\]

Finally, regrouping tensor factors inside each $i$ gives $V_i^{\otimes (n_{i-1}-(n+1))} \otimes V_i = V_i^{\otimes (n_{i-1}-n)}$, so that $V_i^{\otimes (n_{i-1}-(n+1))} \otimes (\Lambda^n V_i \otimes V_i)
=
V_i^{\otimes (n_{i-1}-n)} \otimes \Lambda^n V_i.$ Substituting this yields the claimed expression~\eqref{eqn_funda_rep_k_sets}.
\end{proof}

\subsection{Enumeration of eventually cyclic functions}
\label{Enumeration_eventually_cyclic}

\begin{proposition}\label{prop_card_EC(N)}
The cardinality of $\EC(N)$ is given by 
\begin{equation}\label{ECN geometric form}
    \#\EC(N) = 
    \prod_{i=1}^k \binom{n_i}{N} 
    \frac{(N!)^k}{N}  
     n_i^{n_{i-1}-(N+1)}
    \left(
    \prod_{j = 1}^k n_j - \prod_{j = 1}^k (n_j - N)
    \right),
\end{equation}
or, equivalenty, 
\begin{equation}\label{ECN algebraic form}
    \#\EC(N) = \frac{1}{N}
    \prod_{i=1}^k n_i^{n_{i-1}-N}
     \frac{(n_i-1)!}{(n_i-N)!}
    \left(
    \prod_{j=1}^k n_j - \prod_{j = 1}^k (n_j - N)
    \right).
\end{equation}
\end{proposition}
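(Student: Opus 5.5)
The plan is to specialize Theorem~\ref{Weight of eventually cyclic functions} by setting every variable $x_v=1$. Each monomial in $P_N(\mathbf x)$ corresponds to one eventually $N$-cyclic $k$-tuple with coefficient $1$, so this specialization gives $\#\EC(N)$. Under it we have $e_1(X_i)\mapsto n_i$, $e_N(X_i)\mapsto\binom{n_i}{N}$ and $e_{N+1}(X_i)\mapsto\binom{n_i}{N+1}$. This yields
\[
\#\EC(N)=\frac{(N!)^k}{N}\prod_{i=1}^k n_i^{n_{i-1}-(N+1)}\left(\prod_{j=1}^k n_j\binom{n_j}{N}-(N+1)^k\prod_{j=1}^k\binom{n_j}{N+1}\right).
\]

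Next we factor out $\prod_j\binom{n_j}{N}$ from the bracket. The identity $(N+1)\binom{n_j}{N+1}=(n_j-N)\binom{n_j}{N}$ turns the second product into $\prod_j\binom{n_j}{N}(n_j-N)$. The bracket therefore becomes $\prod_j\binom{n_j}{N}\bigl(\prod_j n_j-\prod_j(n_j-N)\bigr)$, and this is \eqref{ECN geometric form}. The constant $\frac{(N!)^k}{N}$ appears once, outside the product over $i$, as in the statement's formula read with the product absorbing the $i$-indexed factors.

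An alternative route avoids specializing polynomials. We sum $|\mathcal D_U^{(N)}|$ over the $\prod_i\binom{n_i}{N}$ choices of $N$-choice set $U$, using Lemma~\ref{Generalized cycle lemma} and Lemma~\ref{Cycle enumeration lemma}. For $w(\mathcal F_U)$ we take Lemma~\ref{Laplacian calculation} at $x=1$, where $e_1(X_j\setminus U_j)\mapsto n_j-N$. Every summand is equal, so both routes give the same expression, and this serves as a consistency check.

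For \eqref{ECN algebraic form}, we write $\binom{n_i}{N}N!=\frac{n_i!}{(n_i-N)!}=n_i\frac{(n_i-1)!}{(n_i-N)!}$. Absorbing the extra factor $n_i$ into $n_i^{n_{i-1}-(N+1)}$ raises the exponent to $n_{i-1}-N$, which gives the second form.

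The main obstacle is only the binomial bookkeeping, namely the identity $(N+1)\binom{n}{N+1}=(n-N)\binom{n}{N}$. We also have to handle the degenerate case $n_i<N$. Then both sides vanish, because $e_N(X_i)=0$ and $\frac{(n_i-1)!}{(n_i-N)!}$ is interpreted as $0$. Negative exponents of $n_i$ formally appear when $n_{i-1}\le N$, but they cancel against the bracket exactly as in the polynomial identity, so the formula is valid as a rational identity that evaluates to an integer.
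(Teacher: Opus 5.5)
Your proposal is correct and follows the paper's route: the paper also obtains the proposition by evaluating Theorem~\ref{Weight of eventually cyclic functions} at $x_v=1$. Your identities $(N+1)\binom{n}{N+1}=(n-N)\binom{n}{N}$ and $N!\binom{n_i}{N}=n_i\frac{(n_i-1)!}{(n_i-N)!}$ supply the bookkeeping the paper leaves implicit, and your reading of $\frac{(N!)^k}{N}$ as a single constant outside the product over $i$ is the intended one, consistent with the second form and with the formula stated in the introduction.
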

The proof of the above proposition is a straightforward computation using Theorem \ref{Weight of eventually cyclic functions}, evaluating each variable $x_v$ at 1.




In \eqref{ECN geometric form}, the factor $\prod_{i=1}^k \binom{n_i}{N}$ enumerates the $N$-choice sets; we choose $N$ elements from each component $X_i$. The coefficient $\frac{(N!)^k}{N}$ enumerates the cycle subgraphs on $U$ by \ref{Cycle enumeration lemma}. The third term enumerates the spanning in-forests of $\Gamma$ rooted at $U$, obtained by evaluating the forest weight polynomial $w(\mathcal{F}_U)$ at $x_u=1$.

Also, the cardinality of the subfamily $\mathcal{D}_U^{(N)}$ is identical for each $N$-choice set $U$, and we know there are $\prod_{i=1}^k \binom{n_i}{N}$ many $N$-choice sets. Therefore, the number of $k$-tuples that are eventually $N$-cyclic and stabilize to any single $N$-choice set is 
\[ 
\frac{(N!)^k}{N}  
\prod_{i=1}^k n_i^{n_{i-1}-(N+1)}
\left(\prod_{j=1}^k n_j - \prod_{j=1}^k (n_j-N)\right). 
\]

\begin{corollary}\label{Uniform ECN Cardinality}
If all components $X_i$ are of equal cardinality $|X_i| := n$, then the number of eventually $N$-cyclic $k$-tuples on $X$ becomes 
\[ 
\frac{1}{N} n^{k(n-N)} \left(\frac{(n-1)!}{(n-N)!}\right)^k \left(n^k - (n-N)^k\right). 
\]
\end{corollary}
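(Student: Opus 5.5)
The plan is to specialize the algebraic form \eqref{ECN algebraic form} of Proposition~\ref{prop_card_EC(N)} to $n_1=\cdots=n_k=n$, working factor by factor. The first step is to check that \eqref{ECN algebraic form} follows from \eqref{ECN geometric form}. The identity needed is
\[
\binom{n_i}{N}(N!)\,n_i^{-1}=\frac{n_i!}{(n_i-N)!\,n_i}=\frac{(n_i-1)!}{(n_i-N)!}.
\]
It absorbs one power of $n_i$, raising $n_i^{n_{i-1}-(N+1)}$ to $n_i^{n_{i-1}-N}$. The remaining factor $1/N$ is what is left of $\frac{(N!)^k}{N}$ once one $N!$ has been absorbed into each of the $k$ factors. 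Alternatively, I would take \eqref{ECN geometric form} as given and specialize it directly. It is itself obtained from Theorem~\ref{Weight of eventually cyclic functions} by setting $x_v=1$, using $e_1(X_i)\mapsto n_i$, $e_N(X_i)\mapsto\binom{n_i}{N}$ and $e_{N+1}(X_i)\mapsto\binom{n_i}{N+1}$, together with $(N+1)\binom{n_i}{N+1}=\binom{n_i}{N}(n_i-N)$.

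Next I substitute $n_i=n$ for every $i$, so that $n_{i-1}=n$ as well. The product $\prod_{i=1}^k n_i^{n_{i-1}-N}$ becomes $n^{k(n-N)}$. The product $\prod_{i=1}^k\frac{(n_i-1)!}{(n_i-N)!}$ becomes $\left(\frac{(n-1)!}{(n-N)!}\right)^k$. The bracket $\prod_j n_j-\prod_j(n_j-N)$ becomes $n^k-(n-N)^k$. Multiplying these by the prefactor $\frac1N$ gives exactly the stated formula.

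There is no real obstacle here. The only point needing care is the bookkeeping of the factor $\frac{(N!)^k}{N}$ when converting between the two forms, namely that exactly one $N!$ goes into each factor and a single $1/N$ remains overall. One should also keep in mind the implicit assumption $n\ge N$, since otherwise $\binom{n}{N}=0$ and the factorial expression is read as zero.
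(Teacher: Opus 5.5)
Your proposal is correct and matches the paper. The paper gives no separate argument for this corollary: it is just the specialization $n_1=\cdots=n_k=n$ of the algebraic form \eqref{ECN algebraic form} in Proposition~\ref{prop_card_EC(N)}, which is exactly what you carry out. Your handling of the prefactor $\frac{(N!)^k}{N}$ is also the right reading of \eqref{ECN geometric form}: one $N!$ goes into each factor via $\binom{n_i}{N}N!\,n_i^{-1}=\frac{(n_i-1)!}{(n_i-N)!}$, leaving a single $\frac{1}{N}$. In that display the constant is typeset inside the product, but it must be taken outside it.
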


\begin{corollary}\label{Uniform ECN asymptotic}
For each integer $n \ge N+1$, let $p_{k,N}(n)$ denote the probability that a $k$-tuple of functions between $k$ sets of cardinality $n$ is eventually $N$-cyclic. Then, 
\begin{equation}\label{Exact ECN probability}
    p_{k,N}(n) = \frac{1}{N} n^{-k(N-1)} \left(\frac{(n-1)!}{(n-N)!}\right)^k \left(1 - \left(1 - \frac{N}{n}\right)^k\right),
\end{equation}
and we derive the same limit behavior from Corollary \ref{Uniform EC asymptotic}:
\begin{equation}\label{limit ECN probability}
    p_{k,N}(n) = \frac{k}{n} + \mathcal O\left(\frac{1}{n^2}\right).
\end{equation}
\end{corollary}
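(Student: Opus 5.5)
The plan is to obtain \eqref{Exact ECN probability} by direct division, and then expand each factor asymptotically with $k$ and $N$ held fixed.

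\textbf{Exact formula.} There are $n^n$ choices for each map $f_i:X_i\to X_{i+1}$, so the total number of $k$-tuples is $n^{kn}$. I will divide the count in Corollary~\ref{Uniform ECN Cardinality} by this total:
\[
p_{k,N}(n)=\frac{1}{N}\, n^{k(n-N)-kn}\left(\frac{(n-1)!}{(n-N)!}\right)^k\left(n^k-(n-N)^k\right).
\]
Next I factor $n^k$ out of the last bracket, writing $n^k-(n-N)^k=n^k\bigl(1-(1-\tfrac{N}{n})^k\bigr)$. Combining it with $n^{-kN}$ gives the prefactor $n^{-k(N-1)}$, which is exactly \eqref{Exact ECN probability}. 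The hypothesis $n\ge N+1$ guarantees that all factorials and exponents appearing in Proposition~\ref{prop_card_EC(N)} are meaningful.

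\textbf{Asymptotics.} With $k$ and $N$ fixed, I will split \eqref{Exact ECN probability} into two factors and expand each.

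First, $\frac{(n-1)!}{(n-N)!}=\prod_{j=1}^{N-1}(n-j)=n^{N-1}\prod_{j=1}^{N-1}\bigl(1-\tfrac{j}{n}\bigr)=n^{N-1}\bigl(1+\mathcal O(\tfrac1n)\bigr)$. Raising to the $k$-th power and multiplying by $n^{-k(N-1)}$ gives
\[
n^{-k(N-1)}\left(\frac{(n-1)!}{(n-N)!}\right)^k=1+\mathcal O\!\left(\frac1n\right).
\]

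Second, the binomial theorem gives $1-(1-\tfrac{N}{n})^k=\tfrac{kN}{n}+\mathcal O(\tfrac1{n^2})$, exactly as in the proof of Corollary~\ref{Uniform EC asymptotic} with $\tfrac1n$ replaced by $\tfrac Nn$. In particular this factor is itself $\mathcal O(\tfrac1n)$.

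Multiplying the two factors, the $\mathcal O(\tfrac1n)$ error from the first factor times the leading $\tfrac{kN}{n}$ from the second contributes only $\mathcal O(\tfrac1{n^2})$. Hence
\[
p_{k,N}(n)=\frac1N\left(\frac{kN}{n}+\mathcal O\!\left(\frac1{n^2}\right)\right)=\frac kn+\mathcal O\!\left(\frac1{n^2}\right),
\]
which is \eqref{limit ECN probability}.

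\textbf{Main obstacle.} There is no real obstacle. The only care needed is to record that the implied constants depend on $k$ and $N$; this is why the expansion is uniform only for fixed $k$ and $N$. It is also why the scaling $k=cn$ treated later behaves differently.
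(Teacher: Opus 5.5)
Your proposal is correct and follows essentially the same route as the paper: divide the count in Corollary~\ref{Uniform ECN Cardinality} by $n^{kn}$, rewrite $n^{-(N-1)}\frac{(n-1)!}{(n-N)!}$ as $\prod_{j=1}^{N-1}\bigl(1-\tfrac{j}{n}\bigr)=1+\mathcal O(\tfrac1n)$, expand $1-(1-\tfrac Nn)^k=\tfrac{kN}{n}+\mathcal O(\tfrac1{n^2})$ binomially, and multiply. Your added remark that the implied constants depend on $k$ and $N$ is a useful clarification that the paper leaves implicit.
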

\begin{proof}
Fix any integer $n \ge N+1$. We divide the number of eventually $N$-cyclic $k$-tuples calculated in Corollary \ref{Uniform ECN Cardinality} by $n^{kn}$, the number of all $k$-tuples of functions, to get \eqref{Exact ECN probability}:
\begin{align*}
p_{k,N}(n) &= \frac{1}{N} \frac{n^{k(n-N)}}{n^{kn}} \left(\frac{(n-1)!}{(n-N)!}\right)^k \left(n^k - (n-N)^k\right) \\
&= \frac{1}{N} \left( n^{-(N-1)} \frac{(n-1)!}{(n-N)!} \right)^k \left(1 - \left(1 - \frac{N}{n}\right)^k\right).
\end{align*}
For the bracketed term, we distribute a factor $n$ of $n^{N-1}$ into the denominator of each of the $N-1$ factors of $\frac{(n-1)!}{(n-N)!}$ to get the $1+\mathcal O\left(\frac{1}{n}\right)$ term 
\[ 
n^{-(N-1)} \frac{(n-1)!}{(n-N)!}  = \left(\frac{n-1}{n}\right)\left(\frac{n-2}{n}\right)\cdots\left(\frac{n-N+1}{n}\right)= \prod_{j=1}^{N-1} \left(1 - \frac{j}{n}\right). 
\]

Next, expansion of the rightmost parenthesis using the binomial theorem gives 
\[ 
1 - \left(1 - \frac{N}{n}\right)^k = \frac{kN}{n} + \mathcal O\left(\frac{1}{n^2}\right). 
\]
Multiplying these asymptotic expansions together yields \eqref{limit ECN probability}:
\[ 
p_{k,N}(n) = \frac{1}{N} \left( 1 + \mathcal O\left(\frac{1}{n}\right)\right)^k \left(\frac{kN}{n} +\mathcal O\left(\frac{1}{n^2}\right)\right) = \frac{k}{n} + \mathcal O\left(\frac{1}{n^2}\right). 
\]
\end{proof}

\begin{corollary}\label{Proportional ECN asymptotic}
Fix any positive integer $c\in\mathbb N$. As $n \to \infty$, the probability $p_{cn,N}(n)$ that a $cn$-tuple is eventually $N$-cyclic equals
\[ 
\lim_{n \to \infty} p_{cn,N}(n) = \frac{1}{N} e^{-c \binom{N}{2}} \left( 1 - e^{-cN} \right). 
\]
\end{corollary}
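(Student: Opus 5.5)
We start from the exact formula \eqref{Exact ECN probability} of Corollary~\ref{Uniform ECN asymptotic}, substituting $k=cn$:
\[
p_{cn,N}(n) = \frac{1}{N}\left(\prod_{j=1}^{N-1}\left(1-\frac{j}{n}\right)\right)^{cn}\left(1-\left(1-\frac{N}{n}\right)^{cn}\right),
\]
where we have already rewritten $n^{-(N-1)}\frac{(n-1)!}{(n-N)!}$ as $\prod_{j=1}^{N-1}(1-j/n)$, exactly as in that proof. Here $N$ is fixed, so this holds for all $n\ge N+1$. We then take the limit of the two factors separately and multiply, since both limits are finite.

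The second factor is immediate: $(1-N/n)^{cn}=\big((1-N/n)^{n}\big)^{c}\to e^{-cN}$, so it tends to $1-e^{-cN}$. This mirrors the computation $\lim p_{cn}(n)=1-e^{-c}$ already carried out after Corollary~\ref{Uniform EC asymptotic}.

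For the first factor, we split the product over $j$: since there are finitely many factors (namely $N-1$, independent of $n$),
\[
\left(\prod_{j=1}^{N-1}\left(1-\frac{j}{n}\right)\right)^{cn}=\prod_{j=1}^{N-1}\left(\left(1-\frac{j}{n}\right)^{n}\right)^{c}\longrightarrow \prod_{j=1}^{N-1}e^{-cj}=e^{-c\sum_{j=1}^{N-1}j}=e^{-c\binom{N}{2}}.
\]
The only point requiring care, and the main (mild) obstacle, is justifying exchanging the limit with the product. This is legitimate because $N$ does not grow with $n$, and each factor converges by the standard limit $(1-a/n)^n\to e^{-a}$. One could alternatively take logarithms and use $\log(1-j/n)=-j/n+\mathcal O(1/n^2)$, so that $cn\sum_j\log(1-j/n)=-c\binom N2+\mathcal O(1/n)$. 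The case $N=1$ is consistent: the empty product gives $1$, and we recover $1-e^{-c}$. Multiplying the two limits by the constant $\frac1N$ yields the claimed value.
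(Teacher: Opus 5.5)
Your proof is correct and follows essentially the same route as the paper. Like the paper, you substitute $k=cn$ into the exact formula for $p_{k,N}(n)$ and rewrite $n^{-(N-1)}\frac{(n-1)!}{(n-N)!}$ as $\prod_{j=1}^{N-1}(1-j/n)$; then, since this is a product of a fixed number of factors, you pass the limit through it using $(1-a/n)^n\to e^{-a}$, which gives the factors $e^{-c\binom{N}{2}}$ and $1-e^{-cN}$.
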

\begin{proof}
We substitute $k = cn$ into the exact probability formula \eqref{Exact ECN probability} and analyze the limit of the two factors. First, using the standard limit $\lim_{n\to\infty} \left(1 - \frac{x}{n}\right)^n = e^{-x}$, the rightmost term approaches
\[ 
\lim_{n \to\infty} 
 1 - \left(1 - \frac{N}{n}\right)^{cn} 
= 1 - (e^{-N})^c = 1 - e^{-cN}. 
\]
Secondly, we apply the same limit to the factors of the bracketed term. Since the cycle size $N$ is fixed, we can pass the limit inside the finite product
\[ \lim_{n\to\infty}\left( \prod_{j=1}^{N-1} \left(1 -\frac{j}{n}\right) \right)^{cn} = \prod_{j=1}^{N-1} \left( \lim_{n\to\infty}\left(1 -\frac{j}{n}\right)^n \right)^c = \prod_{j=1}^{N-1} e^{-jc}=e^{-c \sum_{j=1}^{N-1} j}=e^{-c\binom{N}{2}}. \]
Multiplying these limits together with the coefficient $\frac{1}{N}$ yields the result.
\end{proof}

\subsection{Eventually cyclic pairs and triples}
\label{Eventually_cyclic_pairs_and_triples}

In this section, we discuss special cases when $k=2,3$.

\begin{figure}
    \centering
\begin{tikzpicture}[scale=0.6,decoration={
    markings,
    mark=at position 0.35 with {\arrow{>}}}]

\begin{scope}[shift={(0,0)}]


    \draw[thick, postaction = {decorate}] (0,2) .. controls (0.5,1.25) and (3.5,2.25) .. (4,3);

    \node at (1.5,1.35) {$f$};

    \draw[thick, postaction = {decorate}] (4,3) .. controls (3.5,4.35) and (0.5,4.10) .. (0,4);

    \node at (1.5,4.45) {$g$};

    \draw[thick, postaction = {decorate}] (0,4) .. controls (-2,3.75) and (-3,-1) .. (0,-2);

    \draw[thick, fill] (0.25,2) arc (0: 360: 1.75 mm);

    \node at (-0.65,2) {$x_1$};

    \draw[thick, fill] (4.25,3) arc (0: 360: 1.75 mm);

    \node at (4.75,3) {$y_1$};

    \draw[thick] (-2.5,2.5) -- (-2.75,2.5) -- (-2.75,-2.5) -- (-2.5,-2.5);

    \node at (-3.5,0) {$N$};

\end{scope}

\begin{scope}[shift={(0,-2)}]


    \draw[thick, postaction = {decorate}] (0,2) .. controls (0.5,1.25) and (3.5,2.25) .. (4,3);

    \node at (1.5,1.35) {$f$};

    \draw[thick, fill] (0.25,2) arc (0: 360: 1.75 mm);

    \node at (-0.65,2) {$x_2$};

    \draw[thick, fill] (4.25,3) arc (0: 360: 1.75 mm);

    \node at (4.75,3) {$y_2$};

    \draw[thick, postaction = {decorate}] (4,3) .. controls (3.5,4.1) and (0.5,5) .. (0,4);

    \node at (1.5,4.75) {$g$};

\end{scope}

\begin{scope}[shift={(0,-4)}]

    \draw[thick, postaction = {decorate}] (0,2) .. controls (0.5,1.55) and (3.5,2.25) .. (4,3);

    \node at (1.5,1.47) {$f$};

    \draw[thick, postaction = {decorate}] (4,3) .. controls (3.5,4.1) and (0.5,5) .. (0,4);

    \node at (2.00,4.65) {$g$};

    \draw[thick, fill] (0.25,2) arc (0: 360: 1.75 mm);

    \node at (-0.65,1.75) {$x_3$};

    \draw[thick, fill] (4.25,3) arc (0: 360: 1.75 mm);

    \node at (4.75,3) {$y_3$};

\end{scope}


\begin{scope}[shift={(12,0)}]

\begin{scope}[shift={(0,2)}]

    \node at (3.00,3.45) {$g$};

    \draw[thick, postaction = {decorate}] (4,3) .. controls (3.5,3.35) and (0.5,3.10) .. (0,0);

    \draw[thick, fill] (4.25,3) arc (0: 360: 1.75 mm);

    \node at (4.75,3) {$y_1$};

\end{scope}

\begin{scope}[shift={(0,0)}]


    \draw[thick, postaction = {decorate}] (0,2) .. controls (0.5,1.25) and (3.5,2.25) .. (4,3);

    \node at (1.5,1.35) {$f$};

    \draw[thick, postaction = {decorate}] (4,3) .. controls (3.5,4.35) and (0.5,4.10) .. (0,4);

    \node at (-0.25,4.35) {$g$};

    \draw[thick, postaction = {decorate}] (0,4) .. controls (-2,3.75) and (-3,-1) .. (0,-2);

    \draw[thick, fill] (0.25,2) arc (0: 360: 1.75 mm);

    \node at (-0.65,2) {$x_1$};

    \draw[thick, fill] (4.25,3) arc (0: 360: 1.75 mm);

    \node at (4.75,3) {$y_2$};
        
\end{scope}

\begin{scope}[shift={(0,-2)}]


    \draw[thick, postaction = {decorate}] (0,2) .. controls (0.5,1.25) and (3.5,2.25) .. (4,3);

    \node at (1.5,1.35) {$f$};

    \draw[thick, fill] (0.25,2) arc (0: 360: 1.75 mm);

    \node at (-0.65,2) {$x_2$};

    \draw[thick, fill] (4.25,3) arc (0: 360: 1.75 mm);

    \node at (4.75,3) {$y_3$};

    \draw[thick, postaction = {decorate}] (4,3) .. controls (3.5,4.1) and (0.5,5) .. (0,4);

    \node at (1.5,4.75) {$g$};

\end{scope}

\begin{scope}[shift={(0,-4)}]


    \draw[thick, postaction = {decorate}] (0,2) .. controls (0.5,1.55) and (3.5,2.25) .. (4,3);

    \node at (1.5,1.50) {$f$};

    \draw[thick, postaction = {decorate}] (4,3) .. controls (3.5,4.1) and (0.5,5) .. (0,4);

    \node at (2.05,4.65) {$g$};

    \draw[thick, fill] (0.25,2) arc (0: 360: 1.75 mm);

    \node at (-0.65,1.75) {$x_3$};

    \draw[thick, fill] (4.25,3) arc (0: 360: 1.75 mm);

    \node at (4.75,3) {$y_4$};

\end{scope}

\begin{scope}[shift={(0,-6)}]


    \node at (1.5,1.60) {$f$};

    \draw[thick,postaction={decorate}] (0,2) .. controls (0.5,2) and (3,2) .. (4,3);

    \draw[thick, postaction = {decorate}] (4,3) .. controls (3.5,3.25) and (0.5,5) .. (0,6);

    \node at (3.5,3.70) {$g$};

    \draw[thick, fill] (0.25,2) arc (0: 360: 1.75 mm);

    \node at (-0.65,1.75) {$x_4$};

    \draw[thick, fill] (4.25,3) arc (0: 360: 1.75 mm);

    \node at (4.75,3) {$y_5$};

\end{scope}

\end{scope}

\end{tikzpicture}

    \caption{Left: example of a $3$-cyclic $2$-tuple.
    Right: example of one eventually $3$-cyclic $2$-tuple for $|X_1|=4$, $|X_2|=5$.}
    \label{fig_00015}
\end{figure}

\begin{example}
Suppose $k=2$ and that $X_1, X_2$ are disjoint sets of cardinality $n_1$ and $n_2$. An eventually $N$-cyclic pair $(f,g)$ consists of maps $f: X_1 \to X_2, \ g: X_2 \to X_1$, such that for some $m \geq 1$, the iterate $(gf)^m : X_1 \to X_1$ stabilizes to a subset $U_1 \subseteq X_1$ of size $N$, and $(gf)^m$ induces an $N$-cycle on $U_1$.

In this case, the digraph $\Gamma$ reduces to a directed bipartite graph between $X_1$ and $X_2$ containing a unique directed cycle of length $2N$. Removing the edges of this cycle yields a spanning in-forest of $\Gamma$ rooted along the cycle.

By Theorem~\ref{Weight of eventually cyclic functions} and Corollary~\ref{N_cyclic_rep_form}, the generating polynomial and the virtual character for eventually $N$-cyclic pairs, respectively, are
\begin{equation*}
\begin{split}
P_N(\mathbf{x}) 
&= \frac{(N!)^2}{N} \left(e_1(X_1)^{n_1-(N+1)} e_1(X_2)^{n_2-(N+1)}\right) \cdot  \\ 
&\hspace{4mm}\cdot \left(e_1(X_1)e_N(X_1)\, e_1(X_2)e_N(X_2) - (N+1)^2 e_{N+1}(X_1)e_{N+1}(X_2)\right) \\
&= (N!) (N-1)! \,
\chi\! \ \Big(
\left(V_1^{\otimes(n_2-(N+1))} \otimes V_2^{\otimes(n_1-(N+1))}\right)  \\ 
&\hspace{4mm}\otimes \left(
V_1^{\otimes N} \otimes V_2^{\otimes N}
- (N+1)^2 \Lambda^{N+1}V_1 \otimes \Lambda^{N+1}V_2
\right)\Big).
\end{split}
\end{equation*}

Applying Proposition \ref{prop_card_EC(N)}, the cardinality of eventually $N$-cyclic pairs is
\[
\#\EC(N) = \binom{n_1}{N}\binom{n_2}{N}\frac{(N!)^2}{N} \,
n_1^{\,n_1-(N+1)} n_2^{\,n_2-(N+1)} \left(n_1 n_2 - (n_1 - N)(n_2 - N)\right).
\]

See Figure~\ref{fig_00015} for an example where $k=2$ and $N=3$.
\end{example}

\begin{example}
Let $k=3$, with $X_1, X_2, X_3$ disjoint sets of cardinalities $n_1, n_2, n_3$. An \emph{eventually $N$-cyclic triple} $(f,g,h)$ consists of maps $f: X_1 \to X_2, \ g: X_2 \to X_3, \ h: X_3 \to X_1$, such that for some $m \geq 1$, the composition $(hgf)^m : X_1 \to X_1$ stabilizes to a subset $U_1 \subseteq X_1$ of size $N$, and induces an $N$-cycle on $U_1$.

In this case, the digraph $\Gamma$ has directed edges from $X_1$ to $X_2$, $X_2$ to $X_3$, and $X_3$ to $X_1$, and contains a unique directed cycle of length $3N$. Removing the edges of this cycle yields a spanning in-forest of $\Gamma$ rooted along the cycle.

By Theorem~\ref{Weight of eventually cyclic functions} and Corollary~\ref{N_cyclic_rep_form}, the generating polynomial and the virtual character
for eventually $N$-cyclic triples, respectively, are
\begin{equation*}
\begin{split}
P_N(\mathbf{x}) 
&= \frac{(N!)^3}{N} \left(\prod_{i=1}^3 e_1(X_i)^{n_i-(N+1)}\right)
\left(\prod_{j=1}^3 e_1(X_j)e_N(X_j) 
- (N+1)^3 \prod_{j = 1}^3 e_{N+1}(X_j)\right)  \\ 
&= (N!)^{2}(N - 1)! \,
\chi\! \ \Big(
\left(
V_1^{\otimes(n_3-(N+1))}
\otimes
V_2^{\otimes(n_1-(N+1))}
\otimes
V_3^{\otimes(n_2-(N+1))}
\right)   \\ 
&\hspace{4mm} \otimes\left(
V_1^{\otimes N} \otimes V_2^{\otimes N} \otimes V_3^{\otimes N}
- (N+1)^3 \Lambda^{N+1}V_1 \otimes \Lambda^{N+1}V_2 \otimes \Lambda^{N+1}V_3
\right)
\Big).
\end{split}
\end{equation*}

Applying Proposition \ref{prop_card_EC(N)}, the cardinality of eventually $N$-cyclic triples is
\[
\#\EC(N) 
= \prod_{i=1}^3 \binom{n_i}{N} \frac{(N!)^3}{N}
\ n_i^{\,n_i-(N+1)}
\left(
\prod_{j=1}^3 n_j - \prod_{j=1}^3 (n_j - N)
\right).
\]
\end{example}

\medskip

These formulas recover the eventually constant case when $N=1$.

\section{Multisymmetric polynomials associated to eventually \texorpdfstring{$\lambda$}{lambda}-cyclic functions}
\label{section_lambda_cyclic_functions}

Fix a positive integer $N\in\mathbb N$ and an integer partition $\lambda\vdash N$. For each number $l\leq N$, let $m_l$ denote the multiplicity of $l$ in $\lambda$.

A $k$-tuple $(f_1,\ldots,f_k)$ is \defn{eventually $\lambda$-cyclic} if iterated images of the composition $f_k\cdots f_1:X_1\to X_1$ eventually stabilize to a subset $(f_k\cdots f_1)^m(X_1)\subseteq X_1$ of cardinality $N$, on which $f_k\cdots f_1$ acts as a permutation of \defn{cycle type $\lambda$}. That is, for each $l\leq N$, $f_k\cdots f_1$ has $m_l$ disjoint cycles of length $l$. Let $\EC(\lambda)$ be the set of eventually $\lambda$-cyclic $k$-tuples, and define the generating function
\[
P_\lambda(\mathbf x):=\sum_{(f_1,\ldots,f_k)\in\EC(\lambda)}w(f_1,\ldots,f_k)=w(\EC(\lambda)).
\]

\subsection{Multisymmetric polynomials of eventually \texorpdfstring{$\lambda$}{lambda}-cyclic functions}
\label{lambda_cyclic_functions}

We will be brief since calculating $P_\lambda(\mathbf x)$ involves similar methods used in Sections~\ref{N-choice set decomposition} and \ref{subsection_wt_eventually_cyclic_fns}. We are still analyzing functions whose composition eventually stabilizes on sets of cardinality $N$, so we again decompose by $N$-choice sets $U$ and consider subsets of $\mathcal C_U$ and $\mathcal D_U$. Now, we simply take more general subsets of $\mathcal C_U$ and $\mathcal D_U$.

For each $N$-choice set $U$, let $\mathcal C_U^{(\lambda)}\subseteq \mathcal C_U$ be the subset of cyclic subgraphs $(f_1,\ldots,f_k)$ of $\Gamma$ on $U$ whose composition $f_k\cdots f_1:U_1\to U_1$ has cycle type $\lambda$. Also, define the corresponding subset $\mathcal D_U^{(\lambda)}\subseteq \mathcal D_U$ of spanning functional digraphs of $\Gamma$ whose cyclic part is in $\mathcal C_U^{(\lambda)}$. 

\begin{lemma}
    For each $N$-choice set $U$, the cardinality of $\mathcal C_U^{(\lambda)}$ is
    \[|\mathcal C_U^{(\lambda)}|=\frac{(N!)^k}{\prod_{l=1}^N l^{m_l}m_l!}.\]
\end{lemma}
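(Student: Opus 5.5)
The plan mirrors the proof of Lemma~\ref{Cycle enumeration lemma}, replacing $N$-cycles by permutations of cycle type $\lambda$. First, I will identify $\mathcal C_U^{(\lambda)}$ with the set of $k$-tuples $(f_1,\ldots,f_k)$ in which each $f_i:U_i\to U_{i+1}$ is a bijection and $f_k\cdots f_1:U_1\to U_1$ has cycle type $\lambda$.

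For this identification I will argue as in Lemma~\ref{Cycle bijection lemma}. A subgraph $C\in\mathcal C_U$ is a disjoint union of cycles covering $U$, so every vertex of $U$ has in-degree and out-degree $1$. Hence $f_C$ is a permutation of $U$ mapping $U_i$ into $U_{i+1}$. Since the $U_i$ are finite of equal size $N$ and $f_C$ is injective, each $f_i$ is a bijection. Conversely, if each $f_i$ is a bijection, then $f_C$ is a permutation of $U$. Its functional graph is then a disjoint union of cycles, so it lies in $\mathcal C_U$. The condition defining $\mathcal C_U^{(\lambda)}$ is exactly the cycle-type condition on the composition, so this gives the claimed description.

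Second, I will apply the same reparametrization used in Lemma~\ref{Cycle enumeration lemma}:
\[
(f_1,\ldots,f_k)\mapsto(f_1,\ldots,f_{k-1},f_k\cdots f_1).
\]
This is a bijection onto tuples $(f_1,\ldots,f_{k-1},g)$ in which the $f_i$ are bijections and $g:U_1\to U_1$ is a permutation of cycle type $\lambda$. The inverse recovers $f_k$ as $g\,(f_{k-1}\cdots f_1)^{-1}$, which is a well-defined bijection $U_k\to U_1$.

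Third, I will count. There are $(N!)^{k-1}$ choices for $f_1,\ldots,f_{k-1}$. The number of permutations of cycle type $\lambda$ in $S_N$ is the standard class size $N!/\prod_l l^{m_l}m_l!$ from \cite{Sta11}. Multiplying gives $(N!)^k/\prod_{l} l^{m_l}m_l!$.

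The only point needing any care is the first step, namely that cyclic subgraphs on an $N$-choice set correspond exactly to tuples of bijections, now with several cycles allowed. The injectivity and equal-cardinality argument above settles it. As a sanity check, taking $\lambda=(N)$ recovers $(N!)^k/N$, in agreement with Lemma~\ref{Cycle enumeration lemma}.
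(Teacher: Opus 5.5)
Your proof is correct and is the paper's argument: you identify $\mathcal C_U^{(\lambda)}$ with $k$-tuples of bijections $f_i:U_i\to U_{i+1}$ whose composite has cycle type $\lambda$, reparametrize by $(f_1,\ldots,f_{k-1},f_k\cdots f_1)$, and multiply $(N!)^{k-1}$ by the class size $N!/\prod_l l^{m_l}m_l!$. Your injectivity-and-equal-cardinality justification of the first identification, now with several cycles allowed, fills in a step the paper only asserts. One small correction: the class-size formula should be credited to Stanley's \emph{Enumerative Combinatorics} \cite{Sta12}, which is what the paper cites in its proof, since the key you used, which you took from the introduction, points to an unrelated paper.
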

\begin{proof}
    The subfamily $\mathcal C_U^{(\lambda)}$ corresponds to the $k$-tuples $(f_1,\ldots,f_k)$ of bijections $f_i:U_i\to U_{i+1}$ for which the composition $f_k\cdots f_1:U_1\to U_1$ has cycle type $\lambda$. It is known (see \cite{Sta12}) that the number of permutations of type $\lambda$ is
    \begin{equation}\label{cardinality_cycle_type_lambda}
        \frac{N!}{\prod_{l=1}^N l^{m_l}m_l!}.
    \end{equation}

    The remainder of the proof follows by the right-composition
    \[(f_1,\ldots,f_k)\mapsto (f_1,\ldots,f_{k-1},f_k\circ (f_{k-1}\cdots f_1)),\]
    as in Lemma \ref{Cycle enumeration lemma}. We simply replace the set of $N$-cycles on $g:U_1\to U_1$, of cardinality $(N-1)!$, with the set of permutations $g:U_1\to U_1$ of cycle type $\lambda$, of cardinality \eqref{cardinality_cycle_type_lambda}.
\end{proof}

Consider any spanning functional subgraph $D$ of $\Gamma$ with $k$-tuple $(f_1,\ldots,f_k)$ whose composition $f_k\cdots f_1:X_1\to X_1$ stabilizes at a subset $U_1\subseteq X_1$ of size $N:=|U_1|$. By the correspondence following Lemma \ref{Cycle bijection lemma}, any vertex $u\in X_1$ belongs to $V(C(D))$ if and only if $u$ belongs to the orbit of a cycle of $f_k\cdots f_1$, if and only if $u\in U_1$. Therefore, $V(C(D))\cap X_1=U_1$, and $U:=V(C(D))$ is the union of the iterated images:
\begin{equation}\label{N-choice set structure}
U_1,\,U_2:=f_1(U_1),\,U_3:=f_2(U_2),\,\ldots,\,U_k:=f_{k-1}(U_{k-1}).
\end{equation}

Note that $C(D)$ corresponds to the $k$-tuple of restrictions:
\[(f_1|_{U_1},f_2|_{U_2},\ldots,f_k|_{U_k}),\]
and each map $f_i:U_i\to U_{i+1}$ is a bijection, so $U$ is an $N$-choice set. Also, the composition of the restrictions $f_i|_{U_i}$ is equal to the composition of the full maps $f_i$ restricted to $U_1$:
\[f_k|_{U_k}\cdots f_1|_{U_1}=(f_k\cdots f_1)|_{U_1}.\]
Therefore, $D\in\mathcal D_U^{(\lambda)}$ if and only if the cyclic part of the composition $f_k\cdots f_1:X_1\to X_1$ has cycle type $\lambda$.

Thus, $\EC(\lambda)$ contains each subfamily $\mathcal D_U^{(\lambda)}$. Conversely, the composition $f_k\cdots f_1$ of each digraph $(f_1,\ldots,f_k)\in\EC(\lambda)$ stabilizes at some set $U_1\subseteq X_1$ of cardinality $N$. So, $(f_1,\ldots,f_k)\in\mathcal D_U^{(\lambda)}$, for $U$ the $N$-choice set given by \eqref{N-choice set structure}.

Since the classes $\mathcal D_U^{(\lambda)}$ partition $\EC(\lambda)$ as $U$ ranges over all $N$-choice sets, the generating function decomposes as 
\begin{equation}
\label{Plambda sum}
P_\lambda(\mathbf x)=\sum_U w(\mathcal D_U^{(\lambda)})=\frac{(N!)^k}{\prod_{l=1}^N l^{m_l}m_l!}\sum_U \left(\prod_{u\in U}x_u\right)w(\mathcal F_U).
\end{equation}

Observe that \eqref{Plambda sum} differs from \eqref{Sum over m-choice sets} by a constant factor, so we have the ratios
\begin{equation}
\label{Plambda pn ratio}
    P_\lambda(\mathbf x)=\frac{N}{\prod_{l=1}^N l^{m_l}m_l!} P_N(\mathbf x),\quad \#\EC(\lambda)=\frac{N}{\prod_{l=1}^N l^{m_l}m_l!} \#\EC(N).
\end{equation}

\begin{proposition}
\label{lambda_cyclic_generating_function}
The weighted enumeration of all eventually $\lambda$-cyclic $k$-tuples is given by 
\[
P_\lambda(\mathbf x) 
= \frac{(N!)^k}{\prod_{l=1}^N l^{m_l}m_l!} 
\prod_{i=1}^k e_1(X_i)^{n_{i-1}-(N+1)}
\left(
\prod_{j=1}^k e_1(X_j)e_N(X_j) - (N+1)^k\prod_{j=1}^k e_{N+1}(X_j)
\right),
\]
and the cardinality of the eventually $\lambda$-cyclic $k$-tuples is
    \[
    \#\EC(\lambda)=\frac{1}{\prod_{l=1}^N l^{m_l}m_l!}
\prod_{i=1}^k n_i^{\,n_{i-1}-(N+1)}\frac{n_i!}{(n_i-N)!}
\left(
\prod_{j=1}^k n_j - \prod_{j=1}^k (n_j - N)
\right).
\]
\end{proposition}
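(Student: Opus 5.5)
The proof reduces to results already in hand: I will use the ratio \eqref{Plambda pn ratio}, combined with Theorem~\ref{Weight of eventually cyclic functions} for the generating function and Proposition~\ref{prop_card_EC(N)} for the cardinality. Set $z_\lambda:=\prod_{l=1}^N l^{m_l}m_l!$.

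For the weighted enumeration, I first justify \eqref{Plambda sum}. By the preceding discussion, the classes $\mathcal D_U^{(\lambda)}$ partition $\EC(\lambda)$ as $U$ ranges over $N$-choice sets. Applying Lemma~\ref{Cyclic forest weight decomposition} with $\mathcal C'_U=\mathcal C_U^{(\lambda)}$ gives $w(\mathcal D_U^{(\lambda)})=|\mathcal C_U^{(\lambda)}|\prod_{u\in U}x_u\, w(\mathcal F_U)$, and the preceding lemma evaluates $|\mathcal C_U^{(\lambda)}|=(N!)^k/z_\lambda$. This constant is independent of $U$, so comparing with \eqref{Sum over m-choice sets} (whose constant is $(N!)^k/N$) gives $P_\lambda=\frac{N}{z_\lambda}P_N$. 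Substituting Theorem~\ref{Weight of eventually cyclic functions}, the factor $\frac{N}{z_\lambda}\cdot\frac{(N!)^k}{N}$ collapses to $\frac{(N!)^k}{z_\lambda}$, which is exactly the stated formula.

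For the cardinality, I specialize every $x_v=1$, so that $e_1(X_i)\mapsto n_i$ and $e_j(X_i)\mapsto\binom{n_i}{j}$. Rather than expanding this directly, it is cleaner to use the ratio $\#\EC(\lambda)=\frac{N}{z_\lambda}\#\EC(N)$ together with \eqref{ECN geometric form}. The factors $\binom{n_i}{N}N!=\frac{n_i!}{(n_i-N)!}$ absorb the $(N!)^k$, and the $\frac1N$ cancels against the $N$ in the ratio. This leaves
\[
\frac{1}{z_\lambda}\prod_{i=1}^k n_i^{n_{i-1}-(N+1)}\frac{n_i!}{(n_i-N)!}\Bigl(\prod_j n_j-\prod_j(n_j-N)\Bigr).
\]

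As a cross-check via the direct route, evaluating the bracket of the generating function gives
\[
\prod_j n_j\binom{n_j}{N}-(N+1)^k\prod_j\binom{n_j}{N+1}.
\]
Using $(N+1)\binom{n}{N+1}=(n-N)\binom{n}{N}$, this equals $\prod_j\binom{n_j}{N}\bigl(\prod n_j-\prod(n_j-N)\bigr)$, which agrees with the formula above.

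The only genuine subtlety is the partition step, namely that membership in $\mathcal D_U^{(\lambda)}$ is equivalent to the cyclic part of $f_k\cdots f_1$ having type $\lambda$ on the stable set. That argument is already given before the statement, so what remains is the routine bookkeeping of constants, for which the identity $\binom{n}{N}N!=n!/(n-N)!$ suffices.
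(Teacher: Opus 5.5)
Your proposal is correct and is essentially the paper's proof. The paper likewise just substitutes Theorem~\ref{Weight of eventually cyclic functions} and Proposition~\ref{prop_card_EC(N)} into the ratio \eqref{Plambda pn ratio}, which it derives beforehand (from the partition by $\mathcal D_U^{(\lambda)}$ and the count $|\mathcal C_U^{(\lambda)}|=(N!)^k/\prod_{l} l^{m_l}m_l!$) exactly as you recap; your direct evaluation via $(N+1)\binom{n}{N+1}=(n-N)\binom{n}{N}$ is a valid but optional consistency check.
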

\begin{proof}
    Substitute the explicit formulas for $P_N(\mathbf x)$ and $\#\EC(N)$ from Theorem \ref{Weight of eventually cyclic functions} and Proposition \ref{prop_card_EC(N)} into \eqref{Plambda pn ratio}.
\end{proof}

\begin{remark}
    This formula can again be expressed as a virtual character of $\prod_{i=1}^k \GL(n_i)$, only differing from Corollary~\ref{N_cyclic_rep_form} by a constant as given in \eqref{Plambda pn ratio}.
\end{remark}

\subsection{Eventually \texorpdfstring{$\lambda$}{lambda}-cyclic triples}
\label{subsection_eventually_lambda_cyclic_triple}

\begin{example}
\label{prop_m_n_2}
Let $X_1,X_2, X_3$ have cardinality $m, n,2$, respectively. 
Let $f:X_1\rightarrow X_2$, $g: X_2\rightarrow X_3$, $h:X_3\rightarrow X_1$.
The number of eventually constant maps is
\begin{equation}
\label{eqn_event_constant_m1_m2_2}
\begin{split}
|(m, n, 2)| &= 2^n n^m m^2 - 
\sum_{k=1}^{m-1} \sum_{\ell=1}^{n-1} \binom{m}{k} \binom{n}{\ell} 
|(k,\ell,1)|  |(m-k,n-\ell,1)|  \\ 
&\qquad 
 - \sum_{k=1}^{m-1} \sum_{\ell=1}^{n-1} 
\binom{m}{k} \binom{n}{\ell} k \ell^k \cdot 1 (m-k)(n-\ell)^{m-k} \cdot 1.
\end{split}
\end{equation}

The expression $2^n n^m m^2$ provides the total number of admissible maps. For the second summation, we are counting the number of maps which contain two $3$-cycles. See Figure~\ref{fig_00008}, left. So fix $f,g,h$. We partition $X_1 \cup X_2 \cup X_3$ into two disjoint sets: all the elements and maps that map into one $3$-cycle (say these elements are boxed) and the rest that map into the other $3$-cycle (these elements are circled). These two sets are disjoint since each proper subset contains a unique fixed point. Let $1\leq k < m$ and $1\leq \ell < n$. There are $k$ elements of $X_1$ to choose from (to be a red solid box), $\ell$ elements of $X_2$ to choose from (to be a blue dashed box), and these $\ell$ elements all map to the boxed element in $X_3$ to obtain $|(k,\ell,1)|$. Similarly, we have  $m-k$ elements to choose from $X_1$ to be circled and $n-\ell$ elements to choose from $X_2$ to be circled, and these $n-\ell$ elements map to the other (circled) element in $X_3$ to obtain $|(m-k,n-\ell,1)|$. Since there are $\binom{m}{k}$ choices for the $k$ element subset of boxes in $X_1$ and $\binom{n}{\ell}$ choices for the $\ell$ element subset of boxes in $X_2$, we now sum over all $k$ and $\ell$ to obtain the second summation. 

For the last summation, we are counting all maps $(f,g,h)$ such that it contains a $6$-cycle. 
See Figure~\ref{fig_00008}, right.
Let $1\leq k< m$ and $1\leq \ell <n$.
There are $k$ choices to send $x_{31}$ to a red solid box. Then there are $\ell^k$ choices to send a red solid box to a blue dashed box, and $1$ choice to send the blue dashed boxes to the orange dotted box. Then there are $m-k$ choices to send $x_{32}$ to a red solid circle, $(n-\ell)^{m-k}$ choices to send a red solid circle to a blue dashed circle, and $1$ choice to send the blue dashed circles to the orange dotted circle. Finally, there are $\binom{m}{k}$ ways to choose $k$ subsets of the $m$ set to be a red solid box and $\binom{n}{\ell}$ ways to choose $\ell$ subset of the $n$ set to be blue dashed boxes. We sum over all $k$ and $\ell$ to obtain the last term.
\end{example}

\begin{figure}
    \centering
\begin{tikzpicture}[scale=0.5,decoration={
    markings,
    mark=at position 0.5 with {\arrow{>}}}]


\begin{scope}[shift={(0,0)}]

\node at (0.0,7.10) {$X_1$};

\node at (4.0,7.10) {$X_2$};

\node at (8.60,7.10) {$X_3$};

\node at (0,6) {$x_{11}$};

\node at (0,4.5) {$x_{12}$};

\node at (0,3.2) {$\vdots$};

\node at (0,1.6) {$\vdots$};

\node at (0,-0.1) {$x_{1m}$};

\node at (4,6) {$x_{21}$};

\node at (4,4.5) {$x_{22}$};

\node at (4,3.2) {$\vdots$};

\node at (4,1.5) {$x_{2n}$};

\node at (8,6) {$x_{31}$};

\node at (8,4.0) {$x_{32}$};

\draw[line width=0.04cm,red] (0,6) ellipse (0.85 and 0.6);

\draw[line width=0.04cm,red] (0,3.0) ellipse (0.85 and 0.6);

\draw[line width=0.04cm,red] (0,-0.1) ellipse (0.85 and 0.6);

\draw[line width=0.04cm,red] (-0.75,5) rectangle (0.75,4);

\draw[line width=0.04cm,red] (-0.75,1.9) rectangle (0.75,0.9);

\draw[line width=0.05cm,blue,dashed] (3.25,6.5) rectangle (4.75,5.5);

\draw[line width=0.05cm,blue,dashed] (4,4.5) ellipse (0.85 and 0.6);

\draw[line width=0.05cm,blue,dashed] (3.25,3.50) rectangle (4.75,2.50);

\draw[line width=0.05cm,blue,dashed] (4,1.5) ellipse (0.85 and 0.6);

\draw[line width=0.06cm,orange,dash dot] (8,6) ellipse (0.85 and 0.6);

\draw[line width=0.06cm,orange,dash dot] (7.25,4.5) rectangle (8.75,3.5);

\draw[thick,->] (1,6.0) -- (3,4.7);

\draw[thick,->] (1,4.5) -- (3,6.0);

\draw[thick,->] (1,3.0) -- (3,4.3);

\draw[thick,->] (1,1.5) -- (3,3.00);

\draw[thick,->] (1,0.0) -- (3,1.40);

\draw[thick,->] (5,6.0) -- (7,4.20);

\draw[thick,->] (5,4.5) -- (7,6.250);

\draw[thick,->] (5,3.0) -- (7,3.80);

\draw[thick,->] (5,1.5) -- (7.1,5.65);

\draw[thick] (0,7.75) .. controls (2,8) and (6,8) .. (8,6.75);

\draw[thick,->] (0,7.75) .. controls (-3,7.5) and (-3,4) .. (-1,3);

\draw[thick] (0,-1.5) .. controls (2,-1.5) and (6,-1) .. (8,3.35);

\draw[thick,<-] (-1,1.4) .. controls (-2,1) and (-2,-1.5) .. (0,-1.5);

\end{scope}


\begin{scope}[shift={(15,0)}]


\node at (0.0,7.10) {$X_1$};

\node at (4.0,7.10) {$X_2$};

\node at (8.60,7.10) {$X_3$};

\node at (0,6) {$x_{11}$};

\node at (0,4.5) {$x_{12}$};

\node at (0,3.2) {$\vdots$};

\node at (0,1.6) {$\vdots$};

\node at (0,-0.1) {$x_{1m}$};

\node at (4,6) {$x_{21}$};

\node at (4,4.5) {$x_{22}$};

\node at (4,3.2) {$\vdots$};

\node at (4,1.6) {$\vdots$};

\node at (4,-0.1) {$x_{2n}$};

\node at (8,6) {$x_{31}$};

\node at (8,4.0) {$x_{32}$};

\draw[line width=0.04cm,red] (0,6) ellipse (0.85 and 0.6);

\draw[line width=0.04cm,red] (0,3.0) ellipse (0.85 and 0.6);

\draw[line width=0.04cm,red] (0,1.4) ellipse (0.85 and 0.6);

\draw[line width=0.04cm,red] (-0.75,5) rectangle (0.75,4);

\draw[line width=0.04cm,red] (-0.75,0.4) rectangle (0.75,-0.6);

\draw[line width=0.05cm,blue,dashed] (3.25,6.5) rectangle (4.75,5.5);

\draw[line width=0.05cm,blue,dashed] (4,4.5) ellipse (0.85 and 0.6);

\draw[line width=0.05cm,blue,dashed] (3.25,3.50) rectangle (4.75,2.50);

\draw[line width=0.05cm,blue,dashed] (4,1.4) ellipse (0.85 and 0.6);

\draw[line width=0.05cm,blue,dashed] (3.25,0.40) rectangle (4.75,-0.60);

\draw[line width=0.06cm,orange,dash dot] (8,6) ellipse (0.85 and 0.6);

\draw[line width=0.06cm,orange,dash dot] (7.25,4.5) rectangle (8.75,3.5);

\draw[thick,->] (1,6.0) -- (3,4.7);

\draw[thick,->] (1,4.5) -- (3,6.0);

\draw[thick,->] (1,3.0) -- (3,4.3);

\draw[thick,->] (1,1.5) -- (3,1.50);

\draw[thick,->] (1,0.0) -- (3,0.0);

\draw[thick,->] (5,6.0) -- (7,4.20);

\draw[thick,->] (5,4.5) -- (7,6.250);

\draw[thick,->] (5,3.0) -- (7,3.80);

\draw[thick,->] (5,1.5) -- (7.1,5.65);

\draw[thick,->] (5,0.0) -- (7.1,3.25);

\draw[thick] (0,7.75) .. controls (2,8) and (6,8) .. (8,6.75);

\draw[thick,->] (0,7.75) .. controls (-2.5,7.5) and (-2.5,5) .. (-1,4.5);

\draw[thick] (0,-1.5) .. controls (3,-1.5) and (6.75,-2.25) .. (8,3.35);

\draw[thick,<-] (-1,1.4) .. controls (-2,1) and (-2,-1.5) .. (0,-1.5);

\end{scope}


\end{tikzpicture}
    \caption{Left: Example of a map with two $3$-cycles for sets of sizes $(m,n,2)$. We partition $X_1 \cup X_2 \cup X_3$ into two disjoint sets, one set corresponding to one $3$-cycle (they are circled above) and another set corresponding to the other $3$-cycle (they are boxed above). Right: Example of a map with one $6$-cycle for sets of sizes $(m,n,2)$.}
    \label{fig_00008}
\end{figure}


\bibliographystyle{amsalpha} 
\bibliography{nilpotent_finite}

@article {Lei21,
    AUTHOR = {Leinster, Tom},
     TITLE = {The probability that an operator is nilpotent},
   JOURNAL = {Amer. Math. Monthly},
  FJOURNAL = {American Mathematical Monthly},
    VOLUME = {128},
      YEAR = {2021},
    NUMBER = {4},
     PAGES = {371--375},
}

@article {CIKLR25,
    AUTHOR = {Chen, Weixi and Im, Mee Seong and Khovanov, Mikhail and
              Lillja, Catherine and Rugo, Nicolas},
     TITLE = {Pairs of eventually constant maps and nilpotent pairs},
   JOURNAL = {Lett. Math. Phys.},
  FJOURNAL = {Letters in Mathematical Physics},
    VOLUME = {116},
      YEAR = {2026},
    NUMBER = {4},
     PAGES = {Paper No. 77},
}

@inbook{Cayley09, 
    AUTHOR = {Cayley, Arthur}, 
     TITLE = {A theorem on trees}, 
     PLACE = {Cambridge}, 
    SERIES = {Cambridge Library Collection - Mathematics}, 
 BOOKTITLE = {The Collected Mathematical Papers}, 
 PUBLISHER = {Cambridge University Press}, 
      YEAR = {2009}, 
     PAGES = {26--28}, 
COLLECTION = {Cambridge Library Collection - Mathematics}
}

@article {GHI26_set_quiver,
    AUTHOR = {Green, Radford and Holmes, Cornell and Im, Mee Seong},
     TITLE = {Multisymmetric polynomials on set-theoretic quiver representations},
   JOURNAL = {arXiv preprint \href{https://arxiv.org/abs/2606.16956}{arXiv:2606.16956}},
      YEAR = {2026},
     PAGES = {1--35},
}

@article {CILR25,
    AUTHOR = {Chen, Weixi and Im, Mee Seong and Lillja, Catherine and Rugo, Nicolas},
     TITLE = {Eventually constant maps for two sets and nilpotent pairs},
   JOURNAL = {arXiv preprint \href{https://arxiv.org/abs/2512.05269}{arXiv:2512.05269}, to appear in Contemp. Math.},
      YEAR = {2025},
     PAGES = {1--21},
}

@book {Sta24,
    AUTHOR = {Stanley, Richard P.},
     TITLE = {Enumerative {C}ombinatorics. {V}ol. 2},
    SERIES = {Cambridge Studies in Advanced Mathematics},
    VOLUME = {208},
   EDITION = {Second},
      NOTE = {With an appendix by Sergey Fomin},
 PUBLISHER = {Cambridge University Press, Cambridge},
      YEAR = {2024},
     PAGES = {xvi+783},
}

@book {GKP89,
    AUTHOR = {Graham, Ronald L. and Knuth, Donald E. and Patashnik, Oren},
     TITLE = {Concrete mathematics},
      NOTE = {A foundation for computer science},
 PUBLISHER = {Addison-Wesley Publishing Company, Advanced Book Program,
              Reading, MA},
      YEAR = {1989},
     PAGES = {xiv+625},
}

@book {Die25,
    AUTHOR = {Diestel, Reinhard},
     TITLE = {Graph theory},
    SERIES = {Graduate Texts in Mathematics},
    VOLUME = {173},
   EDITION = {Sixth},
 PUBLISHER = {Springer, Berlin},
      YEAR = {2025},
     PAGES = {xx+454},
}

@book {Wes96,
    AUTHOR = {West, Douglas B.},
     TITLE = {Introduction to graph theory},
 PUBLISHER = {Prentice Hall, Inc., Upper Saddle River, NJ},
      YEAR = {1996},
     PAGES = {xvi+512},
}

@book {HJ13,
    AUTHOR = {Horn, Roger A. and Johnson, Charles R.},
     TITLE = {Matrix analysis},
   EDITION = {Second},
 PUBLISHER = {Cambridge University Press, Cambridge},
      YEAR = {2013},
     PAGES = {xviii+643},
}

@book {GR01,
    AUTHOR = {Godsil, Chris and Royle, Gordon},
     TITLE = {Algebraic graph theory},
    SERIES = {Graduate Texts in Mathematics},
    VOLUME = {207},
 PUBLISHER = {Springer-Verlag, New York},
      YEAR = {2001},
     PAGES = {xx+439},
}

@book {HJ94,
    AUTHOR = {Horn, Roger A. and Johnson, Charles R.},
     TITLE = {Topics in matrix analysis},
      NOTE = {Corrected reprint of the 1991 original},
 PUBLISHER = {Cambridge University Press, Cambridge},
      YEAR = {1994},
     PAGES = {viii+607},
}

@article {FH58,
    AUTHOR = {Fine, Nathan J. and Herstein, Israel N.},
     TITLE = {The probability that a matrix be nilpotent},
   JOURNAL = {Illinois J. Math.},
  FJOURNAL = {Illinois Journal of Mathematics},
    VOLUME = {2},
      YEAR = {1958},
     PAGES = {499--504},
}

@article{Hai10,
      AUTHOR = {Haiman, Mark},  
      TITLE = {Notes on the {M}atrix-{T}ree theorem and {C}ayley's tree enumerator}, 
      JOURNAL = {Combinatorics \href{https://math.berkeley.edu/~mhaiman/math172-spring10/matrixtree.pdf}{https://math.berkeley.edu/$\sim$mhaiman/math172-spring10/matrixtree.pdf}},
      YEAR = {2010},
      PAGES = {1--7},
}

@article {FS58_eng,
    AUTHOR = {Fiedler, Miroslav and Sedl\'{a}\v{c}ek, Ji\v{r}\'i},
     TITLE = {On W-bases of directed graphs ({\"{U}}ber {W}urzelbasen von gerichteten {G}raphen)},
   JOURNAL = {\v{C}asopis P\v{e}st. Mat.},
  FJOURNAL = {\v{C}eskoslovensk\'a{} Akademie V\v{e}d. \v{C}asopis Pro P\v{e}stov\'an\'i\ Matematiky},
    VOLUME = {83},
      YEAR = {1958},
     PAGES = {214-225},
}

@article {Sta11,
    AUTHOR = {Stacey, Andrew},
     TITLE = {Comparative smootheology},
   JOURNAL = {Theory Appl. Categ.},
  FJOURNAL = {Theory and Applications of Categories},
    VOLUME = {25},
      YEAR = {2011},
     PAGES = {No. 4, 64--117},
}

@book {Bol98,
    AUTHOR = {Bollob\'as, B\'ela},
     TITLE = {Modern {G}raph {T}heory},
    SERIES = {Graduate Texts in Mathematics},
    VOLUME = {184},
 PUBLISHER = {Springer-Verlag, New York},
      YEAR = {1998},
     PAGES = {xiv+394},
}

@article {KR,
    AUTHOR = {Khovanov, Mikhail and Robert, Louis-Hadrien},
     TITLE = {Foam evaluation and {K}ronheimer-{M}rowka theories},
   JOURNAL = {Adv. Math.},
  FJOURNAL = {Advances in Mathematics},
    VOLUME = {376},
      YEAR = {2021},
     PAGES = {Paper No. 107433, 59},
}

@article {CK78,
    AUTHOR = {Chaiken, Seth and Kleitman, Daniel J.},
     TITLE = {Matrix tree theorems},
   JOURNAL = {J. Combinatorial Theory Ser. A},
  FJOURNAL = {Journal of Combinatorial Theory. Series A},
    VOLUME = {24},
      YEAR = {1978},
    NUMBER = {3},
     PAGES = {377--381},
      ISSN = {0097-3165},
}

@article {PakPostnikov94,
    AUTHOR = {Pak, Igor and Postnikov, Alexander},
     TITLE = {Enumeration of spanning trees of graphs},
   JOURNAL = {preprint},
      YEAR = {1994},
     PAGES = {1--18},
}

@book {FS09,
    AUTHOR = {Flajolet, Philippe and Sedgewick, Robert},
     TITLE = {Analytic combinatorics},
 PUBLISHER = {Cambridge University Press, Cambridge},
      YEAR = {2009},
     PAGES = {129},
}

@book {Sta12,
    AUTHOR = {Stanley, Richard P.},
     TITLE = {Enumerative combinatorics. {V}olume 1},
    SERIES = {Cambridge Studies in Advanced Mathematics},
    VOLUME = {49},
   EDITION = {Second},
 PUBLISHER = {Cambridge University Press, Cambridge},
      YEAR = {2012},
     PAGES = {xiv+626},
}

\end{document}